\documentclass[11pt]{amsart}
\usepackage{amsfonts}
\usepackage{amsmath,amssymb,latexsym,soul,cite,mathrsfs}
\usepackage{mathtools}
\usepackage{color,enumitem,graphicx}
\usepackage[colorlinks=true,urlcolor=blue,
citecolor=red,linkcolor=blue,linktocpage,pdfpagelabels,
bookmarksnumbered,bookmarksopen]{hyperref}
\usepackage[english]{babel}

\usepackage[left=2.9cm,right=2.9cm,top=2.8cm,bottom=2.8cm]{geometry}
\usepackage[hyperpageref]{backref}

\usepackage[colorinlistoftodos]{todonotes}
\makeatletter
\providecommand\@dotsep{5}
\def\listtodoname{List of Todos}
\def\listoftodos{\@starttoc{tdo}\listtodoname}
\makeatother

\numberwithin{equation}{section}





\makeatletter
\@namedef{subjclassname@2020}{%
	\textup{2020} Mathematics Subject Classification}
\makeatother

\newtheorem{theorem}{Theorem}[section]
\newtheorem{proposition}[theorem]{Proposition}
\newtheorem{lemma}[theorem]{Lemma}
\newtheorem{corollary}[theorem]{Corollary}

\newtheorem{remark}{Remark}
\newtheorem{definition}[theorem]{Definition}

\begin{document}
	
	
	\title [A Generalized Choquard equation with weighted anisotropic Stein-Weiss...]{A Generalized Choquard equation with weighted anisotropic Stein-Weiss potential on nonreflexive Orlicz-Sobolev Spaces}

	\author{Lucas da Silva$^\ast$}
	\author{Marco Souto}

	\address[Lucas da Silva]{\newline\indent Unidade Acad\^emica de Matem\'atica
		\newline\indent{Universidade Federal de Campina Grande,}
		\newline\indent
		58429-970, Campina Grande - PB - Brazil}
	\email{\href{mailto:lucastri09@gmail.com}{ls3@academico.ufpb.br}}
	
	\address[Marco Souto]
	{\newline\indent Unidade Acad\^emica de Matem\'atica
		\newline\indent 
		Universidade Federal de Campina Grande,
		\newline\indent
		58429-970, Campina Grande - PB - Brazil}
	\email{\href{marco@dme.ufcg.edu.br}{marco@dme.ufcg.edu.br}}

	\pretolerance10000
\thanks{M. Souto was partially supported by CNPq/Brazil 309.692/2020-2}
\thanks{$^\ast$Corresponding author}	
	\subjclass[2020]{{Primary: 35J15, 35J62, 35J60}} 
\keywords{{Orlicz-Sobolev spaces; Variational methods; Choquard equation; nonreflexive spaces}}
	
	\begin{abstract}
	\noindent In this paper we investigate the existence of solution for the following nonlocal problem with anisotropic Stein–Weiss convolution term
		\begin{equation*}
	-\Delta_{\Phi}u+V(x)\phi(|u|)u=\dfrac{1}{|x|^\alpha}\left(\int_{\mathbb{R}^{N}} \dfrac{K(y)F(u(y))}{|x-y|^{\lambda}|y|^\alpha}dy\right)K(x)f(u(x)),\;\;x\in \mathbb{R}^{N} 
		\end{equation*}
	where $\alpha\geq 0$, $ N \geq 2$, $\lambda>0$ is a positive parameter, $V,K\in {C}(\mathbb R^N,[0,\infty))$ are nonnegative functions that may vanish at infinity, the function $f\in C  (\mathbb{R}, \mathbb R)$ is quasicritical and $\linebreak F(t)=\int_{0}^{t}f(s)ds$. To establish our existence and regularity results, we use the Hardy-type inequalities for Orlicz-Sobolev Space and the Stein-weiss inequality together with a variational technique based on the mountain pass theorem for a functional that is not necessarily in $C^1$. Furthermore, we also prove the existence of a ground state solution by the method of Nehari manifold in the case where the strict monotonicity condition on $f$ is not required.
	
This work incorporates the case where the $N$-function $\tilde{\Phi }$ does not verify the $\Delta_{2}$-condition.
		%
		%
		%
	\end{abstract}


	\maketitle
	
	\section{Introduction}

This paper concerns the existence of weak solution for the following nonlocal problem with
anisotropic Stein–Weiss convolution term
	\begin{equation*}
\left\{\;
\begin{aligned}
-\Delta_{\Phi}u+V(x)\phi(|u|)u&=\dfrac{1}{|x|^\alpha}\left(\int_{\mathbb{R}^{N}} \dfrac{K(y)F(u(y))}{|x-y|^{\lambda}|y|^\alpha}dy\right)K(x)f(u(x)),\;\;x\in \mathbb{R}^{N} & \\
& u\in D^{1,\Phi}(\mathbb{R}^{N})&
\end{aligned}
\right.
\eqno{(P)}
\end{equation*}
where $\alpha\geq 0$, $ N \geq 2$, $\lambda>0$ is a positive parameter, $V,K\in {C}(\mathbb R^N,[0,\infty))$ are nonnegative functions that may vanish at infinity, the function $f\in C (\mathbb{R}, \mathbb R)$ is quasicritical and $F(t)=\int_{0}^{t}f(s)ds$. It is important to recall that
\begin{equation*}
\Delta_{\Phi}u=\text{div}(\phi(|\nabla u|)\nabla u),
\end{equation*}
where $\Phi:\mathbb{R}\longrightarrow\mathbb{R}$ is a $N$-function of the form
\begin{equation}\label{*}
\Phi(t)=\int_{0}^{|t|}s\phi(s)ds,
\end{equation}  		
and $\phi:(0,\infty)\longrightarrow(0,\infty)$ is a $C^1$ function verifying some assumptions.  This type of problem  driven by a $N$-function $\Phi$ appears in a lot of physical
applications, such as Nonlinear Elasticity, Plasticity, Generalized Newtonian Fluid, Non-Newtonian Fluid and Plasma Physics. The reader can find more details about this subject in \cite{DB}, \cite{Figueiredo}, \cite{FN1} and their references.

Recently, motivated by the applications above, the quasilinear problems driven by a $N$-function $\Phi$ has been frequently studied. We would like to cite [\citenum{FN},\citenum{Bonanno1},\citen{Bonanno2},\citenum{Gossez},\citenum{Li},\citenum{Fuchs},\citenum{PH},\citenum{Cerny}, \citenum{Donald},\citenum{Le},\citenum{Mustonen}] and their references. In all of these works, the so-called $\Delta_{2}$-condition was assumed on $\Phi$ and on $\tilde{\Phi}$ (Complementary function of $\Phi$), which ensures that the Orlicz-Sobolev space $W^{1,\Phi}(\Omega ) $ and $D ^{1,\Phi} (\Omega)$ are reflexive Banach spaces. This assertion is used several times in order to get a nontrivial solution for elliptic problems taking into account the weak topology and the classical variational methods to $C^{1}$ functionals. 

In recent years, problems without the $\Delta_2$-condition of the function $ \tilde{\Phi} $ are being studied. This type of problem brings us many difficulties when we intend to apply variational methods. The first difficulty that can easily be seen is the lack of differentiability of the energy functional associated with the problem, meaning that classical minimax type results cannot be used here. To overcome this difficulty we will use some results involving the energy functional associated with the problem $(P) $ presented by Silva and Souto, see \cite{Lucas}, together with a weaker version of the mountain pass theorem for functionals that are differentiable in Gateaux . Another important difficulty in this paper is that we cannot use the standard analysis because $D^{1,\Phi}(\mathbb{ R}^N)$ might not be reflexive. This difficulty brings us many problems in order to apply variational methods. In order to overcome these difficulties, we consider the weak$^*$ topology recovering some compactness required in variational methods and one of the results involving the weak$^*$ topology is obtained in Lemma \ref{0.1} of this present paper. Some other works can be mentioned here, we refer the interested readers to \cite{Edcarlos} and\cite{AlvesandLeandro}.
%
Based on the papers above, we assume that $\phi:(0,\infty)\longrightarrow(0,\infty)$ is $C^1$ and satisfies the following hypotheses:
$$
t\longmapsto t\phi(t)\;\text{ is increasing for }\;t>0.
\leqno{(\phi_1)}
$$
$$
\displaystyle\lim_{t\rightarrow0^{+}}t\phi(t)=0 \;\;\text{ and }\;\; \displaystyle\lim_{t\rightarrow+\infty}t\phi(t)=+\infty.
\leqno{(\phi_2)}
$$$$
1\leq\ell= \inf_{t>0}{\dfrac{\phi(t)t^{2}}{\Phi(t)}}\leq \sup_{t>0}{\dfrac{\phi(t)t^{2}}{\Phi(t)}}=m<N,\;\;\;\;\ell \leq m < \ell^{*}\;\;\text{ and }\;\; m\neq1.
\leqno{(\phi_3)}
$$$$
t\longmapsto\dfrac{\phi(t)}{t^{m-2}} \text{ is noincreasing for } t > 0.
\leqno{(\phi_4)}
$$

	
When $\alpha=0$, due to the presence of the Choquard type non linearity, the problem $(P)$ is known as a Choquard equation. In that case, to show the existence of solution using variational methods, a tool of the main tool to deal with such type of equations is Hardy-Littlewood-Sobolev  inequality \cite{Loss}. Several works use this approach, we can mention [\citenum{Elieb},\citenum{Lions1},\citenum{Menzala}]

It is clear that there is a physical interpretation for Choquard type of equations, we refer to \cite{Moroz} and survey of such type of equations.

Recently, in \cite{Leandro}, Alves, Rădulescu and Tavares studied the equation $(P)$ with $V=K=1$ and $\alpha=0$ using different assumptions on the $N$-function $\Phi$. In this work, the authors aimed to show that the variational methods could be applied to establish the existence of solutions assuming that the $N$-function $\Phi$ satisfies the conditions $(\phi_{1})-(\phi_{ 3})$ with $\ell>1 $. One of the main difficulties was to prove that the energy functional associated with equation $(P)$ is differentiable. However, good conditions involving the function $f$ made it possible to show the differentiability of the energy functional and consequently allowed to guarantee the existence of a solution through the mountain pass theorem. It is also worth mentioning that in the same work, Alves, Rădulescu and Tavares extended the result to the case where $K=1$ and $V$ is one of the following potentials: periodic function, asymptotic periodic function, coercive or Bartsch-Wang-like potential.

In 2013, Alves and Souto \cite{AlvesandMarco} proved a result of existence of ground state solutions for the following Schr\"odinger equation 
\begin{equation*}
\begin {aligned}
- \Delta u + V (x)  u = K (x) f (u),  \; \mathbb{R}^{N} 
\end{aligned}
\end{equation*}
where $N \geq 3$. They assumed that $V, K : \mathbb{ R}^N \rightarrow \mathbb{ R}$ are continuous functions and satisfy the following conditions:

\noindent{\it$(K_0)$} $V>0$, $K\in L^{\infty}(\mathbb{R}^{N})$ and $K$ is positive almost everywhere.\vspace*{0.2cm}\\
\noindent{($K_1'$)} If $\{A_n\}\subset\mathbb{R}^{N}$ is a sequence of Borelian sets such that $\displaystyle\sup_{n}|A_n|< +\infty$, then
\begin{equation*}
\lim_{r\rightarrow+\infty}\int_{A_n\cap B_r^{c} (0)} K(x) dx=0,\;\text{ uniformly in }n\in\mathbb {N}.
\end{equation*}
 One of the following condition is true:
 
\noindent{($K_2'$)} $\dfrac{K}{V}\in L^{\infty}(\mathbb{ R}^N)$

or 

\noindent{($K_3'$)} $\dfrac{K(x)}{[V(x)]^{\frac{2^*-p}{2^*-2}}}\rightarrow 0$ for some $p \in (2, 2^*)$

Further, Chen and Yuan in \cite{Chen1}, considered the problem:
\begin{align*}
-\Delta u+V(x)\phi(|u|)u=\left(\int_{\mathbb{R}^{N}} \dfrac{K(y)F(u(y))}{ |x-y|^{\lambda}}dy\right)K(x)f(u(x)),\;\;x\in \mathbb{R}^{N}
\end{align*}
with similar conditions imposed by Alves and Souto in \cite{AlvesandMarco}. We emphasize that the authors substitute the conditions ($K_1 '$) and ($K_3'$) by the conditions ($K_1$) and ($K_3''$), respectively. ($K_1$) and ($K_3''$) are as follows:\vspace*{0.2cm}\\
\noindent{($K_1$)} If $\{A_n\}\subset\mathbb{R}^{N}$ is a sequence of Borelian sets such that $\displaystyle\sup_{n}|A_n|< +\infty$, then
\begin{equation*}
\lim_{r\rightarrow+\infty}\int_{A_n\cap B_r^{c} (0)} K(x)^{\frac{2N}{2N-\lambda}} dx=0,\;\text{ uniformly in }n\in\mathbb {N}.
\end{equation*}
\noindent{($K_3''$)} $\dfrac{K(x)^{\frac{2N}{2N-\lambda}}}{[V(x)]^{\frac{2^*-p}{2^*-2}}}\rightarrow 0$ for some $p \in (2, 2^*)$

The previous equation is a particular case of $(P)$ when $\alpha=0$.

A recent research has been done regarding the case where $\alpha$ is not necessarily $0$. It is worth mentioning the beautiful work of Du et al. \cite{Du}, where they investigated the following equation,
\begin{align*}
-\Delta u=\dfrac{1}{|x|^\alpha}\left(\int_{\mathbb{R}^{N}} \dfrac{|u(y)|^{2^* _{\alpha,\mu}}}{|x-y|^{\mu}|y|^\alpha}dy\right)|u(x)|^{2^* _{\alpha,\mu}-2}u,\;\;x\in \mathbb{R}^{N}
\end{align*}
where the critical exponent $2^* _{\alpha,\mu}=\frac{(2N-2\alpha-\mu)}{(N-2)}$. Other authors also presented results with anisotropic Stein–Weiss convolution term, we
refer to Alves and Shen \cite{Shen}, Zhang, Tang and
R\u{a}dulescu \cite{Zhang}, and the references therein.

A fundamental tool for studying problems with anisotropic Stein–Weiss convolution term is the Stein–Weiss inequality \cite{Stein}, that is the extension of the Hardy-Littlewood-Sobolev inequality.
\begin{proposition}\label{Hardy}[Stein–Weiss inequality] Set $t,r>1$, $\lambda\in (0,N)$ $\alpha+\beta\geq 0$ and $\alpha+\beta+\lambda\leq N$. If $1/t+1/r +(\lambda+\alpha+\beta)/N =2$ and $1-1/t-\lambda/N<\alpha/N<1-1/t$. Then there exists a constant $C_0=C(t,r,\alpha,\beta,N,\lambda)$ such that
	\begin{align}
	\left|\int_{\mathbb{ R}^{N}}\int_{\mathbb{ R}^{N}} \dfrac{g_1(x)g_2(y)}{|x|^\alpha|x-y|^{\lambda}|y|^\beta}dxdy\right|\leq C_0\lVert g_1\lVert_{ L^{t}(\mathbb{ R}^{N})}\lVert g_{2}\lVert_{ L^{t}(\mathbb{ R}^{N})}.
	\end{align}
	for all $g_1\in L^{r}(\mathbb{ R}^N)$ and $g_2\in L^{t}(\mathbb{ R}^N)$,  where $C_0$ is independent of $g_1$, $g_2$. For $\alpha = \beta = 0$, it is reduced to the Hartree type (also called the Choquard type) nonlinearity, which
	is driven by the classical Hardy-Littlewood-Sobolev inequality (See \cite{Loss}).

\end{proposition}

To the best of our knowledge, there seems to be no results on ground state solutions for the equation $(P)$ with $V$ vanishing at infinity and $\tilde{\Phi }$ verifying or not the $\Delta_{ 2}$-condition. In view of this, driven by the importance of Choquard's equations and the problems in non-reflexive spaces mentioned in the works above, in particular \cite{Lucas}, \cite{Chen1}, the first objective of this article is to guarantee the existence of a ground state solution for the equation $(P)$ and, after that, to prove regularity results. In this work we will assume that $0\leq\alpha<\lambda$ and $\lambda+2\alpha\in (0,N)\cap (0,2N- \frac{2N}{m})$. Furthermore, throughout the text we consider $s=\frac{2N}{2N-2\alpha-\lambda}$. To state the first results, we introduce the following hypotheses about the potentials $V$ and $K$:\vspace*{0.2cm}\\
\noindent{\it$(K_0)$} $V>0$, $K\in L^{\infty}(\mathbb{R}^{N})$ and $K$ is positive almost everywhere.\vspace*{0.2cm}\\
\noindent{(I)} If $\{A_n\}\subset\mathbb{R}^{N}$ is a sequence of Borelian sets such that $\displaystyle\sup_{n}|A_n|< +\infty$, then
\begin{equation*}
\lim_{r\rightarrow+\infty}\int_{A_n\cap B_r^{c} (0)} K(x)^s dx=0,\;\text{ uniformly in }n\in\mathbb {N}.
\eqno{(K_1)}
\end{equation*}
\noindent{(III)} One of the below conditions occurs:
\begin{equation*}
\dfrac{K}{V}\in L^{\infty}(\mathbb{ R}^N)
 \eqno{(K_2)}
\end{equation*}
 or there are $b_1,b_2 \in (m,\ell^*)$ and a $N$-function $B(t)=\int_{0}^{|t|}b(\tau)\tau d\tau$ verifying the following properties:
\begin{equation*}
t\longmapsto tb(t)\;\text{ is increasing for }\;t>0.
\leqno{\;\;\;\;(B_1)}
\end{equation*}
\begin{equation*}
\displaystyle\lim_{t\rightarrow0^{+}}tb(t)=0 \;\;\text{ and }\;\; \displaystyle\lim_{t\rightarrow+\infty}tb(t)=+\infty.
\leqno{\;\;\;\;(B_2)}
\end{equation*}
\begin{equation*}
	 b_1\leq \dfrac{b(t)t^2}{B(t)}\leq b_2,\; \text{ for all }\; t>0
	 \leqno{\;\;\;\;(B_3)}
\end{equation*}
\begin{equation*}
\text{ The function } B(|t|^{1/s}) \text{  is convex in }\mathbb{ R}
\leqno{\;\;\;\;(B_4)}
\end{equation*}
\noindent and 
	\begin{equation*}
\dfrac{K(x)^s}{H(x)}\longrightarrow 0 \;\text{ as } \;|x|\rightarrow+\infty
 \eqno{(K_3)}
\end{equation*}
where $\displaystyle H(x)=\min_{\tau>0} \left\{V(x)\dfrac{\Phi(\tau)}{B(\tau)}+\dfrac{\Phi_{*}(\tau)}{B(\tau)}\right\}$.

Hereafter, we say that $(V, K) \in \mathcal{K}_1$ if (I), (II) and $(K_2)$ hold. When (I), (II) and $(K_3)$ hold, then we say that $(V, K) \in \mathcal{K}_2$. These hypotheses were inspired by the paper \cite{AlvesandMarco} which deals with a local case involving the Laplacian and which were generalized in \cite{Lucas}.
In a first moment, we study the equation $(P)$ assuming that $(V, K) \in \mathcal{K}_1$. To this end, we must assume some conditions on $f$. 

We will consider $ A:\mathbb{ R} \longrightarrow [0,+\infty)$ and $ Z:\mathbb{ R} \longrightarrow [0,+\infty)$ $N$-functions given by $A(w)=\int_{0}^{|w|}ta(t )dt$ and $Z(w)=\int_{0}^{|w|}tz(t )dt$  where $ a:(0,+\infty)\longrightarrow(0,+\infty)$ and $\linebreak z:(0,+\infty)\longrightarrow(0,+\infty)$ are functions satisfying the following conditions:
\begin{equation*}
t\longmapsto ta(t)\;\text{ is increasing for }\;t>0\;\text{ and }\;t\longmapsto tz(t)\;\text{ is increasing for }\;t>0.
\leqno{(A_1)}
\end{equation*}
\begin{equation*}
\displaystyle\lim_{t\rightarrow0^{+}}ta(t)=0,\;\; \displaystyle\lim_{t\rightarrow+\infty}ta(t)=+\infty \;\text{ and }\;\displaystyle\lim_{t\rightarrow0^{+}}tz(t)=0,\;\; \displaystyle\lim_{t\rightarrow+\infty}tz(t)=+\infty.
\leqno{(A_2)}
\end{equation*}
\noindent{\it$(A_3)$} 
There exist $a_1,a_2,z_1,z_2  \in [m,\ell^*]$ with $a_1\leq a_2\leq z_1\leq z_2$ such that
\begin{equation}\label{1.1}
a_1\leq \dfrac{a(t)t^2}{A(t)}\leq a_2,\;\;\forall t>0.\vspace*{-0.2cm}
\end{equation}  
and
\begin{equation}\label{1.1'}
z_1=\inf_{t>0}\dfrac{z(t)t^2}{Z(t)} \;\;\text{ and }\;\; z_2\geq\sup_{t>0}\dfrac{z(t)t^2}{Z(t)}.\vspace*{-0.2cm}
\end{equation}
\noindent{\it$(A_4)$}  The functions $A(|t|^{1/s})$ and $Z(|t|^{1/s})$ are convex in $\mathbb{ R}$.

We assume that $f : \mathbb{ R} \rightarrow \mathbb{ R}$ is continuous and satisfies the following conditions:
%
\vspace*{0.1cm}\\
\noindent{\it$(f_1)$} 
$
\;\;\;\displaystyle \limsup_{t \to 0} \dfrac{f(t)}{\big(a(|t|)|t|^{2-s}\big)^{1/s}}=0\text{ and }\quad \displaystyle \lim_{t \to +\infty} \dfrac{f(t)}{\big(z(|t|)|t|^{2-s}\big)^{1/s}}=0.$
\vspace*{0.1cm}\\
\noindent{\it$(f_2)$} $t^{1-m/2}f(t)$ is nondecreasing on $(0,+\infty)$.\vspace*{0.1cm}\\
\noindent{\it$(f_3)$} $f(t)\geq 0$ for $t\geq 0$ and $f(t)=0$ for $t\leq 0$.\vspace*{0.1cm}\\
\noindent{\it$(f_4)$}  $\displaystyle \lim_{|t|\rightarrow\infty}\dfrac{F(t)}{|t|^{\frac{m}{2}}}=+\infty.$
\vspace*{0.1cm}\\

These conditions generalize the problem developed by Chen and Yuan in \cite{Chen1}. Assuming the conditions above, our first main result can be stated as follows.
\begin{theorem}\label{Teo1}
	Assume that $\Phi$ satisfies $(\phi_{1})- (\phi_{4})$, $0\leq\alpha<\lambda$ and $\lambda+2\alpha\in (0,N)\cap (0,2N- \frac{2N}{m})$. Suppose that $(V, K) \in \mathcal{K}_1$, $(A_1)-(A_4)$ and $(f_1)$, $(f_{2})$, $(f_{3})$, $(f_{4})$ holds. Then, problem $(P)$ possesses a nonnegative ground state solution. If $2\alpha+\lambda<2\ell$, then the nonnegative solutions are locally bounded.
\end{theorem}
	
To study the regularity of the solutions provided by Theorem \ref{Teo1}, we add the following assumptions:

\noindent{\it$(\phi_5)$} There are $0<\delta<1$, $C_1, C_2>0$ and $1<\beta\leq\ell^*$ such that 
\begin{equation*}
C_1t^{\beta -1}\leq t\phi(t)\leq C_2t^{\beta -1},\;\text{ for }\;t\in[0,\delta].
\end{equation*}
	\noindent{\it$(\phi_6)$} There are constants $\delta_0>0$ and $\delta_1>0$ such that 
\begin{equation*}
\delta_0\leq\dfrac{(\phi(t)t)'}{\phi(t)}\leq \delta_1\;\text{ for}\;t>0.
\end{equation*}

We are in position to state the following regularity result:

	\begin{theorem}\label{Teo1.1} Supoose that $\Phi$ satisfies $(\phi_{5})- (\phi_{6})$.
	Under the assumptions of Theorem \ref{Teo1}, if $K\in L^{1}(\mathbb{ R}^N)$ and $2\alpha+\lambda<2\ell$, then the
	 problem $(P)$ possesses a $C^{1,\alpha}_{loc}(\mathbb{R}^N)$ positive ground state solution.
\end{theorem}

Still regarding equation $(P)$, to study the case where $(V,K)\in \mathcal{K}_2$ we must assume that $f$ satisfies the conditions $(f_2)$, $(f_3 )$, $(f_4)$ and
   \vspace*{0.1cm}\\
   \noindent{\it$(f_5)$} 
   $
   \;\;\;\displaystyle \limsup_{t \to 0} \dfrac{f(t)}{\big(\frac{1}{s}b(|t|)|t|^{2-s}\big)^{1/s}}<\infty\quad\text{ and }\quad \displaystyle \lim_{t \to +\infty} \dfrac{f(t)}{\big(\frac{1}{s}\phi_*(|t|)|t|^{2-s}\big)^{1/s}}=0.$
      \vspace*{0.1cm}\\
Under these conditions, the next result of the existence of a nonnegative solution has the following statement:

\begin{theorem}\label{Teo2}
		Assume that $\Phi$ satisfies $(\phi_{1})- (\phi_{4})$, $0\leq\alpha<\lambda$ and $\lambda+2\alpha\in (0,N)\cap (0,2N- \frac{2N}{m})$. Suppose that $(V, K) \in \mathcal{K}_2$, $(B_1)-(B_4)$ and $(f_2)$, $(f_{3})$, $(f_{4})$, $(f_{5})$ hold. If $\Phi_{*}(|t|^{1/s})$ is convex in $\mathbb{ R}$, then the problem $(P)$ possesses a nonnegative ground state solution. 
\end{theorem}

\section{Basics On Orlicz-Sobolev Spaces}
In this section we recall some properties of Orlicz and Orlicz-Sobolev spaces, which can be found in [\citenum{Adms},\citenum{FN},\citenum{Rao},\citenum{Tienari}]. First of all, we recall that a continuous function $\Phi:\mathbb{R}\rightarrow [0,+\infty)$ is a $N$-function if:
\begin{itemize}
	\item[(i)] $\Phi$ is convex;
	\item[(ii)] $\Phi(t)=0\Leftrightarrow t=0$;
	\item[(iii)] $\Phi$ is even;
	\item[(iv)] $\displaystyle\lim_{t\rightarrow 0}\dfrac{\Phi(t)}{t}=0\text{ and }\lim_{t\rightarrow +\infty}\dfrac{\Phi(t)}{t}=+\infty$.
\end{itemize}\vspace*{0.2cm}

We say that a $N$-function $\Phi$ verifies the $\Delta_2$-condition, and we denote by $\Phi\in(\Delta_2)$, if there are constants $K>0,\; t_{0}>0$ such that $$\Phi(2t)\leq K\Phi(t),~~\forall t \geq t_0.$$ In the case of $|\Omega|=+\infty$, we will consider that $\Phi\in(\Delta_2)$ if $t_0=0$. For instance, it can be shown that $\Phi(t)=|t|^p /p$ for $p > 1$ satisfies the $\Delta_{2}$-condition, while $\Phi(t)=(e^{t^2}-1)/2$. 

If $\Omega$ is an open set of $\mathbb{R}^N$, where $N$ can be a natural number such that $N\geq 1$, and $\Phi$ a $N$-function then define the Orlicz space associated with $\Phi$ as $$L^{\Phi}(\Omega)=\left\{u\in L^1_{\text{loc}}(\Omega):~\int_\Omega \Phi\left(\frac{|u|}{\lambda}\right) dx<+\infty~\text{for some}~\lambda>0\right\}.$$ The space $L^{\Phi}(\Omega)$ is a Banach space endowed with the Luxemburg norm given by $$\|u\|_{L^{\Phi}(\Omega)}=\inf\left\{\lambda>0:\int_\Omega \Phi\left(\frac{|u|}{\lambda}\right) dx\leq 1\right\}.$$ In the case that $\Phi$ verifies $\Delta_2$-condition we have $$L^{\Phi}(\Omega)=\left\{u\in L^1_{\text{loc}}(\Omega):~\int_\Omega \Phi(|u|) dx<+\infty\right\}.$$
The complementary function $\tilde{\Phi}$ associated with $\Phi$ is given by the Legendre transformation, that is, $$\tilde{\Phi}(s)=\max_{t\geq 0}\{st-\Phi(t)\},~~\forall\; t\geq 0.$$
The functions $\Phi$ and $\tilde{\Phi}$ are complementary to each other and satisfy the inequality below $$ \tilde{\Phi}(\Phi'(t))\leq \Phi(2t),\;\;\forall\; t>0.$$ Moreover, we also have a Young type inequality given by $$st\leq \Phi(t)+\tilde{\Phi}(s),~~~\forall s,t\geq 0.$$ Using the above inequality, it is possible to establish the following Holder type inequality: $$\left|\int_{\Omega}uvdx\right|\leq 2\|u\|_{L^{\Phi}(\Omega)}\|v\|_{{L^{\tilde{\Phi}}(\Omega)}},~~\text{for all}~~u\in L^{\Phi}(\Omega)~~ \text{and}~~ v\in L^{\tilde{\Phi}}(\Omega).$$

If $|\Omega|<\infty$, the space $E^{\Phi}(\Omega)$ denotes the closing of $L^{\infty}(\Omega)$ in $L^{\Phi}( \Omega)$ with respect to the norm $\lVert \cdot\lVert_{\Phi}.$ When $|\Omega|=\infty$, the space $E^{\Phi}(\Omega)$ denotes the closure of $C^{\infty}_{0}(\Omega)$ in $L^{\Phi}(\Omega)$ with respect to norm $\lVert \cdot\lVert_{\Phi}.$ In any of these cases, $L^{\Phi}(\Omega)$ is the dual space of $E^{\tilde{\Phi}}(\Omega)$, while $L^{\tilde{\Phi}}( \Omega)$ is the dual space of $E^{{\Phi}}(\Omega)$.  Moreover, $E^{{\Phi}}(\Omega)$ and $E^{\tilde{\Phi}}(\Omega)$ are separable and all continuous functional $M:E^{{\Phi} }(\Omega)\longrightarrow\mathbb{R}$ are of the form
\begin{align*}
M(v)=\int_{\Omega} v(x)g(x)dx,\;\;\;\;\text{for some function}\;\; g\in L^{\tilde{\Phi}}(\Omega).
\end{align*}

Another important function related to function $\Phi$ it is the Sobolev conjugate function ${\Phi}_*$
of $\Phi$ defined by
\begin{align*}
\Phi_{*}^{-1}(t)=\int_{0}^{t}\dfrac{\Phi^{-1}(s)}{s^{(N+1)/N}}ds\;\;\text{for}\;\;t>0\;\;\text{ when }\;\;\int_{1}^{+\infty}\dfrac{\Phi^{-1}(s)}{s^{(N+1)/N}}ds=+\infty.
\end{align*}

\begin{lemma}\label{0.3}
	Consider $\Phi$ a $N$-function of the form \eqref{*} and satisfying $(\phi_1)$ and $(\phi_2)$. Set $$\xi_0(t)=\min\{t^{\ell},t^{m}\}~~\text{and}~~\xi_1(t)=\max\{t^{\ell},t^{m}\},~~\forall t\geq 0.$$ Then $\Phi$ satisfies $$\xi_0(t)\Phi(\rho)\leq\Phi(\rho t)\leq\xi_1(t)\Phi(\rho),~~\forall \rho,t\geq0$$ and $$
	\xi_0(\|u\|_{\Phi})\leq \int_\Omega \Phi(u)dx\leq \xi_1(\|u\|_{\Phi}), \;\;\;\forall u \in L^{\Phi}(\Omega).$$
\end{lemma}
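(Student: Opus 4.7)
The plan is to reduce both claims to the pointwise scaling relation $\xi_0(t)\Phi(\rho)\leq \Phi(\rho t)\leq \xi_1(t)\Phi(\rho)$, and to derive that relation from the two-sided bound $\ell\Phi(t)\leq t\Phi'(t)\leq m\Phi(t)$ that follows from the definition of $\ell,m$ in $(\phi_3)$ together with the identity $\Phi'(t)=t\phi(t)$ built into \eqref{*}.

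First, I would observe that $\ell\Phi(t)\leq t\Phi'(t)$ gives $\frac{d}{dt}\bigl(\Phi(t)/t^{\ell}\bigr)=t^{-\ell-1}(t\Phi'(t)-\ell\Phi(t))\geq 0$, so $t\mapsto \Phi(t)/t^{\ell}$ is non-decreasing on $(0,\infty)$; the symmetric computation using $t\Phi'(t)\leq m\Phi(t)$ shows $t\mapsto \Phi(t)/t^{m}$ is non-increasing. These two monotonicities are the workhorses. For the pointwise estimate, fix $\rho,t>0$ (the boundary cases are trivial from $\Phi(0)=0$). If $t\geq 1$, then $\rho\leq \rho t$, so monotonicity of $\Phi(\sigma)/\sigma^{\ell}$ yields $\Phi(\rho t)\geq t^{\ell}\Phi(\rho)=\xi_{0}(t)\Phi(\rho)$, while monotonicity of $\Phi(\sigma)/\sigma^{m}$ gives the matching upper bound $\Phi(\rho t)\leq t^{m}\Phi(\rho)=\xi_{1}(t)\Phi(\rho)$. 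For $0<t<1$ the same monotonicities applied in the opposite direction swap the roles of $t^{\ell}$ and $t^{m}$, and one again obtains $\xi_{0}(t)\Phi(\rho)\leq \Phi(\rho t)\leq \xi_{1}(t)\Phi(\rho)$.

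For the second (norm) inequality, fix $u\neq 0$ in $L^{\Phi}(\Omega)$ and set $k=\|u\|_{\Phi}$. Because the implicit upper bound $m<\infty$ forces $\Phi(2t)\leq 2^{m}\Phi(t)$, the $N$-function $\Phi$ satisfies the $\Delta_{2}$-condition, so the infimum defining the Luxemburg norm is attained, giving $\int_{\Omega}\Phi(u/k)\,dx=1$. Applying the pointwise estimate with the argument $u/k$ and scaling factor $k$ yields, almost everywhere, $\xi_{0}(k)\Phi(u/k)\leq \Phi(u)\leq \xi_{1}(k)\Phi(u/k)$. Integrating and substituting $\int\Phi(u/k)\,dx=1$ produces precisely $\xi_{0}(\|u\|_{\Phi})\leq \int_{\Omega}\Phi(u)\,dx\leq \xi_{1}(\|u\|_{\Phi})$.

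I do not anticipate any genuine obstacle; the argument is essentially bookkeeping once one reads off the log-derivative bound $\ell/t\leq \Phi'(t)/\Phi(t)\leq m/t$. The only spot requiring mild care is the identity $\int_{\Omega}\Phi(u/\|u\|_{\Phi})\,dx=1$, which relies on the $\Delta_{2}$-property coming from finiteness of $m$; without it one would only secure $\leq 1$ and the lower bound in the norm inequality would demand a separate approximation argument.
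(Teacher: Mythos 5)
Your argument is correct and is essentially the standard proof of this lemma: the paper states it without proof, citing Fukagai--Narukawa-type references, where the same bound $\ell\Phi(t)\leq t\Phi'(t)\leq m\Phi(t)$ (from $(\phi_3)$, which is indeed what supplies $\ell$ and $m$) is integrated/turned into monotonicity of $\Phi(t)/t^{\ell}$ and $\Phi(t)/t^{m}$, and the modular identity $\int_\Omega\Phi(u/\|u\|_{\Phi})\,dx=1$ is then used for the second inequality. Your remark that this identity needs the global $\Delta_2$-property furnished by $\xi_1(t)=t^{m}$ (without which one only gets $\leq 1$, losing the lower bound) is exactly the right point of care, so the proposal stands as written.
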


\begin{lemma}\label{0.5}
	If $\Phi$ is an N-function of the form \eqref{*} satisfying $(\phi_1)$, $(\phi_2)$ and $(\phi_{3})$, then
	\begin{equation}\label{0.22}
	\xi_2 (t) \Phi_*(\rho)\leq \Phi_*(\rho t) \leq \xi_{3}(t)\Phi_*(\rho),\;\;\; \forall\rho, t >0
	\end{equation}
	and
	\begin{align*}
	\xi_{2}(\Arrowvert u \Arrowvert_{\Phi_*}) \leq \int_{\Omega} \Phi_*(u)dx \leq \xi_{3} (\Arrowvert u\Arrowvert_{\Phi_* }), \;\;\;\forall u \in L^{\Phi_*}(\Omega),
	\end{align*}
	where $$ \xi_2 (t) = \min \{t^{\ell^*} , t^{m^*}\} \hspace{0.1cm} \text{ and } \hspace{0.1cm} \xi_{3}(t) = \max\{t^{\ell^*}, t^{m^*}\}, \;t \geq 0.$$
\end{lemma}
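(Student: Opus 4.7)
The plan is to reduce both inequalities to a single ratio estimate for $\Phi_*$, namely the analogue of $(\phi_3)$
$$\ell^*\;\leq\;\frac{t\,\Phi_*'(t)}{\Phi_*(t)}\;\leq\;m^*\qquad\text{for every }t>0.$$
Once this is available, fix $\rho>0$ and set $g(t):=\log\Phi_*(\rho t)$; then $tg'(t)\in[\ell^*,m^*]$, so integrating $\ell^*/t\leq g'(t)\leq m^*/t$ from $1$ to $t$ (treating $t\geq 1$ and $t\leq 1$ separately) yields the pointwise comparison $\xi_2(t)\Phi_*(\rho)\leq\Phi_*(\rho t)\leq\xi_3(t)\Phi_*(\rho)$. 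The norm inequality then follows by taking $\rho=\|u\|_{\Phi_*}$ and invoking the identity $\int_\Omega\Phi_*(u/\|u\|_{\Phi_*})\,dx=1$, which is valid because the ratio estimate itself forces $\Phi_*(2t)\leq 2^{m^*}\Phi_*(t)$ and hence $\Phi_*\in\Delta_2$.

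To obtain the ratio estimate, I would differentiate the defining formula $\Phi_*^{-1}(u)=\int_0^u\Phi^{-1}(s)\,s^{-(N+1)/N}\,ds$ to get $(\Phi_*^{-1})'(u)=\Phi^{-1}(u)/u^{(N+1)/N}$, which upon setting $\sigma:=\Phi^{-1}(\Phi_*(t))$ gives
$$\frac{t\,\Phi_*'(t)}{\Phi_*(t)}\;=\;\frac{t\,\Phi(\sigma)^{1/N}}{\sigma}.$$
The change of variables $s=\Phi(\tau)$, $ds=\phi(\tau)\tau\,d\tau$ rewrites $t=\Phi_*^{-1}(\Phi_*(t))$ as
$$t=\int_0^{\sigma}\frac{\phi(\tau)\tau^2}{\Phi(\tau)^{(N+1)/N}}\,d\tau.$$
Combining the bound $\ell\Phi(\tau)\leq\phi(\tau)\tau^2\leq m\Phi(\tau)$ from $(\phi_3)$ with the Lemma \ref{0.3} estimates $(\tau/\sigma)^m\Phi(\sigma)\leq\Phi(\tau)\leq(\tau/\sigma)^\ell\Phi(\sigma)$ valid on $[0,\sigma]$ reduces the task to the explicit integrals $\int_0^\sigma(\tau/\sigma)^{-\ell/N}\,d\tau=N\sigma/(N-\ell)$ and $\int_0^\sigma(\tau/\sigma)^{-m/N}\,d\tau=N\sigma/(N-m)$, both convergent thanks to $m<N$. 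Pairing the correct endpoints gives
$$\ell^*\sigma\Phi(\sigma)^{-1/N}\;\leq\;t\;\leq\;m^*\sigma\Phi(\sigma)^{-1/N},$$
which after multiplication by $\Phi(\sigma)^{1/N}/\sigma$ is exactly the ratio estimate.

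The main obstacle is the final pairing: the $(\phi_3)$ and Lemma \ref{0.3} inequalities must be matched in the correct direction so that the endpoints collapse precisely to $\ell^*=\ell N/(N-\ell)$ and $m^*=mN/(N-m)$; any other matching produces strictly weaker constants. Besides this, the argument is a direct repetition of the proof of Lemma \ref{0.3} at the level of $\Phi_*$, and in fact once the ratio estimate is in place one could alternatively invoke Lemma \ref{0.3} itself with $\Phi_*$ in place of $\Phi$ and $(\ell^*,m^*)$ in place of $(\ell,m)$ to obtain both assertions simultaneously.
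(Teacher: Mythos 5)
The paper itself gives no proof of Lemma \ref{0.5}: it is recalled from the Orlicz--Sobolev literature (Fukagai--Narukawa \cite{FN}), and the unnumbered lemma that follows it records exactly the equivalence of \eqref{0.22} with the ratio bound $\ell^*\leq t\Phi_*'(t)/\Phi_*(t)\leq m^*$ on which you base your argument. Your proof is correct and is essentially that standard argument: the identity $(\Phi_*^{-1})'(u)=\Phi^{-1}(u)\,u^{-(N+1)/N}$, the substitution $s=\Phi(\tau)$, and the pairing of the $(\phi_3)$ bounds with the Lemma \ref{0.3} estimates on $[0,\sigma]$ (lower bound of $t$ via $\ell$ and $\Phi(\tau)\leq(\tau/\sigma)^{\ell}\Phi(\sigma)$, upper bound via $m$ and $\Phi(\tau)\geq(\tau/\sigma)^{m}\Phi(\sigma)$) do collapse exactly to $\ell^*=\ell N/(N-\ell)$ and $m^*=mN/(N-m)$, and integrating $\frac{d}{dt}\log\Phi_*(\rho t)$ then yields \eqref{0.22}. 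The passage to the modular inequality is also sound: \eqref{0.22} with $t=2$ gives the global doubling $\Phi_*(2t)\leq 2^{m^*}\Phi_*(t)$, hence $\Phi_*\in(\Delta_2)$ and $\int_\Omega\Phi_*\big(|u|/\|u\|_{\Phi_*}\big)dx=1$ for $u\neq0$, after which \eqref{0.22} applied pointwise with $\rho=|u(x)|/\|u\|_{\Phi_*}$ and $t=\|u\|_{\Phi_*}$ gives the second display, the case $u=0$ being trivial.
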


\begin{lemma}\label{}
	\eqref{0.22} is equivalent to
	\begin{equation*}
	{\ell^*}\leq \dfrac{\Phi'_*(t)t^2}{\Phi_*(t)} \leq {m^*},\;\;\; \forall t >0
	\end{equation*}
\end{lemma}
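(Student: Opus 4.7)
The statement asserts an equivalence between a two-sided ``power-type'' growth bound on the $N$-function $\Phi_*$ and a two-sided bound on its logarithmic derivative; this is the standard characterization that, in the notation of the paper, already underlies condition $(\phi_3)$ for $\Phi$ itself. Reading $\dfrac{\Phi'_*(t)t^2}{\Phi_*(t)}$ as $\dfrac{t\Phi'_*(t)}{\Phi_*(t)}$ (the analogue of $\dfrac{\phi(t)t^2}{\Phi(t)} = \dfrac{t\Phi'(t)}{\Phi(t)}$ from $(\phi_3)$), the plan is to prove both implications by standard calculus, exploiting that $\Phi_*$ is $C^1$ on $(0,\infty)$ (this follows from the integral definition of $\Phi_*^{-1}$ together with convexity).

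\textbf{From \eqref{0.22} to the derivative bound.} Fix $\rho>0$ and take $t>1$, so $\xi_2(t)=t^{\ell^*}$ and $\xi_3(t)=t^{m^*}$. The upper inequality in \eqref{0.22} rearranges to
\begin{equation*}
\frac{\Phi_*(\rho t)-\Phi_*(\rho)}{\rho t-\rho}\cdot\rho \;\leq\; \Phi_*(\rho)\,\frac{t^{m^*}-1}{t-1}.
\end{equation*}
Letting $t\to 1^+$ the left-hand side tends to $\rho\Phi_*'(\rho)$ and the right-hand side to $m^*\Phi_*(\rho)$, giving $t\Phi_*'(t)/\Phi_*(t)\le m^*$. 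The lower inequality in \eqref{0.22} handled the same way (using $(t^{\ell^*}-1)/(t-1)\to \ell^*$) yields $t\Phi_*'(t)/\Phi_*(t)\geq \ell^*$. The case $t\to 1^-$ produces the same bounds.

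\textbf{From the derivative bound to \eqref{0.22}.} Set $g_\alpha(t):=\Phi_*(t)/t^\alpha$. A direct computation gives $g_\alpha'(t)=(t\Phi'_*(t)-\alpha\Phi_*(t))/t^{\alpha+1}$. The hypothesized bounds therefore make $g_{\ell^*}$ nondecreasing and $g_{m^*}$ nonincreasing on $(0,\infty)$. For $\rho>0$ and $t\geq 1$, monotonicity of $g_{m^*}$ at $\rho$ and $\rho t$ yields $\Phi_*(\rho t)\leq t^{m^*}\Phi_*(\rho)$, while monotonicity of $g_{\ell^*}$ yields $\Phi_*(\rho t)\geq t^{\ell^*}\Phi_*(\rho)$; for $0<t\leq 1$ the roles of $t^{\ell^*}$ and $t^{m^*}$ swap. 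In both regimes we obtain $\xi_2(t)\Phi_*(\rho)\leq \Phi_*(\rho t)\leq \xi_3(t)\Phi_*(\rho)$, which is precisely \eqref{0.22}.

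\textbf{Main obstacle.} The only nontrivial point is justifying the pointwise differentiability used in the first direction. Because $\Phi_*$ is defined via $\Phi_*^{-1}(t)=\int_0^{t} \Phi^{-1}(s)/s^{(N+1)/N}\,ds$ with continuous, strictly positive integrand on $(0,\infty)$, the inverse function theorem shows $\Phi_*$ is $C^1$ on $(0,\infty)$, so the difference quotients genuinely converge to $\Phi'_*(\rho)$. Once this regularity is in hand, both directions reduce to the one-line monotonicity arguments above and no further technicalities arise.
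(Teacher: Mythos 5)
Your proof is correct. The paper itself offers no proof of this lemma — it is simply recalled from the standard Orlicz–Sobolev literature (e.g. Fukagai–Narukawa) — and your argument is exactly the standard one: difference quotients as $t\to 1$ to pass from \eqref{0.22} to the index bounds, monotonicity of $\Phi_*(t)/t^{\ell^*}$ and $\Phi_*(t)/t^{m^*}$ for the converse, and the $C^1$ regularity of $\Phi_*$ on $(0,\infty)$ obtained from its defining integral; your reading of $\dfrac{\Phi_*'(t)t^{2}}{\Phi_*(t)}$ as $\dfrac{t\,\Phi_*'(t)}{\Phi_*(t)}=\dfrac{\phi_*(t)t^{2}}{\Phi_*(t)}$ is the intended one, consistent with $(\phi_3)$.
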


\begin{lemma}\label{0.51}
	If $\Phi$ is an N-function of the form \eqref{*} satisfying $(\phi_1)$, $(\phi_2)$ and $(\phi_{3})$, then
	\begin{equation*}
	\tilde{\Phi } (\rho t) \leq t^{\frac{m}{m-1}}\tilde{\Phi }(\rho),\; \text{ for all }\;\rho>0\;\text{ and }\; 0\leq t<1.
	\end{equation*}
\end{lemma}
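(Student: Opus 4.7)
The plan is to read the lemma as the dual of Lemma \ref{0.3}: since $(\phi_3)$ controls $\tfrac{t\Phi'(t)}{\Phi(t)}$ from above by $m$, the Legendre duality will translate this into a lower bound of $\tfrac{m}{m-1}$ for the analogous ratio associated with $\tilde{\Phi}$, and integrating that differential inequality on $[\rho t,\rho]$ will yield the stated estimate. The only subtlety is that we cannot directly invoke Lemma \ref{0.3} for $\tilde\Phi$, because $(\phi_3)$ does not furnish a finite \emph{upper} index for $\tilde\Phi$ when $\ell=1$; however, the one-sided bound is all we actually need for $0\le t<1$.

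First I would note that by $(\phi_1)$ and $(\phi_2)$, $\Phi'(\tau)=\tau\phi(\tau)$ is a strictly increasing $C^{0}$ bijection of $(0,\infty)$, so the Legendre transform gives $\tilde\Phi\in C^{1}$ with $\tilde\Phi'(s)=(\Phi')^{-1}(s)$, and, setting $\tau=\tilde\Phi'(s)$ so that $s=\tau\phi(\tau)$,
\begin{equation*}
\tilde\Phi(s)=s\tau-\Phi(\tau)=\tau^{2}\phi(\tau)-\Phi(\tau).
\end{equation*}
From this the key computation is
\begin{equation*}
\frac{s\,\tilde\Phi'(s)}{\tilde\Phi(s)}=\frac{s\tau}{s\tau-\Phi(\tau)}=\frac{k(\tau)}{k(\tau)-1},\qquad k(\tau):=\frac{\phi(\tau)\tau^{2}}{\Phi(\tau)}.
\end{equation*}
By $(\phi_3)$, $k(\tau)\le m$; since the map $x\mapsto x/(x-1)$ is decreasing on $(1,\infty)$, this yields
\begin{equation*}
\frac{s\,\tilde\Phi'(s)}{\tilde\Phi(s)}\ge\frac{m}{m-1}\qquad\text{for all }s>0.
\end{equation*}

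For the final step I would fix $\rho>0$ and consider $g(t)=\tilde\Phi(\rho t)\,t^{-m/(m-1)}$ on $(0,1]$. A direct logarithmic differentiation gives
\begin{equation*}
\frac{g'(t)}{g(t)}=\frac{1}{t}\left(\frac{\rho t\,\tilde\Phi'(\rho t)}{\tilde\Phi(\rho t)}-\frac{m}{m-1}\right)\ge 0
\end{equation*}
by the inequality just established, so $g$ is nondecreasing on $(0,1]$. Therefore $g(t)\le g(1)=\tilde\Phi(\rho)$ for every $t\in(0,1)$, which rearranges to the claimed inequality; the case $t=0$ is trivial since $\tilde\Phi(0)=0$.

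The main obstacle, and the reason I would not simply quote an analogue of Lemma \ref{0.3}, is that the upper index $\ell/(\ell-1)$ blows up when $(\phi_3)$ allows $\ell=1$, so the dual lemma in its full two-sided form is not available. The monotonicity argument for $g$ above circumvents this by using only the one-sided bound $\tfrac{s\tilde\Phi'(s)}{\tilde\Phi(s)}\ge\tfrac{m}{m-1}$, which is exactly what $(\phi_3)$ delivers and exactly what is needed because $0\le t<1$ selects the larger of the two potential exponents $m/(m-1)$ and $\ell/(\ell-1)$ on the right-hand side.
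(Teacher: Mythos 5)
Your argument is correct. Note that the paper itself states Lemma \ref{0.51} without proof, as one of the standard facts imported from the Orlicz--Sobolev literature (Fukagai--Narukawa, Rao--Ren); the usual proof there is a pure duality argument that never differentiates $\tilde\Phi$: write $\tilde\Phi(\rho t)=\sup_{v\ge 0}\{\rho t v-\Phi(v)\}$, substitute $v=t^{1/(m-1)}w$, and use the lower bound of Lemma \ref{0.3}, namely $\Phi(t^{1/(m-1)}w)\ge \xi_0(t^{1/(m-1)})\Phi(w)=t^{m/(m-1)}\Phi(w)$ since $t^{1/(m-1)}\le 1$, to get $\tilde\Phi(\rho t)\le t^{m/(m-1)}\sup_{w\ge0}\{\rho w-\Phi(w)\}=t^{m/(m-1)}\tilde\Phi(\rho)$ in two lines. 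Your route through the Legendre identity $\tilde\Phi(\Phi'(\tau))=\tau^2\phi(\tau)-\Phi(\tau)$ and the resulting bound $\frac{s\tilde\Phi'(s)}{\tilde\Phi(s)}=\frac{k(\tau)}{k(\tau)-1}\ge\frac{m}{m-1}$ is a legitimate alternative and has the merit of making transparent why only the upper index $m$ (and not $\ell$, which may equal $1$) enters; the monotonicity of $g(t)=\tilde\Phi(\rho t)t^{-m/(m-1)}$ then closes the argument correctly. The only step you should make explicit is that $k(\tau)=\frac{\phi(\tau)\tau^2}{\Phi(\tau)}>1$ strictly (equivalently $\tilde\Phi(s)>0$ for $s>0$), so that dividing by $k(\tau)-1$ and by $\tilde\Phi(\rho t)$ is licit: this follows from $\Phi(\tau)=\int_0^{\tau}s\phi(s)\,ds<\tau\cdot\tau\phi(\tau)$, which uses the strict increase of $s\mapsto s\phi(s)$ in $(\phi_1)$ together with $(\phi_2)$; likewise the identification $\tilde\Phi'(s)=(\Phi')^{-1}(s)$ relies on this strict monotonicity and continuity. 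With that remark added, your proof is complete and consistent with the two-sided versions used elsewhere in the paper.
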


\begin{lemma}
	If $\Phi$ is $N$-function and $ (\int_{\Omega}\Phi (|u_n|)dx)$ is a bounded sequence, then $(u_n)$ is a bounded sequence in $L^{\Phi}(\Omega)$. When $\Phi\in(\Delta_{2})$, the equivalence is valid.
\end{lemma}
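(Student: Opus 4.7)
The lemma asserts two implications, with the converse requiring the $\Delta_2$-condition. The whole argument rests on two elementary facts about $N$-functions. First, since $\Phi$ is convex with $\Phi(0)=0$, we have $\Phi(\alpha t) \leq \alpha \Phi(t)$ for every $\alpha \in [0,1]$ and $t \geq 0$. Second, iterating the $\Delta_2$-inequality $\Phi(2t) \leq K\Phi(t)$ produces $\Phi(c t) \leq K(c)\,\Phi(t)$ for every $c > 1$, with $K(c)$ depending only on $c$ (take $n = \lceil \log_2 c \rceil$ and use monotonicity of $\Phi$ on $[0,\infty)$). These two estimates drive the two directions respectively.

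For the direct implication, put $M := \sup_n \int_\Omega \Phi(|u_n|)\,dx$. If $M \leq 1$, then $\lambda = 1$ is already admissible in the definition of the Luxemburg norm, so $\|u_n\|_{L^\Phi(\Omega)} \leq 1$. Otherwise, choose $\lambda = M > 1$; by the convexity inequality applied with $\alpha = 1/M$ we get $\Phi(|u_n|/M) \leq M^{-1}\,\Phi(|u_n|)$ pointwise, hence $\int_\Omega \Phi(|u_n|/M)\,dx \leq 1$, and therefore $\|u_n\|_{L^\Phi(\Omega)} \leq M$. In either case the sequence is bounded by $\max\{1,M\}$ in $L^\Phi(\Omega)$.

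For the converse under $\Delta_2$, suppose $\|u_n\|_{L^\Phi(\Omega)} \leq R$ for all $n$. Fix $\varepsilon > 0$. Since $R+\varepsilon$ strictly exceeds the infimum defining $\|u_n\|_{L^\Phi(\Omega)}$, there is $\lambda_n' < R+\varepsilon$ with $\int_\Omega \Phi(|u_n|/\lambda_n')\,dx \leq 1$; monotonicity of $\Phi$ on $[0,\infty)$ then gives $\int_\Omega \Phi(|u_n|/(R+\varepsilon))\,dx \leq 1$. Applying the iterated $\Delta_2$-scaling with factor $R+\varepsilon$ yields $\int_\Omega \Phi(|u_n|)\,dx \leq K(R+\varepsilon)$, uniformly in $n$. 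The only subtle point I anticipate is the range of validity of the $\Delta_2$-scaling: on $\Omega$ of infinite measure the paper's convention forces $t_0 = 0$, so the iteration gives the clean global bound above; for bounded $\Omega$ the contribution coming from $\{|u_n| \leq t_0\}$ is absorbed into an additive $|\Omega|\,\Phi(c t_0)$ constant. Beyond this bookkeeping, the proof is pure manipulation of the definition of the Luxemburg norm together with convexity and $\Delta_2$.
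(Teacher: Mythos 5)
Your proof is correct: the convexity estimate $\Phi(t/M)\leq M^{-1}\Phi(t)$ handles the forward direction, and the iterated $\Delta_2$-scaling applied to $\int_\Omega\Phi(|u_n|/(R+\varepsilon))\,dx\leq 1$ handles the converse, with the $|\Omega|$-finite correction for the case $t_0>0$ correctly noted. The paper states this lemma without proof as a standard fact from its Orlicz-space references, and your argument is precisely the standard one, so there is nothing to compare beyond saying it fills the gap correctly.
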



\begin{lemma}
	If $\Phi$ is an N-function of the form \eqref{*} satisfying $(\phi_1)$, $(\phi_2)$ and $(\phi_{3})$ with $\ell =1$, then $\tilde{\Phi }\notin (\Delta_{2})$.
\end{lemma}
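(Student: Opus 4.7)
The plan is to argue by contradiction. I would assume $\tilde{\Phi}\in(\Delta_{2})$ and deduce a uniform strict lower bound $\phi(t)t^{2}/\Phi(t)\geq c>1$ for all $t>0$, in direct contradiction to $\ell=1$.

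My first step would be to invoke (or quickly re-derive) the classical characterization: for any $C^{1}$ $N$-function $\Psi$,
\[
\Psi\in(\Delta_{2})\;\iff\; M:=\sup_{s>0}\frac{s\Psi'(s)}{\Psi(s)}<\infty.
\]
The forward direction is convexity: $s\Psi'(s)\leq\Psi(2s)-\Psi(s)\leq(K-1)\Psi(s)$, where $K$ is the $\Delta_{2}$-constant. The converse follows from integrating $(\log\Psi)'\leq M/s$ on $[s,2s]$ to obtain $\Psi(2s)\leq 2^{M}\Psi(s)$.

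Next I would use $(\phi_{1})$ and $(\phi_{2})$: they make $\Phi'(t)=t\phi(t)$ a continuous strictly increasing bijection of $[0,\infty)$ onto itself, so $\tilde{\Phi}$ is $C^{1}$ with $\tilde{\Phi}'=(\Phi')^{-1}$, and Young's inequality is attained along the graph of $\Phi'$. Writing $s=t\phi(t)$,
\[
\tilde{\Phi}(s)=st-\Phi(t)=t^{2}\phi(t)-\Phi(t),\qquad \tilde{\Phi}'(s)=t.
\]
Applying the characterization to $\tilde{\Phi}$ and substituting these identities yields
\[
\frac{t^{2}\phi(t)}{t^{2}\phi(t)-\Phi(t)}\leq M\qquad\text{for every }t>0.
\]
Setting $r(t):=\phi(t)t^{2}/\Phi(t)$, this reads $r(t)/(r(t)-1)\leq M$; note $r(t)>1$ pointwise, since $r(t_{0})=1$ would make the left-hand side infinite at $t_{0}$. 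Rearranging gives $r(t)\geq M/(M-1)>1$ uniformly in $t$, so $\ell=\inf_{t>0}r(t)\geq M/(M-1)>1$, contradicting $\ell=1$.

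The only genuine obstacle is the Legendre--Fenchel bookkeeping: the differentiability of $\tilde{\Phi}$ and the equality case of Young's inequality both rest on $\Phi'$ being a continuous strictly increasing bijection, which $(\phi_{1})$ and $(\phi_{2})$ supply. After that step the conclusion is one algebraic rearrangement.
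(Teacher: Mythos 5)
Your argument is correct. Note that the paper itself offers no proof of this lemma --- it is one of several facts recalled without proof in the preliminaries (the standard reference being Fukagai--Narukawa and Rao--Ren) --- so there is nothing to compare against; what you give is the standard duality argument, namely that $\tilde{\Phi}\in(\Delta_2)$ forces $\sup_{s>0} s\tilde{\Phi}'(s)/\tilde{\Phi}(s)<\infty$, and under the substitution $s=t\phi(t)$ this supremum equals $\sup_{t>0} r(t)/(r(t)-1)$ with $r(t)=\phi(t)t^2/\Phi(t)$, which is finite precisely when $\inf r>1$. Two small points worth tightening: (i) the positivity of the denominator $t^2\phi(t)-\Phi(t)$ is cleaner to get directly from $\Phi(t)=\int_0^t s\phi(s)\,ds<t^2\phi(t)$ (strict monotonicity of $s\mapsto s\phi(s)$), or from $\tilde{\Phi}(s)>0$ for $s>0$, rather than from the a posteriori observation that $r(t_0)=1$ would blow up the quotient; (ii) the rearrangement $r\geq M/(M-1)$ needs $M>1$, which does hold automatically here since $r(t)\leq m$ by $(\phi_3)$ gives $r/(r-1)\geq m/(m-1)>1$. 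Also note the paper's $\Delta_2$-condition is taken with $t_0=0$ in the whole-space setting, which is exactly the global version your characterization uses.
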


\begin{lemma}\label{1.0}
	Let $\Omega\subset\mathbb{R}^{N}$ be a domain, if $\Phi\in (\Delta_{2})$ and $(u_n)$ is a sequence in $L^{\Phi}(\Omega)$ with $u_n\longrightarrow u$ in $L^{\Phi}(\Omega)$, there is $H\in L^{\Phi}(\Omega)$ and a subsequence $(u_{n_j})$ such that\vspace*{0.2cm}\\
	$i)\;\;\;|u_{n_j}(x)|\leq H(x)$ a.e. in $\Omega$.\vspace*{0.2cm}\\
	$ii)\;\;\;u_{n_j}(x)\longrightarrow u(x)$ a.e. in $\Omega$ and all $j\in\mathbb{N}$.
\end{lemma}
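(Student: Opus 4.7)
The plan is to mimic the classical Riesz--Fischer argument that yields, for $L^{p}$ convergence, an almost everywhere convergent subsequence dominated by an $L^{p}$ function, adapting each step to the Orlicz setting using the $\Delta_{2}$-hypothesis and Lemma \ref{0.3}. The $\Delta_{2}$-condition is exactly what one needs to (i) pass freely between norm and modular convergence and (ii) deduce convergence in measure, so pointwise limits will match $u$.

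First I would pass to a fast subsequence. Since $u_{n}\to u$ in $L^{\Phi}(\Omega)$, extract $(u_{n_{j}})$ with $\|u_{n_{j+1}}-u_{n_{j}}\|_{\Phi}\leq 2^{-j}$. Define the increasing sequence of nonnegative measurable functions
\begin{equation*}
H_{k}(x)=|u_{n_{1}}(x)|+\sum_{j=1}^{k}|u_{n_{j+1}}(x)-u_{n_{j}}(x)|.
\end{equation*}
By the triangle (Minkowski) inequality for the Luxemburg norm,
\begin{equation*}
\|H_{k}\|_{\Phi}\leq \|u_{n_{1}}\|_{\Phi}+\sum_{j=1}^{k}2^{-j}\leq \|u_{n_{1}}\|_{\Phi}+1=:M,
\end{equation*}
so $\int_{\Omega}\Phi(H_{k}/M)\,dx\leq 1$ for every $k$.

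Next I would let $H(x):=\lim_{k\to\infty}H_{k}(x)\in[0,\infty]$ (monotone limit) and apply the monotone convergence theorem to $\Phi(H_{k}/M)$, which is nonnegative and pointwise increasing (here convexity and evenness of $\Phi$ give monotonicity in $|t|$). This yields $\int_{\Omega}\Phi(H/M)\,dx\leq 1$, hence $\|H\|_{\Phi}\leq M$, so in particular $H\in L^{\Phi}(\Omega)$ and $H<\infty$ a.e. On the full-measure set where $H$ is finite, the telescoping series converges absolutely, so $(u_{n_{j}}(x))$ is Cauchy in $\mathbb{R}$ and converges to some $v(x)$ with $|u_{n_{j}}(x)|\leq H(x)$ for every $j$.

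The remaining step is to identify $v$ with $u$ a.e. This is where the $\Delta_{2}$-condition enters: it gives $\int_{\Omega}\Phi(|u_{n}-u|)\,dx\to 0$, and then a Chebyshev-type estimate
\begin{equation*}
\Phi(\varepsilon)\,|\{|u_{n}-u|>\varepsilon\}|\leq \int_{\{|u_{n}-u|>\varepsilon\}}\Phi(|u_{n}-u|)\,dx\to 0
\end{equation*}
(valid for $\varepsilon>0$ since $\Phi$ is nondecreasing on $[0,\infty)$) shows $u_{n}\to u$ in measure on $\Omega$. Combining convergence in measure of the full sequence with a.e. convergence of the subsequence $u_{n_{j}}\to v$, the standard uniqueness of limits in measure forces $v=u$ a.e. Thus $(u_{n_{j}})$ satisfies (ii) and the bound $|u_{n_{j}}|\leq H$ from the previous paragraph is (i). The main technical point I expect to watch is the justification that $\Phi\in(\Delta_{2})$ transfers norm convergence to modular convergence globally (and not only on sets of finite measure), which is the reason the lemma imposes $\Phi\in(\Delta_{2})$ rather than just $\Phi\in(\Delta_{2})$ near infinity.
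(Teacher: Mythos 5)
Your argument is correct, and it is worth noting that the paper itself states Lemma \ref{1.0} without proof, as a standard fact quoted from the Orlicz-space literature cited in Section 2; what you give is exactly the classical Riesz--Fischer argument transplanted to the Luxemburg norm: fast subsequence, telescoping majorant $H$ controlled through $\int_\Omega\Phi(H_k/M)\,dx\le 1$, monotone convergence to get $H\in L^{\Phi}(\Omega)$ and $H<\infty$ a.e., and identification of the pointwise limit with $u$ via convergence in measure. One remark on the step you flag as delicate: passing from norm convergence to modular convergence does not actually require the $\Delta_2$-condition, since by convexity $\int_\Omega\Phi(|w|)\,dx\le\|w\|_{\Phi}$ whenever $\|w\|_{\Phi}\le 1$, so $\|u_n-u\|_{\Phi}\to 0$ already forces $\int_\Omega\Phi(|u_n-u|)\,dx\to 0$ and hence, via your Chebyshev estimate, convergence in measure; $\Delta_2$ is only needed for the converse implication, so your use of it is harmless but not essential to your own proof. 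You could also bypass convergence in measure entirely: from $|v-u_{n_j}|\le\sum_{i\ge j}|u_{n_{i+1}}-u_{n_i}|$ one gets $\|v-u_{n_j}\|_{\Phi}\le 2^{1-j}$, hence $\|v-u\|_{\Phi}=0$ and $v=u$ a.e.; but your route, with the usual sub-subsequence argument to cover the case $|\Omega|=\infty$, is equally valid.
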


For a $N$-function $\Phi$, the corresponding Orlicz-Sobolev space is defined as the Banach space $$W^{1, \Phi}(\Omega)=\left\{ u \in L^ {\Phi}(\Omega): \dfrac{\partial u}{\partial x_{i}} \in L^{\Phi}(\Omega), i=1,...,N \right\} ,$$ with the norm
\begin{align}\label{00}
\lVert u\Arrowvert_{1, \Phi} = \Arrowvert \nabla u \Arrowvert_{\Phi} + \Arrowvert u \Arrowvert_{\Phi}.
\end{align}
%

If $\Phi\in(\Delta_{2})$, the space $D^{1,\Phi}(\mathbb{R}^{N})$ is defined to be the complement of the space $C^{ \infty}_{0}(\mathbb{R}^{N})$ with respect to the standard
\begin{align}\label{0.10}
| u|_{D^{1,\Phi}(\mathbb{R}^{N})}=\lVert u\lVert_{\Phi_{*}}+\lVert \nabla u\lVert_{\Phi}.
\end{align}
Since the Orlicz-Sobolev inequality
\begin{align}\label{0.9}
\lVert u\lVert_{\Phi_{*}}\leq S_N\lVert \nabla u\lVert_{\Phi},
\end{align}
holds for $u\in D^{1,\Phi}(\mathbb{R}^{N})$ with a constant $S_N>0$, the norm \eqref{0.10} is equivalent to the norm
\begin{align}\label{0.11}
\| u\|_{D^{1,\Phi}(\mathbb{R}^{N})}=\lVert \nabla u\lVert_{\Phi},
\end{align}
on $D^{1,\Phi}(\mathbb{R}^{N})$. In this paper, we will use \eqref{0.11} as the norm of $D^{1,\Phi}(\mathbb{R}^{N})$. Clearly $$D^{1,\Phi}(\mathbb{R}^{N})\xhookrightarrow[cont\;]{} L^{\Phi_{*}}(\mathbb{R}^{N}).$$
\vspace*{-0.1cm}
The space $L^{\Phi}(\mathbb{R}^{N})$ is separable and reflexive when the $N$-functions $\Phi$ and $\tilde{\Phi}$ satisfy the $\Delta_{2}$-condition. Knowing that $D^{1,\Phi}(\mathbb{R}^{N})$ can be seen as a closed subspace of the space $L^{\Phi_{*}}(\mathbb{R}^{ N})\times (L^{\Phi}(\mathbb{R}^{N}))^{N}$, then $D^{1,\Phi}(\mathbb{R}^{N} )$ is reflexive when the $N$-functions $\Phi$, $\tilde{\Phi}$, $\Phi_*$ and $\tilde{\Phi}_*$ satisfy the $\Delta_{2}$-condition .

The following lemma is an immediate consequence of the Banach-Alaoglu-Bourbaki theorem \cite{Brezis}.

\begin{lemma}\label{0.6}
	Assume that $\Phi$ is an N-function of the form \eqref{*} satisfying $(\phi_1)$, $(\phi_2)$ and $(\phi_{3})$. If $(u_n)\subset D^{1,\Phi}(\mathbb{R}^{N})$ is a bounded sequence, then there exists a subsequence of $(u_n)$, which we will still denote by $(u_n )$, and $u\in D^{1,\Phi}(\mathbb{R}^{N})$ such that
	\begin{align}\label{0.23}
	u_n\xrightharpoonup[\quad]{\ast} u\;\;\;\text{ in }\;L^{\Phi_{*}}(\mathbb{R}^{N})\;\;\;\;\text{ and }\;\;\;\;	\dfrac{\partial u_n}{\partial x_i}\xrightharpoonup[\quad]{\ast} \dfrac{\partial u}{\partial x_i}\;\;\;\text{ in }\;L^{\Phi}(\mathbb{R}^{N})
	\end{align}
	or equivalently,
	\begin{align*}
	\int_{\mathbb{R}^N}u_nvdx\longrightarrow \int_{\mathbb{R}^{N}}uvdx,\;\;\forall v\in E^{\tilde{\Phi}_*}(\mathbb{R}^N)\end{align*}
	and
	\begin{align*}
	\int_{\mathbb{R}^{N}}\dfrac{\partial u_n}{\partial x_i}wdx\longrightarrow\int_{\mathbb{R}^{N}}\dfrac{\partial u}{\partial x_i}wdx,\;\;\forall w \in E^{\tilde{\Phi}}(\mathbb{R}^{N}).
	\end{align*}
\end{lemma}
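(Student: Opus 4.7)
The plan is to reduce the statement directly to the Banach-Alaoglu-Bourbaki theorem via the duality structure already recorded earlier in the excerpt: $L^{\Phi}(\mathbb{R}^{N})=\bigl(E^{\tilde{\Phi}}(\mathbb{R}^{N})\bigr)^{*}$ and $L^{\Phi_{*}}(\mathbb{R}^{N})=\bigl(E^{\tilde{\Phi}_{*}}(\mathbb{R}^{N})\bigr)^{*}$, with both preduals separable. First I would isolate the bounded sequences in the individual Orlicz factors. Since $\|u_{n}\|_{D^{1,\Phi}}=\|\nabla u_{n}\|_{\Phi}$ is bounded by hypothesis, each partial derivative $\partial u_{n}/\partial x_{i}$ is bounded in $L^{\Phi}(\mathbb{R}^{N})$, and the Orlicz-Sobolev inequality \eqref{0.9} yields a uniform bound on $\|u_{n}\|_{\Phi_{*}}$.

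Having produced $N+1$ bounded sequences in duals of separable Banach spaces, I would invoke Banach-Alaoglu-Bourbaki $N+1$ times and perform a standard diagonal extraction to obtain a single subsequence, still denoted $(u_{n})$, a limit $u\in L^{\Phi_{*}}(\mathbb{R}^{N})$, and limits $v_{i}\in L^{\Phi}(\mathbb{R}^{N})$ for $i=1,\dots,N$, with $u_{n}\xrightharpoonup[\quad]{*}u$ in $L^{\Phi_{*}}(\mathbb{R}^{N})$ and $\partial u_{n}/\partial x_{i}\xrightharpoonup[\quad]{*}v_{i}$ in $L^{\Phi}(\mathbb{R}^{N})$. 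What remains is to identify $v_{i}$ with the distributional derivative $\partial u/\partial x_{i}$, so as to be sure that $u$ belongs to $D^{1,\Phi}(\mathbb{R}^{N})$ and not merely to $L^{\Phi_{*}}$.

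For this identification I would test against $\varphi\in C^{\infty}_{c}(\mathbb{R}^{N})$. By construction of $E^{\tilde{\Phi}}(\mathbb{R}^{N})$ and $E^{\tilde{\Phi}_{*}}(\mathbb{R}^{N})$ as the Luxemburg-norm closures of $C^{\infty}_{0}$, both $\varphi$ and $\partial\varphi/\partial x_{i}$ are admissible test functionals for the two weak-$*$ dualities. Integration by parts for each $n$ yields $\int(\partial_{i}u_{n})\varphi\,dx=-\int u_{n}\,\partial_{i}\varphi\,dx$, and passing to the limit in both sides gives $\int v_{i}\,\varphi\,dx=-\int u\,\partial_{i}\varphi\,dx$, so that $v_{i}=\partial u/\partial x_{i}$ in $\mathcal{D}'(\mathbb{R}^{N})$; since each $v_{i}\in L^{\Phi}(\mathbb{R}^{N})$, one indeed has $u\in D^{1,\Phi}(\mathbb{R}^{N})$. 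The step demanding any care is precisely this distributional identification, but no genuine obstacle arises, in agreement with the authors' own comment that the lemma is an essentially immediate consequence of Banach-Alaoglu-Bourbaki combined with separability of the preduals.
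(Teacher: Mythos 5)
Your argument is correct and is essentially the paper's own route: the paper simply declares the lemma an immediate consequence of the Banach--Alaoglu--Bourbaki theorem via the dualities $L^{\Phi}=\bigl(E^{\tilde{\Phi}}\bigr)^{*}$, $L^{\Phi_{*}}=\bigl(E^{\tilde{\Phi}_{*}}\bigr)^{*}$ with separable preduals, and your write-up just fills in the standard details (diagonal extraction and identification of the weak-$*$ limits with the distributional derivatives by testing against $C^{\infty}_{0}$). The only implicit point, which the paper also takes for granted, is that $u\in L^{\Phi_{*}}(\mathbb{R}^{N})$ with $\partial u/\partial x_{i}\in L^{\Phi}(\mathbb{R}^{N})$ already yields $u\in D^{1,\Phi}(\mathbb{R}^{N})$, i.e.\ the density of $C^{\infty}_{0}$ in that space, which holds here since $\Phi\in(\Delta_{2})$.
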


From now on, we denote the limit \eqref{0.23} by $u_n\xrightharpoonup[\quad]{\ast} u$ in $D^{1,\Phi}(\mathbb{R}^{N})$.
As an immediate consequence of the last lemma, we have the following corollary.

\begin{corollary}\label{0.8}
	If $(u_n)\subset D^{1,\Phi}(\mathbb{R}^{N})$ is a bounded sequence with $u_n\longrightarrow u$ in $L^{\Phi}_{loc}(\mathbb{R}^{N})$, then $ u\in D^{1,\Phi}(\mathbb{R}^{N})$.
\end{corollary}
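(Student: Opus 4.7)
The plan is to use the weak-$\ast$ compactness furnished by Lemma \ref{0.6} to extract a weak-$\ast$ limit of some subsequence, and then identify that limit with $u$ via uniqueness of distributional limits. Since $(u_n)$ is bounded in $D^{1,\Phi}(\mathbb{R}^N)$, Lemma \ref{0.6} produces a subsequence, still denoted $(u_n)$, together with a function $v \in D^{1,\Phi}(\mathbb{R}^N)$ such that $u_n \xrightharpoonup{\ast} v$ in $D^{1,\Phi}(\mathbb{R}^N)$. In particular,
\[
\int_{\mathbb{R}^N} u_n\,\varphi\,dx \longrightarrow \int_{\mathbb{R}^N} v\,\varphi\,dx \qquad \text{for every } \varphi \in E^{\tilde{\Phi}_*}(\mathbb{R}^N),
\]
and since $E^{\tilde{\Phi}_*}(\mathbb{R}^N)$ is, by definition, the closure of $C^\infty_0(\mathbb{R}^N)$ in $L^{\tilde{\Phi}_*}(\mathbb{R}^N)$, this convergence holds in particular for every $\varphi \in C^\infty_0(\mathbb{R}^N)$.

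To identify $v$ with $u$, I will fix $\varphi \in C^\infty_0(\mathbb{R}^N)$ and let $\Omega = \operatorname{supp}\varphi$. Since $\Omega$ is bounded and $\varphi$ is bounded on it, one has $\varphi \in L^{\tilde{\Phi}}(\Omega)$. The assumption $u_n \to u$ in $L^{\Phi}_{loc}(\mathbb{R}^N)$ then gives $u_n \to u$ in $L^{\Phi}(\Omega)$, and the Orlicz H\"older inequality recalled in Section 2 yields
\[
\left|\int_{\mathbb{R}^N}(u_n - u)\varphi\,dx\right| \leq 2\,\|u_n - u\|_{L^{\Phi}(\Omega)}\,\|\varphi\|_{L^{\tilde{\Phi}}(\Omega)} \longrightarrow 0.
\]
Combining this with the weak-$\ast$ convergence above, I obtain $\int_{\mathbb{R}^N}(u-v)\varphi\,dx = 0$ for every $\varphi \in C^\infty_0(\mathbb{R}^N)$. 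The fundamental lemma of the calculus of variations then gives $u = v$ almost everywhere, so $u = v \in D^{1,\Phi}(\mathbb{R}^N)$, as required.

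No substantive obstacle is anticipated; the argument is essentially a uniqueness-of-distributional-limits calculation. The only point requiring care is that the same test function $\varphi \in C^\infty_0(\mathbb{R}^N)$ must be viewed simultaneously as an element of $E^{\tilde{\Phi}_*}(\mathbb{R}^N)$, in order to invoke the global weak-$\ast$ convergence, and as an element of $L^{\tilde{\Phi}}(\Omega)$, in order to apply the local H\"older inequality; both memberships are immediate from the respective definitions together with the boundedness of $\operatorname{supp}\varphi$.
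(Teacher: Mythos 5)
Your argument is correct and is exactly the route the paper intends: the corollary is stated there as an immediate consequence of Lemma \ref{0.6}, namely extracting a weak-$\ast$ limit $v\in D^{1,\Phi}(\mathbb{R}^{N})$ of a subsequence and identifying $v=u$ by testing against $\varphi\in C^{\infty}_0(\mathbb{R}^{N})$, using the local $L^{\Phi}$ convergence and the Orlicz H\"older inequality (the identification is legitimate since both $u$ and $v$ are locally integrable). No gaps; your care about viewing $\varphi$ both in $E^{\tilde{\Phi}_*}(\mathbb{R}^{N})$ and in $L^{\tilde{\Phi}}(\operatorname{supp}\varphi)$ is exactly the point that makes the two convergences compatible.
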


The last lemma is crucial when the space $D^{1,\Phi}(\mathbb{R}^{N})$ can be nonreflexive. And, this happens, for example, when $\Phi_{\alpha}(t)=|t|\ln(|t|^\alpha+1)$, for  $0<\alpha<\frac{N}{N-1}-1$, because $\tilde{\Phi }_{\alpha}$ does not verify the $\linebreak\Delta_{2}$-condition. Here we emphasize that the condition $(\phi_{3})$ guarantees that $\Phi$ and $\tilde{\Phi}$ verifies the $\Delta_{2}$-condition when $\ell> 1$, for more details see Fukagai and Narukawa \cite{FN}.

\section{Preliminary results}
This section focuses on preparing some preliminaries for proving Theorems \ref{Teo1}, \ref{Teo1.1} and \ref{Teo2}. Since the potential V may vanish at infinity, we cannot study equation $(P)$ on the Sobolev space $D^{1,\Phi}(\mathbb{R }^N)$ by variational methods. As
in \cite{Lucas}, we work in the space $\linebreak E=\big\{u\in D^{1,\Phi}(\mathbb{R}^{N}): \int_{\mathbb{R}^{N}} V(x)\Phi(|u |)dx<+\infty\big\}$ with norm
\begin{align*}
\lVert u\lVert_{E}=\lVert u\lVert_{D^{1,\Phi}(\mathbb{R}^{N})} +\lVert u\lVert_{L^{\Phi}_{V}(\mathbb{R}^{N})},
\end{align*}
where $\lVert u\lVert_{L^{\Phi}_{V}(\mathbb{R}^{N})}$ is the norm of Banach space $L^{\Phi}_{V}(\mathbb{R}^{N}).$

\begin{lemma}
	$E=\overline{C^{\infty}_{0}(\mathbb{R}^{N})}^{\lVert\cdot\lVert_{E}}$ is a Banach space and 	$E$ is compactly embedded in  $L^{\Phi}_{loc}(\mathbb{R}^{N})$
\end{lemma}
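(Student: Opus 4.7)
The plan is to split the statement into the two assertions and treat them in sequence. For the Banach property, I would first observe that the formula $\lVert u\rVert_{E} = \lVert u\rVert_{D^{1,\Phi}(\mathbb{R}^{N})} + \lVert u\rVert_{L^{\Phi}_{V}(\mathbb{R}^{N})}$ defines a norm on the intersection $X := D^{1,\Phi}(\mathbb{R}^{N}) \cap L^{\Phi}_{V}(\mathbb{R}^{N})$, and show that $(X,\lVert\cdot\rVert_{E})$ is complete. Given a Cauchy sequence $(u_n)\subset X$, the decomposition of the norm makes $(u_n)$ Cauchy in each factor, so by completeness of $D^{1,\Phi}(\mathbb{R}^{N})$ and $L^{\Phi}_{V}(\mathbb{R}^{N})$ there are $u\in D^{1,\Phi}(\mathbb{R}^{N})$ and $w\in L^{\Phi}_{V}(\mathbb{R}^{N})$ with $u_n\to u$ and $u_n\to w$ in the respective spaces. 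Both modes of convergence imply convergence in $L^{1}_{loc}(\mathbb{R}^{N})$ up to subsequence, so $u=w$ almost everywhere, hence $u\in X$ and $\lVert u_n-u\rVert_{E}\to 0$. The set $E$ is then by definition the closure of $C^{\infty}_{0}(\mathbb{R}^{N})$ in this Banach space, so it is a closed subspace of $X$ and therefore itself Banach.

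For the compact embedding, the plan is to take an arbitrary bounded sequence $(u_n)\subset E$, fix a bounded open set $\Omega\subset\mathbb{R}^{N}$ with Lipschitz boundary, and show that $(u_n|_{\Omega})$ admits a subsequence converging in $L^{\Phi}(\Omega)$. First, $\lVert \nabla u_n\rVert_{L^{\Phi}(\Omega)}\leq \lVert \nabla u_n\rVert_{\Phi} \leq \lVert u_n\rVert_{E}$. Next, by the Orlicz--Sobolev embedding \eqref{0.9}, $\lVert u_n\rVert_{\Phi_{*}}$ is bounded, and on bounded domains the inclusion $L^{\Phi_{*}}(\Omega)\hookrightarrow L^{\Phi}(\Omega)$ holds (this follows from Lemmas \ref{0.3} and \ref{0.5} together with the fact that $\Phi_*$ grows faster than $\Phi$ at infinity, since $\ell^{*}>m$). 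Hence $(u_n|_{\Omega})$ is bounded in $W^{1,\Phi}(\Omega)$. Condition $(\phi_3)$ together with $m<\infty$ gives that $\Phi\in(\Delta_{2})$, so the classical Donaldson--Trudinger/Adams compact embedding theorem for Orlicz--Sobolev spaces applies and yields $W^{1,\Phi}(\Omega)\hookrightarrow\!\hookrightarrow L^{\Phi}(\Omega)$. Extracting a convergent subsequence in $L^{\Phi}(\Omega)$ and running a standard diagonal argument along an exhaustion $\{\Omega_k\}$ of $\mathbb{R}^{N}$ by balls produces a single subsequence converging in $L^{\Phi}(\Omega)$ for every bounded $\Omega$, which is exactly the compact embedding $E\hookrightarrow\!\hookrightarrow L^{\Phi}_{loc}(\mathbb{R}^{N})$.

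The part that requires the most care is the Banach argument, because the two candidate limits produced from the two norms must be identified. The natural route is via $L^{1}_{loc}$, exploiting that $\Phi$ (being an $N$-function with $\ell\geq 1$) controls $|u|$ on compact sets via a Jensen-type inequality combined with Lemma \ref{0.3}, so that convergence in $D^{1,\Phi}(\mathbb{R}^{N})$ or in $L^{\Phi}_{V}(\mathbb{R}^{N})$ (on sets where $V$ is bounded below, of which there are plenty inside any ball because $V$ is continuous) forces $L^{1}_{loc}$ convergence up to a subsequence. The compact embedding part, by contrast, should follow cleanly once the $W^{1,\Phi}(\Omega)$ bound is in hand, since $\Phi\in(\Delta_2)$ places us inside the scope of the classical Orlicz--Sobolev Rellich--Kondrachov theorem even when $\tilde{\Phi}\notin(\Delta_2)$.
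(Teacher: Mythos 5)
Your two arguments that are actually carried out are essentially correct and follow the standard route. The completeness of $X=D^{1,\Phi}(\mathbb{R}^{N})\cap L^{\Phi}_{V}(\mathbb{R}^{N})$ under the sum norm works, with one small repair: to identify the two limits you should invoke $(K_0)$, which gives $V>0$ everywhere, so that continuity makes $V$ bounded below by a positive constant on \emph{every} compact set (continuity of a merely nonnegative $V$ would not by itself produce enough sets on which $V$ is bounded below to conclude $u=w$ a.e.); with that, $L^{\Phi}_{V}$-convergence controls $L^{\Phi}(K)$- and hence $L^{1}(K)$-convergence on compacts, and the $D^{1,\Phi}$-limit is handled through \eqref{0.9} and H\"older. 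The local compact embedding is also fine: boundedness in $E$ gives boundedness in $W^{1,\Phi}(\Omega)$ for every ball $\Omega$ (using \eqref{0.9} and $L^{\Phi_*}(\Omega)\hookrightarrow L^{\Phi}(\Omega)$ on bounded domains, which uses $m<\ell^*$ via Lemmas \ref{0.3} and \ref{0.5}), the Donaldson--Trudinger compactness theorem applies since $\Phi$ increases essentially more slowly than $\Phi_*$, and a diagonal extraction finishes. Note the paper offers no proof of this lemma (these preliminaries are deferred to \cite{Lucas}), so there is nothing to compare step by step; for these two assertions your argument is the expected one.

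The genuine gap is the equality $E=\overline{C^{\infty}_{0}(\mathbb{R}^{N})}^{\lVert\cdot\lVert_{E}}$ itself. The paper defines $E$ \emph{before} the lemma as the set $\big\{u\in D^{1,\Phi}(\mathbb{R}^{N}):\int_{\mathbb{R}^{N}}V(x)\Phi(|u|)\,dx<\infty\big\}$, which (since $\Phi\in(\Delta_2)$ by $(\phi_3)$) is exactly your space $X$; the lemma therefore asserts, in particular, that this set coincides with the closure of $C^{\infty}_{0}(\mathbb{R}^{N})$ in $\lVert\cdot\rVert_{E}$. This density is not a definition, and it is used substantively later, e.g.\ to pass from test functions $v\in C^{\infty}_{0}(\mathbb{R}^{N})$ to arbitrary $v\in E$ in \eqref{07.4}. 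By declaring $E$ to be the closure ``by definition'' you prove a weaker statement and leave the real content of the lemma untouched. Closing it requires an approximation argument: given $u\in X$, approximate simultaneously in both components of the norm by truncating $u$, multiplying by cutoff functions, and mollifying, using the global $\Delta_2$-condition of $\Phi$ so that modular and norm convergence coincide and dominated convergence can be applied to $\int_{\mathbb{R}^{N}}\Phi(|\nabla u-\nabla u_k|)\,dx$ and $\int_{\mathbb{R}^{N}}V(x)\Phi(|u-u_k|)\,dx$. That approximation, rather than completeness or local compactness, is where the work of this lemma actually lies.
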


\begin{lemma}\label{0.1}
	Suppose $(u_n)\subset E$ is a bounded sequence in $E$, then there is $u\in E$ such that $u_n \xrightharpoonup[\quad]{\ast} u$ in $D^{1,\Phi}(\mathbb{R}^{N})$ and  $$ \int_{\mathbb{R}^{N}} \Phi(|\nabla u|)dx\leq\displaystyle \liminf_{n\rightarrow\infty }\int_{\mathbb{R}^{N}} \Phi(|\nabla u_n|)dx.$$
\end{lemma}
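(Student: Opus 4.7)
The plan is to first extract a weak-$\ast$ accumulation point in $D^{1,\Phi}(\mathbb{R}^{N})$, then verify it belongs to $E$, and finally establish the modular lower semicontinuity by convex duality. Since the $E$-norm dominates the $D^{1,\Phi}$-norm, the sequence $(u_n)$ is bounded in $D^{1,\Phi}(\mathbb{R}^{N})$, so Lemma~\ref{0.6} furnishes, up to a subsequence, a $u\in D^{1,\Phi}(\mathbb{R}^{N})$ with $u_n\xrightharpoonup{\ast}u$ in $D^{1,\Phi}(\mathbb{R}^{N})$; in particular $\partial_i u_n\xrightharpoonup{\ast}\partial_i u$ in $L^{\Phi}(\mathbb{R}^{N})$ for every $i$. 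To check $u\in E$, I would invoke the compact embedding $E\hookrightarrow L^{\Phi}_{\mathrm{loc}}(\mathbb{R}^{N})$ of the preceding lemma, which gives $u_n\to u$ in $L^{\Phi}_{\mathrm{loc}}(\mathbb{R}^{N})$. Because $(\phi_3)$ forces $\Phi\in(\Delta_{2})$, a ball-by-ball application of Lemma~\ref{1.0} together with a diagonal extraction produces a further subsequence converging almost everywhere in $\mathbb{R}^{N}$. The boundedness of $(u_n)$ in $E$ controls $\sup_n\int_{\mathbb{R}^{N}}V(x)\Phi(|u_n|)\,dx$, and Fatou's lemma then yields $\int_{\mathbb{R}^{N}}V(x)\Phi(|u|)\,dx<+\infty$, so $u\in E$.

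For the gradient semicontinuity I would use convex duality. For every $\vec{s}\in E^{\tilde{\Phi}}(\mathbb{R}^{N})^{N}$ the pointwise Young inequality $\vec{s}\cdot\nabla u_n\leq\Phi(|\nabla u_n|)+\tilde{\Phi}(|\vec{s}|)$ integrates to
\begin{equation*}
\int_{\mathbb{R}^{N}}\vec{s}\cdot\nabla u_n\,dx-\int_{\mathbb{R}^{N}}\tilde{\Phi}(|\vec{s}|)\,dx\leq\int_{\mathbb{R}^{N}}\Phi(|\nabla u_n|)\,dx,
\end{equation*}
and since weak-$\ast$ convergence of $\nabla u_n$ in $L^{\Phi}$ tests precisely against the predual $E^{\tilde{\Phi}}$, passing to the liminf gives
\begin{equation*}
\int_{\mathbb{R}^{N}}\vec{s}\cdot\nabla u\,dx-\int_{\mathbb{R}^{N}}\tilde{\Phi}(|\vec{s}|)\,dx\leq\liminf_{n\to\infty}\int_{\mathbb{R}^{N}}\Phi(|\nabla u_n|)\,dx.
\end{equation*}
The Young equality case dictates the canonical choice $\vec{s}^{\,*}(x):=\phi(|\nabla u(x)|)\,\nabla u(x)$, for which the Young identity gives $\vec{s}^{\,*}\cdot\nabla u-\tilde{\Phi}(|\vec{s}^{\,*}|)=\Phi(|\nabla u|)$ pointwise, and which lies in $L^{\tilde{\Phi}}$ because $(\phi_3)$ forces $\tilde{\Phi}(|\vec{s}^{\,*}|)\leq(m-1)\Phi(|\nabla u|)\in L^{1}$. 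Since $\vec{s}^{\,*}$ need not belong to the smaller predual $E^{\tilde{\Phi}}$, I would instead test against the truncation $\vec{s}^{\,*}_R:=\vec{s}^{\,*}\,\chi_{B_R\cap\{|\nabla u|\leq R\}}$, a bounded compactly supported vector field that belongs to $E^{\tilde{\Phi}}(\mathbb{R}^{N})^{N}$ by mollification and dominated convergence on its modular (both factors are now uniformly bounded on a bounded support). Sending $R\to\infty$, dominated convergence drives the left-hand side evaluated at $\vec{s}^{\,*}_R$ up to $\int_{\mathbb{R}^{N}}\Phi(|\nabla u|)\,dx$, closing the estimate.

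The hard part is precisely this last stage. Because $\tilde{\Phi}$ may fail the $\Delta_{2}$-condition one has $E^{\tilde{\Phi}}\subsetneq L^{\tilde{\Phi}}$, so weak-$\ast$ convergence probes only the smaller predual and the usual Mazur-type arguments for lower semicontinuity of convex functionals are unavailable. The substitute is the truncated-optimizer convex-duality scheme above, in which $(\phi_3)$ together with the Young identity simultaneously makes the truncations admissible in $E^{\tilde{\Phi}}$ and provides the monotone passage to the limit, yielding the weak-$\ast$ lower semicontinuity of the modular $v\mapsto\int_{\mathbb{R}^{N}}\Phi(|\nabla v|)\,dx$ even without reflexivity of the ambient Orlicz-Sobolev space.
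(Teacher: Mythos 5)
Your argument is correct. Note that the paper itself gives no proof here — it simply cites Lemmas 3.4 and 4.7 of \cite{Lucas} (a work in preparation) — so there is nothing in the text to compare against line by line; what you have written is a self-contained substitute. The structure is sound: boundedness in $E$ dominates the $D^{1,\Phi}$-norm, so Lemma \ref{0.6} gives the weak-$\ast$ limit; membership $u\in E$ follows from local compactness, a.e.\ convergence along a diagonal subsequence, the bound $\int V\Phi(|u_n|)\leq \xi_1(\lVert u_n\lVert_{V,\Phi})$ (valid since $(\phi_3)$ yields $\Phi\in(\Delta_2)$), and Fatou. The convex-duality step is the genuinely delicate part in the non-reflexive setting, and you handle it correctly: the Young identity $t\Phi'(t)=\Phi(t)+\tilde\Phi(\Phi'(t))$ makes $\vec{s}^{\,*}=\phi(|\nabla u|)\nabla u$ the pointwise optimizer, the bound $\tilde\Phi(\Phi'(t))\leq(m-1)\Phi(t)$ from $(\phi_3)$ keeps it in $L^{\tilde\Phi}$, and the truncation onto $B_R\cap\{|\nabla u|\leq R\}$ produces a bounded, compactly supported field that does lie in the predual $E^{\tilde\Phi}$ (mollification converges in the Luxemburg norm because $\tilde\Phi(\theta s)\leq\theta\tilde\Phi(s)$ for $\theta\in[0,1]$ reduces matters to $L^1$-convergence on a fixed compact set). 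Monotone convergence in $R$ then closes the estimate. Two small housekeeping points you may wish to make explicit: the strong $L^{\Phi}_{loc}$ limit must be identified with the weak-$\ast$ limit $u$ (routine, by uniqueness of distributional limits), and the liminf inequality should be arranged over the full sequence by first passing to a subsequence realizing the liminf of the modulars and only then extracting the weak-$\ast$ convergent subsubsequence. Neither affects the validity of the proof.
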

\noindent {\it{Proof:}}
See Lemma $3.4$ and Lemma $4.7$ in \cite{Lucas}.

	\begin{remark}\label{5.02}
	The inequality \eqref{1.1} and \eqref{1.1'} implies the following inequalities
	$$\xi_{0,A}(t)A(\rho)\leq A(\rho t)\leq\xi_{1,A}(t)A(\rho),~~\forall \rho,t\geq0$$
	$$\xi_{0,Z}(t)Z(\rho)\leq Z(\rho t)\leq\xi_{1,Z}(t)Z(\rho),~~\forall \rho,t\geq0$$
	when
	$$\xi_{0,A}(t)=\min\{t^{a_1},t^{a_2}\}~~\text{and }~~\xi_{1,A}(t)=\max\{t^{a_1},t^{a_2}\},~~\forall t\geq 0.$$
		$$\xi_{0,Z}(t)=\min\{t^{z_1},t^{z_2}\}~~\text{and }~~\xi_{1,Z}(t)=\max\{t^{z_1},t^{z_2}\},~~\forall t\geq 0.$$ Besides by Lemma \ref{0.3} and Lemma \ref{0.5}, we have
	\begin{align*}
	\limsup_{t\rightarrow0} \dfrac{A(t)}{\Phi(t)}\leq1\;\;\text{ and }\;\;\limsup_{|t|\rightarrow\infty} \dfrac{A(t)}{\Phi_{*}(t)}\leq1
	\end{align*}
		\begin{align*}
	\limsup_{t\rightarrow0} \dfrac{Z(t)}{\Phi(t)}\leq1\;\;\text{ and }\;\;\limsup_{|t|\rightarrow\infty} \dfrac{Z(t)}{\Phi_{*}(t)}\leq1
	\end{align*}
\end{remark}
	\begin{remark}\label{5.03}
	The inequality $(B_3)$ implies the following inequalities
	$$\xi_{0,B}(t)B(\rho)\leq B(\rho t)\leq\xi_{1,B}(t)B(\rho),~~\forall \rho,t\geq0$$
	when
	$$\xi_{0,B}(t)=\min\{t^{b_1},t^{b_2}\}~~\text{and }~~\xi_{1,B}(t)=\max\{t^{b_1},t^{b_2}\},~~\forall t\geq 0.$$ Besides by Lemma \ref{0.3} and Lemma \ref{0.5}, we have
	\begin{align*}
	\limsup_{t\rightarrow0} \dfrac{B(t)}{\Phi(t)}=0\;\;\text{ and }\;\;\limsup_{|t|\rightarrow\infty} \dfrac{B(t)}{\Phi_{*}(t)}=0
	\end{align*}
\end{remark}

\begin{proposition}(Hardy-type inequality) \label{01.41}
If $(V,K)\in \mathcal{K}_1$, then the space $E$ is continuous embedded in $L^{A}_{K^s}(\mathbb{R}^{N})$ and $L^{Z}_{K^s}(\mathbb{R}^{N})$. On the other hand, if $(V,K)\in \mathcal{K}_2$, then the space $E$ is continuous embedded in $L^{B}_{K^s}(\mathbb{R}^{N})$.	
\end{proposition}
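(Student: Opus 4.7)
The plan is to establish a pointwise/modular inequality of the form
\[
\int_{\mathbb{R}^N} K(x)^s A(u)\,dx \le \epsilon\,C_1\!\int V\Phi(u)\,dx + C_\epsilon\!\int \Phi_*(u)\,dx
\]
(and the analogous one with $B$ in place of $A$) and then convert it into the required continuous embedding via the $\xi_i$-estimates of Lemmas \ref{0.3} and \ref{0.5}, combined with the $\Delta_2$-structure of $A$ (resp.\ $B$) coming from $(A_3)$ and Remark \ref{5.02} (resp.\ $(B_3)$ and Remark \ref{5.03}). The key input from $E$ is that both $\int V\Phi(u)\,dx$ and $\int \Phi_*(u)\,dx$ are controlled by $\|u\|_E$, using the defining norm of $E$ and the Orlicz–Sobolev inequality \eqref{0.9}.

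\textbf{Case $(V,K)\in\mathcal{K}_1$.} I first invoke Remark \ref{5.02}: since $A(t)/\Phi(t)\to 0$ at $0$ and $A(t)/\Phi_*(t)\to 0$ at infinity, for every $\epsilon>0$ there is $C_\epsilon>0$ with $A(t)\le \epsilon\Phi(t)+C_\epsilon\Phi_*(t)$ for all $t\in\mathbb{R}$. Hypothesis $(K_2)$ combined with $K\in L^\infty$ (from $(K_0)$) gives $K(x)^s = K(x)^{s-1}K(x)\le \|K\|_\infty^{s-1}\|K/V\|_\infty V(x)$ and $K(x)^s\le \|K\|_\infty^s$, so multiplying by $K^s$ and integrating yields the displayed modular bound. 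Homogenizing (replacing $u$ by $u/\lambda$), taking $\lambda$ large and using the $\xi_{i,A}$ estimates from Remark \ref{5.02} then converts this into the Luxemburg-norm inequality $\|u\|_{L^A_{K^s}}\le C\|u\|_E$.

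\textbf{Case $(V,K)\in\mathcal{K}_2$.} This is the substantive step and hinges on the function $H$. Directly from its definition,
\[
H(x)\,B(\tau) \le V(x)\Phi(\tau)+\Phi_*(\tau),\qquad \forall\,x\in\mathbb{R}^N,\ \tau>0.
\]
Setting $\tau=|u(x)|$ and multiplying by $K(x)^s/H(x)$ gives the pointwise inequality $K^s B(u)\le \tfrac{K^s}{H}\bigl(V\Phi(u)+\Phi_*(u)\bigr)$. By $(K_3)$, for any $\epsilon>0$ I may choose $R>0$ with $K^s/H<\epsilon$ on $B_R(0)^c$; this immediately controls $\int_{B_R^c} K^s B(u)\,dx$ by $\epsilon\bigl(\int V\Phi(u)+\int\Phi_*(u)\bigr)$. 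Inside $B_R$, I use Remark \ref{5.03} to bound $B(u)\le \epsilon'\Phi(u)+C_{\epsilon'}\Phi_*(u)$, together with $\|K\|_\infty$, to reduce to the already-controlled integrals. Assembling both pieces and re-scaling in $\lambda$ as before gives the embedding $E\hookrightarrow L^B_{K^s}(\mathbb{R}^N)$.

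\textbf{Expected main obstacle.} The technical heart is the $\mathcal{K}_2$ case: the splitting $\mathbb{R}^N=B_R\cup B_R^c$ must be performed so that the constants remain independent of $u$, and the homogenization step must be compatible with the non-$\Delta_2$ setting when $\ell=1$. The use of the intermediate function $H$ is precisely the device (introduced in \cite{AlvesandMarco} and extended to the Orlicz setting in \cite{Lucas}) that lets one trade vanishing of $K$ against the combined decay of $V\Phi$ and $\Phi_*$; getting the bookkeeping right, while avoiding any appeal to reflexivity of $L^\Phi$ or $L^{\Phi_*}$, is the main care needed.
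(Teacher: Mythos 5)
Your overall route coincides with the paper's. In the $\mathcal{K}_1$ case the comparison $A(t)\le\varepsilon\Phi(t)+C_\varepsilon\Phi_*(t)$ from Remark \ref{5.02} together with $K^s\le\|K\|_\infty^{s-1}\|K/V\|_\infty V$ is exactly the (unwritten, ``obvious'') argument intended there; in the $\mathcal{K}_2$ case the paper likewise splits $\mathbb{R}^N$ at a radius $r$ furnished by $(K_3)$, uses $K(x)^sB(t)\le V(x)\Phi(t)+\Phi_*(t)$ for $|x|\ge r$ (your formulation via $H(x)B(\tau)\le V(x)\Phi(\tau)+\Phi_*(\tau)$ is the same inequality, and is in fact more consistent with the statement of $(K_3)$ and with the target space $L^{B}_{K^s}$ than the paper's display, which drops the exponent $s$), and then rescales by a multiple of $\|u\|_E$ using the unit-modular property of the Luxemburg norm. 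No reflexivity of $L^{\Phi}$ or $L^{\Phi_*}$ enters at any point, so the caution you raise about $\ell=1$ is not an issue for this proposition.

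There is, however, one concrete gap: your treatment of the region inside the ball. From Remark \ref{5.03} and $K^s\le\|K\|_\infty^s$ you get $K(x)^sB(u)\le\|K\|_\infty^s\bigl(\varepsilon'\Phi(u)+C_{\varepsilon'}\Phi_*(u)\bigr)$ on $B_R$, and you claim this reduces to ``already-controlled integrals''. It does not: the norm of $E$ controls $\int_{\mathbb{R}^N}V(x)\Phi(|u|)\,dx$ and, via \eqref{0.9}, $\int_{\mathbb{R}^N}\Phi_*(|u|)\,dx$, but not the unweighted integral $\int_{B_R}\Phi(|u|)\,dx$; since $V$ may be very small, $\|K\|_\infty$ alone cannot convert $K^s\Phi(u)$ into $V\Phi(u)$. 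The paper closes precisely this point in \eqref{01.8} by inserting the factor $\|K/V\|_{L^\infty(B_r(0))}$, which is finite because $V$ is continuous and strictly positive by $(K_0)$, hence bounded away from zero on the compact set $\overline{B_r(0)}$; with it one writes $K(x)^s\Phi(t)\le \|K\|_\infty^{s-1}\|K/V\|_{L^\infty(B_r(0))}V(x)\Phi(t)$ on $B_r(0)$ and only the controlled modulars remain. (Alternatively one could use $\Phi(t)\le C\Phi_*(t)+\Phi(1)$, which follows from Lemmas \ref{0.3} and \ref{0.5} and $m<\ell^*$, and absorb the additive constant over the bounded ball, but some such device is indispensable.) Once the weight $V$ is reinstated inside the ball, your homogenization step goes through with constants independent of $u$, and the proof matches the paper's.
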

\noindent {\it{Proof:}}
The case $(V,K)\in \mathcal{K}_1$ is obvious.

Now, let us assume that $(V,K)\in \mathcal{K}_2$. As $E$ is continuously embedded in $L^{\Phi_*}(\mathbb{R}^N)$, there exists $C_1>0$ such that
	\begin{align}\label{2}
	\lVert u\lVert_{\Phi_{*}}\leq C_1\lVert u\lVert_{E},\;\;\forall u\in E.
	\end{align}
	By the condition $(K_3)$, there is $r>0$ satisfying
	\begin{align}\label{01.7}
	K(x)B(t)\leq V(x)\Phi(t)+\Phi_{*}(t),\;\;\;\;\forall t>0\;\text{ and }\;|x|\geq r.
	\end{align}
	On the other hand, by the Remark \ref{5.02}, there is a constant $C_2>0$ such that
	\begin{align*}
	B(t)\leq C_2\Phi(t)+C_2\Phi_{*}(t),\;\;\;\; \forall t>0.
	\end{align*}
	Hence, for each $x\in B_{r}(0)$,
	\begin{align}\label{01.8}
	\begin{split}
	K(x)B(t)\leq C_2\left\lVert\frac{K}{V}\right\lVert_{L^\infty( B_{r}(0))}V(x)\Phi(t)+C_2\lVert K\lVert_{\infty}\Phi_{*}(t),\;\;\;\;\forall t>0.
	\end{split}
	\end{align}
	Combining \eqref{01.7} and \eqref{01.8},
	\begin{align}\label{3}
	\begin{split}
	K(x)B(t)\leq& C_3V(x)\Phi(t)+C_3\Phi_{*}(t),\;\;\;\;\forall t>0\;\text{ and }\;x\in\mathbb{R}^N
	\end{split}
	\end{align}
	with $C_3=\max\{1,C_2\lVert K\lVert_{\infty}, C_2\left\lVert\frac{K}{V}\right\lVert_{L^\infty( B_{r}(0))}\}$. By the inequalities \eqref{2} and \eqref{2}, we get
	{\small\begin{align*}
		\int_{\mathbb{R}^{N}}K(x)B\left(\dfrac{|u|}{C_3\lVert u\lVert_{E}+C_1\lVert u\lVert_{E}}\right)dx\leq C_3	\int_{\mathbb{R}^{N}}V(x)\Phi\left(\dfrac{|u|}{\lVert u\lVert_{V,\Phi}}\right)dx+C_3	\int_{\mathbb{R}^{N}}\Phi_{*}\left(\dfrac{|u|}{\lVert u\lVert_{\Phi_{*}}}\right)dx\leq C_4
		\end{align*}}
	\noindent where $C_4$ is a positive constant that does not depend on $u$. So we can conclude that $E\subset L^{B}_{K}(\mathbb{R}^{N})$. Furthermore, there is a constant $C_5>0$ that does not depend on $u$, so $\lVert u\lVert_{K,B}\leq C_5 \lVert u\lVert_{E}$. Concluding that $E$ is continuous embedded in $L^{B}_{K^s}(\mathbb{R}^{N})$.

\qed

\begin{lemma}\label{01.290}
	Suppose that $(V,K)\in \mathcal{K}_1$ and $(f_1)$ holds. For each $u\in E$, there is a constant $C_1>0$ that does not depend on $u$, such that
	{\small\begin{align*}
	\left|	\int_{\mathbb{R}^{N}}\int_{\mathbb{R}^{N}}\dfrac{K(x)K(y)F(u(x))F(u(y))}{|x|^{\alpha}|x-y|^{\lambda}|y|^{\alpha}}dxdy\right|\leq C_1\left[\left(\int_{\mathbb{ R}^{N}}K(x)^s A(|u|) dx\right)^{\frac{2}{s}}+\left(\int_{\mathbb{ R}^{N}} K(x)^s Z(|u|) dx\right)^{\frac{2}{s}}\right].
	\end{align*}}
Furthermore, for each $u\in E$, there is a constant $C_2>0$, which does not depend on $u$, such that
	\begin{align}\label{01.133}
	\left|\int_{\mathbb{R}^{N}}\int_{\mathbb{R}^{N}}\dfrac{K(x)K(y)F(u(x))f(u(y))v(y)}{|x|^{\alpha}|x-y|^{\lambda}|y|^{\alpha}}dxdy\right|\leq C\lVert v\lVert_{E},\;\;\forall v\in E.
	\end{align}
\end{lemma}

\noindent {\it{Proof:}} By $(f_1)$, there is a constant $C>0$ such that
\begin{align}\label{01.90}
|f(t)|^s\leq  C(a(t)t^{2-s}+z(t)t^{2-s}), \;\;\;\;\forall t\in\mathbb{R}.
\end{align}
For each $t\geq0$, we have
\begin{align*}
|F(t)|\leq \int_{0}^{t}|f(\tau)|d\tau\leq \left[\int_{0}^{t}|f(\tau)|^sd \tau\right]^{\frac{1}{s}}\left[\int_{0}^{t}d\tau\right]^{\frac{s-1}{s}}\leq t ^{\frac{s-1}{s}} \left[\int_{0}^{t}|f(\tau)|^sd\tau\right]^{\frac{1}{s}} .
\end{align*}
Thus,
\begin{align}\label{1}
|F(t)|^s&\leq t^{s-1}C\left(\int_{0}^{t}\big(a(\tau)\tau^{2-s}+z(\tau)\tau^{2-s}\big)d\tau\right)\\
&\leq Ct^{s}\big(a(t)t^{2-s}+z(t)t^{2-s}\big)\quad\quad ( \tau a( \tau)\text{ and } \tau z(\tau) \text{ are increasing in } (0,\infty))\\
&\leq C\big(A(|t|)+Z(|t|)\big),\;\;\forall t\geq0
\end{align}
Similarly,
\begin{align*}
|F(t)|^s\leq C\big(A(|t|)+Z(|t|)\big),\;\;\forall t\leq0
\end{align*}
Therefore,
\begin{align*}
|F(t)|^s\leq C\big(A(|t|)+Z(|t|)\big),\;\;\forall t\in\mathbb{ R}
\end{align*}
that is,
\begin{align}\label{01.111}
\int_{\mathbb{R}^{N}} K(x)^{s}|F(u)|^{s}dx\leq C \int_{\mathbb{R}^{N}} K( x)^{s}A(|u|)dx+C  \int_{\mathbb{R}^{N}} K( x)^{s}Z(|u|)dx<\infty,\;\forall u\in E.
\end{align}
By the inequality \eqref{Hardy}, it follows that
{\small\begin{align}
\begin{split}
\left|\int_{\mathbb{R}^{N}}\int_{\mathbb{R}^{N}}\dfrac{K(x)K(y)F(u(x))F(u (y))}{|x|^{\alpha}|x-y|^{\lambda}|y|^{\alpha}}dxdy\right|\leq& \left|\int_{\mathbb{R}^{N}}K(x)^s|F(u)|^sdx\right|^{\frac{2}{s}}\\
\leq& C\left(\int_{\mathbb{R}^{N}} K( x)^{s}A(|u|)dx+\int_{\mathbb{R}^{N}} K( x)^{s}Z(|u|)dx\right)^{\frac{2}{s}}\\
\leq& C\left[\left(\int_{\mathbb{ R}^{N}}K(x)^s A(|u|) dx\right)^{\frac{2}{s}}+\left(\int_{\mathbb{R}^{N}} K( x)^{s}A(|u|)dx\right)^{\frac{2}{s}}\right],
\end{split}
\end{align}}
for every $u\in E$ and $C>0$ is a positive constant that does not depend on $u$. 

Now, consider $u,v\in E$, from \eqref{01.90} we have
\begin{align*}
\tau_{uv}=\int_{\mathbb{R}^{N}} K(x)^s|f(u)|^{s}|v|^{s}dx
\leq C \int_{\mathbb{R}^{N}} K(x)^sa(|u|)|u|^{2-s}|v|^sdx+C\int_{\mathbb{R}^{N}} K(x)^sz(|u|)|u|^{2-s}|v|^sdx.
\end{align*}
Define the functions $H:\mathbb{ R} \longrightarrow[0,\infty) $ and $P:\mathbb{ R} \longrightarrow[0,\infty) $ given by $H(t)=A(|t |^{1/s})$ and $P(t)=Z(|t|^{1/s})$. Through the assumptions imposed under $A$ and $Z$ it is possible to show that $H$ and $P$ are $N$-functions, in addition, the functions $h:(0,\infty)\longrightarrow (0,\infty)$ defined by $h(t)t=\dfrac{1}{s}a(t^{1/s})t^{(2/s) -1}$ and $p( t)t=\dfrac{1}{s}z(t^{1/s})t^{(2/s) -1}$ are increasing and satisfy
\begin{align}
H(w)=\int_{0}^{|w|}th(t)dt,\quad \text{ and }\quad	P(w)=\int_{0}^{|w|}tp(t)dt.\vspace*{-0.6cm}
\end{align}
Since $E$ is continuous embedded in $L^{A}_{K^s}(\mathbb{R}^{N})$ and $L^{Z}_{K^s}(\mathbb{R}^{N})$, we have
\begin{align*}
\int_{\mathbb{R}^{N}} K(x)^{s}\tilde{H} (a(|u|)|u|^{2-s})dx&=\int_{\mathbb{R}^{N}} K(x)^{s}\tilde{H} (|u|^sh(|u|^s))dx\\
&\leq C_1 \int_{\mathbb{R}^{N}} K(x)^{s}{H} (|u|^s)dx= C_1 \int_{\mathbb{R}^{N}} K(x)^{s}A (|u|)dx<+\infty,
\end{align*}
\begin{align*}
\int_{\mathbb{R}^{N}} K(x)^{s}\tilde{P} (z(|u|)|u|^{2-s})dx&=\int_{\mathbb{R}^{N}} K(x)^{s}\tilde{P} (|u|^sp(|u|^s))dx\\
&\leq C_1 \int_{\mathbb{R}^{N}}K(x)^{s}{P} (|u|^s)dx= C_1 \int_{\mathbb{R}^{N}} K(x)^{s}Z (|u|)dx<+\infty,
\end{align*}
\begin{align*}
\int_{\mathbb{R}^{N}} K(x)^{s}{H}  \Big(\dfrac{|v|^s}{\lVert v\lVert_{L^{A}_{K^s}(\mathbb{R}^{N})}^s}\Big)dx= \int_{\mathbb{R}^{N}} K(x)^{s}{A}  \Big(\dfrac{|v|}{\lVert v\lVert_{L^{A}_{K^s}(\mathbb{R}^{N})}} \Big)dx= 1,
\end{align*}
\begin{align*}
\int_{\mathbb{R}^{N}} K(x)^{s}{P}  \Big(\dfrac{|v|^s}{\lVert v\lVert_{L^{Z}_{K^s}(\mathbb{R}^{N})}^s}\Big)dx= \int_{\mathbb{R}^{N}} K(x)^{s}{Z}  \Big(\dfrac{|v|}{\lVert v\lVert_{L^{Z}_{K^s}(\mathbb{R}^{N})}} \Big)dx= 1,
\end{align*}
With this, we conclude that $a(|u|)|u|^{2-s}\in L^{\tilde{H}}_{K^s}(\mathbb{ R}^N)$, $ z(|u|)|u|^{2-s}\in L^{\tilde{P}}_{K^s}(\mathbb{ R}^N)$, $|v|^{s}\in L^{{H}}_{K^s}(\mathbb{ R}^N)$ and $|v|^{s}\in L^{P}_{K^s}(\mathbb{ R}^N)$ . Furthermore,
\begin{align*}
\lVert |v|^s\lVert_{L^{{H}}_{K^s}(\mathbb{ R}^N)}=\lVert v\lVert_{L^{A}_{K^s}(\mathbb{ R}^N)}^s\leq C_2\lVert v\lVert_{ E} ^s
\end{align*}
and
\begin{align*}
\lVert |v|^s\lVert_{L^{{P}}_{K^s}(\mathbb{ R}^N)}=\lVert v\lVert_{L^{Z}_{K^s}(\mathbb{ R}^N)}^s\leq C_2\lVert v\lVert_{ E} ^s
\end{align*}
where $C_i>0$, $i=1,2$, are positives constants that does not depend on $v$. By Proposition \ref{01.41}, it follows that
\begin{align}\label{0.59}
\begin{split}
\tau_{uv}\leq& C_3 \lVert a(|u|)|u|^{2-s}\lVert_{L_{K^s}^{\tilde{H}}(\mathbb{ R}^N)}\lVert|v|^ s \lVert_{L_{K^s}^{{H}}(\mathbb{ R}^N)}+C_3 \lVert Z(|u|)|u|^{2-s}\lVert_{L_{K^s}^{\tilde{P}}(\mathbb{ R}^N)}\lVert|v|^s \lVert_{L_{K^s}^{P}(\mathbb{ R}^N)}\\
\leq& C_4 (\lVert  a(|u|)|u|^{2-s}\lVert_{L_{K^s}^{\tilde{H}}(\mathbb{ R}^N)}\lVert v\lVert_{E}^s+ \lVert  z(|u|)|u|^{2-s}\lVert_{L_{K^s}^{\tilde{P}}(\mathbb{ R}^N)}\lVert v\lVert_{E}^s),
\end{split}
\end{align}
where $C_i>0$, $i=3,4$, are positives constants that does not depend on $u$ and $v$. From \eqref{0.59}, \eqref{01.111} together with the Proposition \ref{Hardy}, it follows that
{\small\begin{align*}
\Big|\int_{\mathbb{R}^{N}}\int_{\mathbb{R}^{N}}\dfrac{K(x)K(y)F(u(x))f(u(y))v(y)}{|x|^{\alpha}|x-y|^{\lambda}|y|^{\alpha}}dxdy\Big|\leq \left|\int_{\mathbb{R}^{N}}W(x)|F(u)|^sdx\right|^{\frac{1}{s}}\left|\int_{\mathbb{R}^{N}}W(x)|f(u)|^s|v|^sdx\right|^{\frac{1}{s}}\\
\leq C_5\left(\int_{\mathbb{R}^{N}} K( x)^{s}A(u)dx+\int_{\mathbb{R}^{N}}   K( x)^{s}Z(u)dx\right)^{\frac{1}{s}} \big(\lVert  a(|u|)|u|^{2-s}\lVert_{L_{K^s}^{\tilde{H}}(\mathbb{ R}^N)}+ \lVert z(|u|)|u|^{2-s}\lVert_{L_{K^s}^{\tilde{P}}(\mathbb{ R}^N)}\big)^{\frac{1}{s}}\lVert v\lVert_{E}.
\end{align*}}

\qed

\begin{lemma}\label{5.00}Assume that $(V,K)\in \mathcal{K}_1$ and $(f_1)$ holds. Let $(u_n)$ be a bounded sequence in $E$, and consider $u\in E$ such that $u_n\xrightharpoonup[\quad]{\ast} u$ in $E$. We will show the following limits
	\begin{align}\label{11.13}
	\lim_{n\rightarrow\infty}\int_{\mathbb{R}^{N}}K(x)^s|F(u_n)-F(u)|^{s}dx=0,
	\end{align}
	\begin{align}\label{11.14}
	\lim_{n\rightarrow\infty}\int_{\mathbb{R}^{N}}K(x)^s|f(u_n)u_n-f(u)u|^{s}dx=0
	\end{align}
	and
	\begin{align}\label{5.8}
	\lim_{n\rightarrow\infty}\int_{\mathbb{R}^{N}}K(x)^s|f(u_n)\varphi-f(u)\varphi|^{s}dx=0 .
	\end{align}
\end{lemma}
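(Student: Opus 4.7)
All three convergences follow a common template: extract an almost-everywhere convergent subsequence, split the integral into a ball $B_R$ and its complement, apply $(K_1)$ to the tail, and close the interior via Vitali's convergence theorem. I sketch the argument for \eqref{11.13} in detail and indicate the minor changes required for \eqref{11.14} and \eqref{5.8}.

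\emph{Step 1 (a.e. convergence).} Since $(u_n)$ is bounded in $E$ and $E$ is compactly embedded in $L^{\Phi}_{loc}(\mathbb{R}^N)$, a diagonal extraction together with Lemma~\ref{1.0} yields a (non-relabelled) subsequence with $u_n(x)\to u(x)$ for a.e.\ $x\in\mathbb{R}^N$. Continuity of $F$ and $f$ then forces each of the three integrands to vanish pointwise almost everywhere.

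\emph{Step 2 (tail control through $(K_1)$).} Fix $\varepsilon>0$. Feeding the $\varepsilon$-form \eqref{11.0'} of the growth hypothesis into the integration used in the proof of Lemma~\ref{01.290} produces a splitting of the shape
\[
|F(t)|^{s}\leq \varepsilon\,C\bigl(A(|t|)+\Phi_{*}(|t|)\bigr)+C_{\varepsilon}\,\chi_{\{|t|\geq\delta_{0}\}}(t)\,w(|t|),
\]
where the residual $w$ is dominated on $\{|t|\geq\delta_0\}$ by a constant multiple of $1+\Phi_{*}(|t|)$. Integrating against $K^{s}$ on $B_{R}^{c}$ and invoking Proposition~\ref{01.41} together with the continuous embedding $E\hookrightarrow L^{\Phi_{*}}(\mathbb{R}^{N})$ bounds the $\varepsilon$-term by $\tilde{C}\varepsilon$, uniformly in $n$. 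The residual piece is supported on $A_{n}:=\{|u_n|\geq\delta_{0}\}$, and Markov's inequality gives $|A_{n}|\leq \Phi_{*}(\delta_{0})^{-1}\int\Phi_{*}(|u_n|)\,dx\leq C$, uniformly in $n$. Hypothesis $(K_{1})$ then implies
\[
\int_{B_{R}^{c}\cap A_{n}}K(x)^{s}\,dx\longrightarrow 0\quad\text{uniformly in }n\ \text{as }R\to\infty.
\]
The same estimates apply to $u$, so for $R=R(\varepsilon)$ large enough,
\[
\int_{B_{R}^{c}}K^{s}|F(u_n)-F(u)|^{s}\,dx\leq C\varepsilon \quad\text{uniformly in }n.
\]

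\emph{Step 3 (local Vitali argument).} With $R$ fixed, $K$ is bounded on $B_{R}$, so it suffices to show $\int_{B_{R}}|F(u_n)-F(u)|^{s}\,dx\to 0$. The same bound yields equi-integrability on $B_{R}$: for any measurable $E\subset B_{R}$ and $\sigma>0$,
\[
\int_{E}|F(u_n)|^{s}\,dx\leq \sigma\,C\Bigl(\textstyle\int_{B_{R}}A(|u_n|)\,dx+\int_{B_{R}}\Phi_{*}(|u_n|)\,dx\Bigr)+C_{\sigma}\,|E|,
\]
and by Remark~\ref{5.02}, the strong convergence $u_n\to u$ in $L^{\Phi}(B_R)$ and the global $L^{\Phi_{*}}$-bound, the two integrals on the right are uniformly bounded in $n$. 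Combined with the a.e.\ convergence of Step~1, Vitali's convergence theorem delivers $\int_{B_{R}}|F(u_n)-F(u)|^{s}\,dx\to 0$, which together with Step~2 proves \eqref{11.13}.

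\emph{Step 4 (the remaining limits).} For \eqref{11.14}, multiplying \eqref{11.0'} by $|t|^{s}$ produces an estimate of $|f(t)t|^{s}$ structurally identical to the one used for $|F(t)|^{s}$ (note for instance that $a(|t|)|t|^{2-s}\cdot|t|^{s}=a(|t|)|t|^{2}\leq C A(|t|)$), after which the Steps 2--3 template applies verbatim. For \eqref{5.8} one writes the integrand as $K^{s}|\varphi|^{s}|f(u_n)-f(u)|^{s}$ and repeats the same tail/Vitali argument, using the bound \eqref{11.0'} directly on $|f|^{s}$ with $|\varphi|^{s}$ acting as a fixed weight. The fixed factor $|\varphi|^{s}$ makes the tail piece strictly easier, because the residual term is confined both to $A_n$ and to the support of $|\varphi|^{s}$, on which $(K_1)$ again produces the required smallness.

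\emph{Expected main obstacle.} The delicate point is Step~2: one must exhibit a set of uniformly bounded Lebesgue measure on which $(K_{1})$ can be invoked. The asymmetric truncation $\chi_{[\delta_{0},\delta_{1}]}$ built into \eqref{11.0'} is tailor-made for this, separating the portion of $f$ controlled by $A+\Phi_{*}$ (made small by the $\varepsilon$ factor) from a moderate-amplitude residual confined to level sets of uniformly finite measure. This is the structural reason why the growth hypothesis $(f_1)$ and the far-field condition $(K_{1})$ are phrased the way they are.
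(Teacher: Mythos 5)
Your argument is correct, and its far-field half coincides with the paper's: both proofs isolate the residual part of the growth estimate on the level sets $\{|u_n|\ge \delta_0\}$, bound their Lebesgue measure uniformly via $\Phi_*(\delta_0)|\{|u_n|\ge\delta_0\}|\le \int_{\mathbb{R}^N}\Phi_*(|u_n|)dx\le M_1$ (your Markov inequality is exactly the paper's estimate for the sets $F_n$), invoke $(K_1)$ there, and absorb the $\varepsilon$-part through the uniform bounds on $\int_{\mathbb{R}^N} K^sA(|u_n|)dx$ and $\int_{\mathbb{R}^N}\Phi_*(|u_n|)dx$ furnished by Proposition \ref{01.41} and the Sobolev embedding. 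Where you genuinely diverge is the interior step: the paper first applies the Br\'ezis--Lieb lemma to reduce \eqref{11.13} to the convergence of $\int K^s|F(u_n)|^sdx$ and then treats the ball $B_{r_0}$ with the Strauss compactness lemma, whereas you work directly with $|F(u_n)-F(u)|^s$ on $B_R$ and conclude by equi-integrability plus a.e.\ convergence via Vitali's theorem. Your route is more self-contained (no Br\'ezis--Lieb, no Strauss) and delivers the stated limits in one stroke; the paper's route trades explicit verification of uniform integrability for two off-the-shelf lemmas. Two points to tighten. First, in Step 2 the bound you state for the residual, $w(|t|)\le C\big(1+\Phi_*(|t|)\big)$ on $\{|t|\ge\delta_0\}$, is too weak to justify your conclusion as written: the term $C_\varepsilon\int_{B_R^c\cap A_n}K^s\Phi_*(|u_n|)dx$ is only bounded, not small, and $C_\varepsilon$ is large. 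What the argument needs --- and what the truncation $\chi_{[\delta_0,\delta_1]}$ in \eqref{11.0'} actually provides, as your closing paragraph indicates you know --- is that the residual is bounded by the \emph{constant} $C_\varepsilon\Phi_*(\delta_1)$ on its support, so that the smallness comes from $(K_1)$ alone; this is precisely how the paper uses \eqref{11.01}. Second, since the a.e.\ convergence in Step 1 holds only along a subsequence, you should add the standard remark that it suffices to show every subsequence has a further subsequence along which the three integrals tend to zero (the paper leaves the same step implicit). With these two clarifications your proof is complete and constitutes a valid, somewhat more elementary alternative to the paper's interior argument.
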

\begin{proof} By remark \ref{5.02}, $\displaystyle\limsup_{t \rightarrow 0}\frac{a(t)t^{2-s}}{\phi(t)t^{2-s}}\leq 1$ and $\displaystyle\limsup_{t \rightarrow \infty}\frac{z(t)t^{2-s}}{\phi_*(t)t^{2-s}}\leq 1$, then from $(f_1)$, given $\varepsilon>0$ there exist $\delta_0>0$, $\delta_1>0$ and $C_{\varepsilon}>0$ such that
	\begin{align}\label{11.011}
	|f(t)|^s\leq \varepsilon\Big(\phi(t)t^{2-s}+\phi_{*}(t)t^{2-s}\Big)+{C_{\varepsilon}}\phi_{*}(t)t^{2-s}\chi_{[\delta_0,\delta_1]}(t).
	\end{align}
	In the same way as \eqref{1}
	\begin{align}\label{11.01}
|F(t)|^s\leq \varepsilon\Big(\frac{a_2}{s}\Phi(t)+\frac{m^*}{s}\Phi_{*}(t)\Big)+\frac{C_{\varepsilon}m^*}{s}\Phi_{*}(t)\chi_{[\delta_0,\delta_1]}(t)
\end{align}
	From \eqref{11.01}, Proposition \ref{01.41} and the Sobolev inequality, it follows that the sequence ${(K(\cdot)F(u_n))}$ is bounded in $L^s(\mathbb{ R}^N)$. It is clear that $K(x)F(u_n(x)) \rightarrow K(x)F(u(x))$ a.e. in $\mathbb{ R}^N$ in the sense of subsequence. Then, by the Brézis-Lieb lemma [\citenum{Willem}, Lemma 1.32] we obtain
		\begin{align}\label{5.0}
	\int_{\mathbb{R}^{N}}K(x)^s|F(u_n)-F(u)|^{s}dx=\int_{\mathbb{R}^{N}}K(x)^s|F(u_n)|^{s}dx-\int_{\mathbb{R}^{N}}K(x)^s|F(u)|^{s}dx+o_n(1).
	\end{align}
	In view of this fact, to verify \eqref{11.13}, we only need to prove that the right side of \eqref{5.0} belongs to $o_n(1)$. 	Note that $F_n = \{x \in \mathbb{R}^N : |v_n(x)| \geq \delta_0\}$ is such that
	\begin{align*}
	\Phi_{*}(\delta_0)|F_n|\leq \int_{F_n}\Phi_{*}(|v_n(x)|)dx\leq \int_{\mathbb {R}^{N}} \Phi_{*}(|v_n(x)|)dx\leq C_1,
	\end{align*}
	for some constant $C_1>0$ that does not depend on $n$. Thus, $\displaystyle\sup_{n\in\mathbb{N}} |F_n|<+\infty$. From $(K_1)$, we have
	\begin{align*}
	\lim_{r\rightarrow+\infty}\int_{F_n\cap B_r^c (0)} K(x)^sdx=0, \;\text{ uniformly in } n\in\mathbb{N},
	\end{align*}
	thus, there is $r_0>0$, so that
	\begin{align*}
	\int_{F_n\cap B_{{r_{0}}}^c (0)} K(x)^sdx<\dfrac{\varepsilon}{\Phi_{*}(\delta_1)C_\varepsilon}, \;\;\;\forall n\in\mathbb{N}.
	\end{align*}
	Moreover, as $(v_{n})$ is bounded in $E$, there is a constant $M_1>0$ satisfying
	\begin{align*}
	\int_{\mathbb{R}^{N}}V(x)\Phi(|u_{n}|)dx\leq M_1\;\;\;\;\text{ and }\;\;\;\;
	\int_{\mathbb{R}^{N}}\Phi_{*}(|u_{n}|)dx\leq M_1,\;\;\;\;\forall n\in\mathbb{N}.
	\end{align*}
	By \eqref{11.1}, it follows that 
	\begin{align*}
	\int_{B_{r_0}^c ( 0)}K(x)^s|F(u_n)|^sdx\leq& \varepsilon C_1\left( \int_{B_{r_0}^c ( 0)}V(x)\Phi(|u_n|)dx+\int_{B_{r_0}^c ( 0)}\Phi_{*}(|u_n|)dx\right)\\
	&+ C_\varepsilon\Phi_{*}(\delta_1)\int_{F_n\cap B_{r_0}^c ( 0)}K(x)^sdx\\
	\leq &\varepsilon (C_1 M_1+1),
	\end{align*}
	for all $n\in\mathbb{N}$ where $C_1>0$ does not depend of $\varepsilon>0$. Therefore 
	\begin{align}\label{6.2}
	\limsup_{n\rightarrow+\infty} \int_{B_{r_0}^c (0)}K(x)^s|F(u_{n})|^sdx\leq \varepsilon (C_1 M_1+1).
	\end{align}
	
	On the other hand, using the compactness lemma of Strauss [\citenum{Strauss}, Theorem A.I, p. 338], it follows that
	\begin{align}\label{6.3}
	\lim_{n\rightarrow+\infty} \int_{B_{r_0} (0)}K(x)^s|F(u_{n})|^sdx=\int_{B_{r_0} (0)}K(x)^s|F(u)|^sdx.
	\end{align}
In light of this, we can conclude that
\begin{align*}
\lim_{n\rightarrow+\infty}\int_{\mathbb{R}^{N}}K(x)^s|F(u_{n})|^sdx=\int_{\mathbb{R}^{ n}}K(x)^s|F (u)|^sdx.
\end{align*}
Through this limit together with \eqref{5.0}, we will get \eqref{11.13}. Similarly, the limit \eqref{11.14} is shown. Related the limit \eqref{5.8}, it follows directly from the condition $(f_1)$ together with a version of the compactness lemma of Strauss for non-autonomous problem.
	
\end{proof}

The following result is an immediate consequence of Stein–Weiss inequality and Lemma \ref{5.00}.
\begin{lemma}\label{01.40'} Assume that $(V,K)\in \mathcal{K}_1$ and $(f_1)$ holds.
	Let $(u_n)$ be a sequence bounded in $E$ and $u\in E$ such that $u_{n}\xrightharpoonup[\quad]{\ast} u$ in $D^{1,\Phi}(\mathbb{R}^{N})$. Then
	{\footnotesize\begin{align}\label{1.15'}
		\lim_{n\rightarrow\infty} \int_{\mathbb{R}^{N}}\int_{\mathbb{R}^{N}}\dfrac{K(x)K(y)F(u_n(x))F(u_n(y))}{|x|^{\alpha}|x-y|^{\lambda}|y|^{\alpha}}dxdy= \int_{\mathbb{R}^{N}}\int_{\mathbb{R}^{N}}\dfrac{K(x)K(y)F(u(x))F(u(y))}{|x|^{\alpha}|x-y|^{\lambda}|y|^{\alpha}}dxdy,
		\end{align}}
	{\footnotesize\begin{align}\label{1.16'}
		\lim_{n\rightarrow\infty} \int_{\mathbb{R}^{N}}\int_{\mathbb{R}^{N}}\dfrac{K(x)K(y)F(u_n(x))f(u_n(y))u_n(y)}{|x|^{\alpha}|x-y|^{\lambda}|y|^{\alpha}}dxdy= \int_{\mathbb{R}^{N}}\int_{\mathbb{R}^{N}}\dfrac{K(x)K(y)F(u(x))f(u(y))u(y)}{|x|^{\alpha}|x-y|^{\lambda}|y|^{\alpha}}dxdy
		\end{align}}
	and
	{\footnotesize\begin{align}\label{5.9}
		\lim_{n\rightarrow\infty} \int_{\mathbb{R}^{N}}\int_{\mathbb{R}^{N}}\dfrac{K(x)K(y)F(u_n(x))f(u_n(y))\varphi(y)}{|x|^{\alpha}|x-y|^{\lambda}|y|^{\alpha}}dxdy= \int_{\mathbb{R}^{N}}\int_{\mathbb{R}^{N}}\dfrac{K(x)K(y)F(u(x))f(u(y))\varphi(y)}{|x|^{\alpha}|x-y|^{\lambda}|y|^{\alpha}}dxdy
		\end{align}}
	for all $\varphi\in C^{\infty}_0(\mathbb{ R}^N)$.
\end{lemma}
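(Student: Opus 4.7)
The plan is to view each integral as an $L^s$–$L^s$ duality pairing through the Riesz potential and reduce everything to the three norm convergences already established in Lemma \ref{5.00}. Since $1/s+\lambda/N+1/s=2$ by the choice $s=2N/(2N-\lambda)$, Proposition \ref{Hardy} yields, for any measurable $G_1,G_2$ belonging to $L^s(\mathbb{R}^N)$,
\begin{equation*}
\left|\int_{\mathbb{R}^N}\int_{\mathbb{R}^N}\frac{G_1(x)G_2(y)}{|x-y|^\lambda}\,dx\,dy\right|\le C\,\|G_1\|_{L^s}\|G_2\|_{L^s}.
\end{equation*}
I would apply this with $G_1 = K F(u_n)-KF(u)$ and various second arguments, after first checking that the relevant functions indeed lie in $L^s(\mathbb{R}^N)$: for $KF(u_n)$ and $KF(u)$ this follows from estimate \eqref{01.111} in the proof of Lemma \ref{01.290}; for $Kf(u_n)u_n$ and $Kf(u_n)\varphi$ one argues analogously using \eqref{11.0'}, the embedding $E\hookrightarrow L^A_{K^s}(\mathbb{R}^N)\cap L^{\Phi_*}(\mathbb{R}^N)$ from Proposition \ref{01.41}, and either the boundedness of $(u_n)$ in $E$ or the fact that $\varphi\in C_0^\infty(\mathbb{R}^N)$.

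For the first limit \eqref{1.15'}, I would write
\begin{equation*}
K(x)K(y)\bigl[F(u_n(x))F(u_n(y))-F(u(x))F(u(y))\bigr]=K(x)[F(u_n(x))-F(u(x))]\,K(y)F(u_n(y))+K(x)F(u(x))\,K(y)[F(u_n(y))-F(u(y))],
\end{equation*}
then apply Hardy--Littlewood--Sobolev to each piece to obtain a bound
\begin{equation*}
C\,\|K(F(u_n)-F(u))\|_{L^s}\bigl(\|KF(u_n)\|_{L^s}+\|KF(u)\|_{L^s}\bigr),
\end{equation*}
which tends to zero by \eqref{11.13} together with the uniform bound on $\|KF(u_n)\|_{L^s}$ inherited from the boundedness of $(u_n)$ in $E$. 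The second limit \eqref{1.16'} is handled by the exact same splitting with $F(u_n(y))$ replaced by $f(u_n(y))u_n(y)$, so that the resulting bound becomes
\begin{equation*}
C\,\|K(F(u_n)-F(u))\|_{L^s}\|Kf(u_n)u_n\|_{L^s}+C\,\|KF(u)\|_{L^s}\|K(f(u_n)u_n-f(u)u)\|_{L^s},
\end{equation*}
whose two summands vanish by \eqref{11.13} and \eqref{11.14} respectively. The third limit \eqref{5.9} is identical in structure, now with $u_n,u$ replaced by $\varphi$ in the second factor; the decisive ingredient is \eqref{5.8}, which gives $\|K(f(u_n)\varphi-f(u)\varphi)\|_{L^s}\to 0$, while the first factor $\|K(F(u_n)-F(u))\|_{L^s}$ tends to zero by \eqref{11.13}.

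No genuine obstacle is expected; the only point requiring some care is verifying that each factor appearing as $G_1$ or $G_2$ in the Hardy--Littlewood--Sobolev pairing is uniformly bounded in $L^s(\mathbb{R}^N)$, which is why the growth bound \eqref{11.0'} on $f$ and the embedding from Proposition \ref{01.41} must be invoked for $\|Kf(u_n)u_n\|_{L^s}$ and $\|Kf(u_n)\varphi\|_{L^s}$. Once this uniform control is in place, the three convergences follow immediately from Lemma \ref{5.00} and the proof is essentially one application of the triangle inequality.
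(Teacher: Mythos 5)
Your proposal is correct and follows exactly the route the paper intends: the paper states the lemma as an immediate consequence of the Hardy--Littlewood--Sobolev inequality (with $2/s+\lambda/N=2$) and the $L^s$-convergences \eqref{11.13}, \eqref{11.14}, \eqref{5.8} of Lemma \ref{5.00}, which is precisely your bilinear splitting plus the uniform $L^s$ bounds drawn from \eqref{11.0'}, \eqref{01.111} and Proposition \ref{01.41}. Your write-up simply makes explicit the details the paper leaves to the reader.
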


Now, suppose that the function $f$ satisfies the condition $(f_5)$. Given $\varepsilon>0$ there exists $\delta_0>0$, $\delta_1>0$ and $C_{\varepsilon}>0$ such that
\begin{align}\label{5.4}
|f(t)|^s\leq Cb(t)t^{2-s}+\frac{\varepsilon}{s}\phi_{*}(t)t^{2-s}+C_{\varepsilon}\phi_{*}(t)t^{2-s}\chi_{[\delta_0,\delta_1]}(t), \;\;\;\;\forall t>0
\end{align}
where $C>0$ is a constant that does not depend of $\varepsilon>0$.
Repeating the same arguments used in the proof of Lemma \ref{01.290}, we can state the following result.

\begin{lemma}\label{01.291}
	Suppose that $(V,K)\in \mathcal{K}_2$ and $(f_5)$ holds. For each $u\in E$, there is a constant $C_1>0$ that does not depend on $u$, such that
	\begin{align*}
	\left|	\int_{\mathbb{R}^{N}}\int_{\mathbb{R}^{N}}\dfrac{K(x)K(y)F(u(x))F(u(y))}{|x|^{\alpha}|x-y|^{\lambda}|y|^{\alpha}}dxdy\right|\leq C_1\left[\left(\int_{\mathbb{ R}^{N}}K(x)^s B(|u|) dx\right)^{\frac{2}{s}}+\left(\int_{\mathbb{ R}^{N}} \Phi_{*}(|u|)dx\right)^{\frac{2}{s}}\right].
	\end{align*}
	Furthermore, for each $u\in E$ there is a constant $C_2>0$, which does not depend on $u$, such that
	\begin{align}\label{5.1}
	\left|\int_{\mathbb{R}^{N}}\int_{\mathbb{R}^{N}}\dfrac{K(x)K(y)F(u(x))f(u(y))v(y)}{|x|^{\alpha}|x-y|^{\lambda}|y|^{\alpha}}dxdy\right|\leq C\lVert v\lVert_{E},\;\;\forall v\in E.
	\end{align}
\end{lemma}

\begin{lemma}
	Suppose that $(V,K)\in \mathcal{K}_2$ and $(f_5)$ holds.
	Let $(u_n)$ be a bounded sequence in $E$, and consider $u\in E$ such that $u_n\xrightharpoonup[\quad]{\ast} u$ in $E$. We will show the following limits
	\begin{align}\label{11.13'}
	\lim_{n\rightarrow\infty}\int_{\mathbb{R}^{N}}K(x)^s|F(u_n)-F(u)|^{s}dx=0,
	\end{align}
	\begin{align}\label{11.14'}
	\lim_{n\rightarrow\infty}\int_{\mathbb{R}^{N}}K(x)^s|f(u_n)u_n-f(u)u|^{s}dx=0
	\end{align}
		and
	\begin{align}\label{5.8'}
	\lim_{n\rightarrow\infty}\int_{\mathbb{R}^{N}}K(x)^s|f(u_n)\varphi-f(u)\varphi|^{s}dx=0 .
	\end{align}
\end{lemma}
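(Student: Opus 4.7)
The plan is to mirror, step by step, the proof of Lemma \ref{5.00}, swapping every occurrence of the $N$-function $A$ for the $N$-function $B$, invoking \eqref{5.4} in place of \eqref{11.0'}, and appealing to the continuous embedding $E\hookrightarrow L^B_{K^s}(\mathbb{R}^N)$ from the $\mathcal{K}_2$-part of Proposition \ref{01.41} instead of the $\mathcal{K}_1$-part. Integrating \eqref{5.4} in the same manner as in Lemma \ref{01.290} yields the pointwise bound
\[
|F(t)|^s\leq C\bigl(B(|t|)+\Phi_*(|t|)\bigr),\qquad t\in\mathbb{R},
\]
which, combined with Proposition \ref{01.41} and $u_n$ bounded in $E$, shows that $\bigl(K^s|F(u_n)|^s\bigr)$ is bounded in $L^1(\mathbb{R}^N)$. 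Since passing to a subsequence gives $u_n\to u$ a.e., the Br\'ezis--Lieb lemma then produces the splitting
\begin{equation*}
\int_{\mathbb{R}^N}K(x)^s|F(u_n)-F(u)|^s\,dx=\int_{\mathbb{R}^N}K(x)^s|F(u_n)|^s\,dx-\int_{\mathbb{R}^N}K(x)^s|F(u)|^s\,dx+o_n(1),
\end{equation*}
so the first limit \eqref{11.13'} will follow once the two mass integrals are shown to agree in the limit.

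To handle the masses I would split $\mathbb{R}^N=B_{r_0}(0)\cup B_{r_0}^c(0)$. For the interior ball, the Orlicz--Sobolev compact embedding into $L^\Phi_{\mathrm{loc}}(\mathbb{R}^N)$ together with the compactness lemma of Strauss (applied to the continuous superposition operator induced by the bound \eqref{5.4} restricted to the middle range $[\delta_0,\delta_1]$) gives
\[
\int_{B_{r_0}(0)}K(x)^s|F(u_n)|^s\,dx\longrightarrow\int_{B_{r_0}(0)}K(x)^s|F(u)|^s\,dx.
\]
For the exterior, I would define the level sets $F_n=\{x:|u_n(x)|\geq\delta_0\}$; since $\Phi_*(\delta_0)|F_n|\leq\int\Phi_*(|u_n|)\,dx$ is uniformly bounded, $\sup_n|F_n|<\infty$, and condition $(K_1)$ makes $\int_{F_n\cap B_{r_0}^c(0)}K(x)^s\,dx$ uniformly small once $r_0$ is taken large, while the $\varepsilon$-small parts of \eqref{5.4} are controlled by the uniform $E$-bounds through Proposition \ref{01.41} and the Sobolev embedding into $L^{\Phi_*}$. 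This gives $\limsup_n\int_{B_{r_0}^c}K^s|F(u_n)|^s\,dx\lesssim\varepsilon$ and closes \eqref{11.13'}.

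For \eqref{11.14'} I would repeat the argument verbatim with $F(t)$ replaced by $f(t)t$, noting that \eqref{5.4} also forces $|f(t)t|^s\leq C(B(|t|)+\Phi_*(|t|))$ by the same monotonicity argument as in Lemma \ref{01.290}. For \eqref{5.8'} with a fixed $\varphi\in C_0^\infty(\mathbb{R}^N)$, the support of $\varphi$ is compact, so Strauss' lemma applied to the superposition $u\mapsto K^s|f(u)|^s|\varphi|^s$ on $\mathrm{supp}\,\varphi$ delivers the convergence directly, without needing the $(K_1)$ tail estimate.

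The main obstacle I anticipate is the exterior tail bound: because $(f_5)$ only asserts $\limsup$ (not $\lim$) at the origin, the coefficient in front of $b(|t|)|t|^{2-s}$ in \eqref{5.4} cannot be made arbitrarily small, so one must be careful to absorb the $b$-term using the embedding $E\hookrightarrow L^B_{K^s}$ rather than by the usual $\varepsilon$-argument. Once one observes that the uniform $E$-bound together with $(K_1)$ still yields equi-integrability of $K^s B(|u_n|)$ outside large balls, the remainder of the proof is identical to that of Lemma \ref{5.00}.
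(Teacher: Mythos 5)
There is a genuine gap, and it sits exactly at the point you flag as the ``main obstacle''. Your overall skeleton (Br\'ezis--Lieb splitting, Strauss' compactness lemma on $B_{r_0}(0)$, the level sets $F_n$ and $(K_1)$ for the middle range $[\delta_0,\delta_1]$, and $|F(t)|^s\leq C(B(|t|)+\Phi_*(|t|))$ via Lemma \ref{01.291}) coincides with the paper's proof. But your resolution of the obstacle is not correct: since $(f_5)$ only gives $\limsup_{t\to0}f(t)/\big(\tfrac1s b(|t|)|t|^{2-s}\big)^{1/s}<\infty$, the coefficient of the $b$-term is a \emph{fixed} constant, and on the exterior region the troublesome contribution is $C\int_{B_{r_0}^c\cap\{|u_n|\leq\delta_0\}}K(x)^sB(|u_n|)\,dx$. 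You claim that the uniform bound on $\int_{\mathbb{R}^N}K^sB(|u_n|)\,dx$ (from Proposition \ref{01.41}) together with $(K_1)$ yields equi-integrability of $K^sB(|u_n|)$ outside large balls. That does not follow: $(K_1)$ only controls $\int_{A_n\cap B_r^c}K^s\,dx$ over sets $A_n$ of \emph{uniformly bounded measure}, whereas $\{|u_n|\leq\delta_0\}$ has infinite measure in general; and a uniform bound on the total mass $\int K^sB(|u_n|)\,dx$ gives no uniform smallness of its tails. So as written, the exterior estimate $\limsup_n\int_{B_{r_0}^c}K^s|F(u_n)|^s\,dx\lesssim\varepsilon$ is unproved.

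The missing ingredient is the hypothesis $(K_3)$, which defines $\mathcal{K}_2$ and is never used in your argument. From $(K_3)$, for every $\varepsilon>0$ there is $r_0>0$ such that
\begin{align*}
K(x)^sB(t)\leq\varepsilon\big(V(x)\Phi(t)+\Phi_*(t)\big),\qquad \forall\, t>0,\ |x|\geq r_0,
\end{align*}
and therefore
\begin{align*}
\int_{B_{r_0}^c(0)}K(x)^sB(|u_n|)\,dx\leq\varepsilon\left(\int_{\mathbb{R}^N}V(x)\Phi(|u_n|)\,dx+\int_{\mathbb{R}^N}\Phi_*(|u_n|)\,dx\right)\leq\varepsilon M_1,
\end{align*}
by the boundedness of $(u_n)$ in $E$. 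This is precisely how the paper closes the exterior estimate (its displays \eqref{01.7} and \eqref{44.3}); the $\Phi_*$-terms carry a genuine $\varepsilon$ from the limit at infinity in $(f_5)$, and the middle range is handled by $(K_1)$ as in Lemma \ref{5.00}. Inserting this $(K_3)$-based tail bound where you currently appeal to $(K_1)$ plus the $E$-bound repairs the proof; the treatment of \eqref{11.14'} and \eqref{5.8'} is then as you describe.
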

\begin{proof}
	Due to similarity, it suffices to verify \eqref{11.13'}. 
	By \eqref{5.4}, for any $\varepsilon>0$ there exists $\delta_0>0$, $\delta_1>0$ and $C_{\varepsilon}>0$ such that
	\begin{align}\label{11.1'}
	|F(t)|^s\leq \varepsilon\Big(\frac{b_2}{s}B(t)+\frac{m^*}{s}\Phi_{*}(t)\Big)+\frac{C_{\varepsilon}m^*}{s}\Phi_{*}(t)\chi_{[\delta_0,\delta_1]}(t)
	\end{align}
	By the condition $(K_3)$, there is $r_0>0$ sufficiently large satisfying
	\begin{align*}
	K(x)B(t)\leq \varepsilon\left( V(x){\Phi(t)}+{\Phi_{*}(t)}\right),\;\;\;\;\forall t>0\;\text{ and }\;|x|\geq r_0.
	\end{align*}
	From the above inequalities, we have
	\begin{align}\label{44.3}
	K(x)F(t)\leq  \varepsilon C_{1}V(x)\Phi(t)+\varepsilon C_2\Phi_{*}(t)+C_\varepsilon K(x)\Phi_{*}(\delta_1)\chi_{[\delta_0,\delta_1]}(t),\;\forall t>0\text{ and }|x|\geq r_0.
	\end{align}

	From \eqref{11.1'}, Lemma \ref{01.41} and the Sobolev inequality, it follows that the sequence ${(K(\cdot)F(u_n))}$ is bounded in $L^s(\mathbb{ R}^N)$. It is clear that $K(x)F(u_n(x)) \rightarrow K(x)F(u(x))$ a.e. in $\mathbb{ R}^N$ in the sense of subsequence. Then, by the Brézis-Lieb lemma [\citen{Willem}, Lemma 1.32] we obtain
	\begin{align}\label{5.0'}
	\int_{\mathbb{R}^{N}}K(x)^s|F(u_n)-F(u)|^{s}dx=\int_{\mathbb{R}^{N}}K(x)^s|F(u_n)|^{s}dx-\int_{\mathbb{R}^{N}}K(x)^s|F(u)|^{s}dx+o_n(1)
	\end{align}
	In view of this fact, to verify \eqref{11.13'}, we only need to prove that the right side of \eqref{5.0'} belongs to $o_n(1)$. Repeating the same arguments used in the proof of Lemma \ref{5.00}, it follows that	\begin{align}\label{6.2'}
	\limsup_{n\rightarrow+\infty} \int_{B_{r_0}^c (0)}K(x)^s|F(u_{n})|^sdx\leq \varepsilon (C_1 M_1+1).
	\end{align}On the other hand, using $(f_5)$ and the compactness lemma of Strauss [\citenum{Strauss}, Theorem A.I, p. 338], it is guaranteed that
	\begin{align}\label{6.3'}
	\lim_{n\rightarrow+\infty} \int_{B_{r_0} (0)}K(x)^s|F(u_{n})|^sdx=\int_{B_{r_0} (0)}K(x)^s|F(u)|^sdx.
	\end{align}
	In light of this, we can conclude that
	\begin{align*}
	\lim_{n\rightarrow+\infty}\int_{\mathbb{R}^{N}}K(x)^s|F(u_{n})|^sdx=\int_{\mathbb{R}^{ n}}K(x)^s|F (u)|^sdx.
	\end{align*}
Through this limit together with \eqref{5.0'}, we will get \eqref{11.13'}. Similarly, we show the limit \eqref{11.14'}. Related the limit \eqref{5.8'}, it follows directly from the condition $(f_5)$ together with a version of the compactness lemma of Strauss for non-autonomous problem.
	
\end{proof}

\begin{corollary}\label{5.2}  Assume that $(V,K)\in \mathcal{K}_2$ and $(f_5)$ holds.
	Let $(u_n)$ be a sequence bounded in $E$ and $u\in E$ such that $u_{n}\xrightharpoonup[\quad]{\ast} u$ in $D^{1,\Phi}(\mathbb{R}^{N})$. Then
	{\footnotesize\begin{align}\label{5.3}
		\lim_{n\rightarrow\infty} \int_{\mathbb{R}^{N}}\int_{\mathbb{R}^{N}}\dfrac{K(x)K(y)F(u_n(x))F(u_n(y))}{|x|^{\alpha}|x-y|^{\lambda}|y|^{\alpha}}dxdy= \int_{\mathbb{R}^{N}}\int_{\mathbb{R}^{N}}\dfrac{K(x)K(y)F(u(x))F(u(y))}{|x|^{\alpha}|x-y|^{\lambda}|y|^{\alpha}}dxdy,
		\end{align}}
	{\footnotesize\begin{align}\label{11.113'}
		\lim_{n\rightarrow\infty} \int_{\mathbb{R}^{N}}\int_{\mathbb{R}^{N}}\dfrac{K(x)K(y)F(u_n(x))f(u_n(y))u_n(y)}{|x|^{\alpha}|x-y|^{\lambda}|y|^{\alpha}}dxdy= \int_{\mathbb{R}^{N}}\int_{\mathbb{R}^{N}}\dfrac{K(x)K(y)F(u(x))f(u(y))u(y)}{|x|^{\alpha}|x-y|^{\lambda}|y|^{\alpha}}dxdy
		\end{align}}
	and
{\footnotesize\begin{align}\label{5.10}
	\lim_{n\rightarrow\infty} \int_{\mathbb{R}^{N}}\int_{\mathbb{R}^{N}}\dfrac{K(x)K(y)F(u_n(x))f(u_n(y))\varphi(y)}{|x|^{\alpha}|x-y|^{\lambda}|y|^{\alpha}}dxdy= \int_{\mathbb{R}^{N}}\int_{\mathbb{R}^{N}}\dfrac{K(x)K(y)F(u(x))f(u(y))\varphi(y)}{|x|^{\alpha}|x-y|^{\lambda}|y|^{\alpha}}dxdy
	\end{align}}
for all $\varphi\in C^{\infty}_0(\mathbb{ R}^N)$.
\end{corollary}

By the inequalities \eqref{01.90} and \eqref{5.4}, together with all the results presented above, it is verified that the function
\begin{align*}
\Psi(u)=\int_{\mathbb{R}^{N}}\int_{\mathbb{R}^{N}}\dfrac{K(x)K(y)F(u_n(x))F(u_n(y))}{|x|^{\alpha}|x-y|^{\lambda}|y|^{\alpha}}dxdy,\;\;u\in E
\end{align*}
is well defined, is continuously differentiable and the Gateaux derivative $\Psi':E\longrightarrow E^*$ is given by
\begin{align*}
\Psi'(u)v=\int_{\mathbb{R}^{N}}\int_{\mathbb{R}^{N}}\dfrac{K(x)K(y)F(u_n(x))f(u_n(y))v(y)}{|x|^{\alpha}|x-y|^{\lambda}|y|^{\alpha}}dxdy,\;\;\forall u,v\in E.
\end{align*}
This fact is proved similarly to Lemma 3.2, found in \cite{Leandro}.

The necessary properties involving the functional $Q:E\rightarrow \mathbb{ R}$ given by
$$
Q(u)=\int_{\mathbb{R}^{N}} \Phi(|\nabla u|)dx+\int_{\mathbb{R}^{N}}V(x)\Phi(|u |) dx.
$$
can be found in \cite{Lucas}. In summary, the energy function associated with $(P)$ given by
{\small\begin{align*}
	J(u)=\int_{\mathbb{ R}^{N}}\Phi(|\nabla u|)dx+\int_{\mathbb{ R}^{N}}V(x)\Phi(| u|)dx-\frac{1}{2}\int_{\mathbb{R}^{N}}\int_{\mathbb{R}^{N}}\dfrac{K(x)K(y)F(u_n(x))F(u_n(y))}{|x|^{\alpha}|x-y|^{\lambda}|y|^{\alpha}}dxdy,\;\;u\in E
	\end{align*}}	
is a continuous and Gateaux-differentiable functional such that $J':E\longrightarrow E^*$ given by
{\small\begin{align*}
 J'(u)v=&\int_{\mathbb{R}^{N}} \phi(|\nabla u|)\nabla u\nabla v dx+\int_{\mathbb{R}^{N} } V(x)\phi(|u|)uv dx-\int_{\mathbb{R}^{N}}\int_{\mathbb{R}^{N}}\dfrac{K(x)K(y)F(u_n(x))f(u_n(y))v(y)}{|x|^{\alpha}|x-y|^{\lambda}|y|^{\alpha}}dxdy
\end{align*}}
is continuous from the norm topology of	$E$ to the weak$^*$-topology of $E^*$.

The convexity of the functional $Q$ together with the Gateaux-differentiability of the functional $J$ allows us to present a definition of a critical point for $J$. In this sense, we will say that $u\in E$ is a critical point for the functional $J$ if
\begin{align}\label{0.13}
Q(v)-Q(u)\geq\int_{\mathbb{R}^{N}}\int_{\mathbb{R}^{N}}\dfrac{K(x)K(y)F(u_n(x))f(u_n(y))(v(y)-u(y))}{|x|^{\alpha}|x-y|^{\lambda}|y|^{\alpha}}dxdy,\;\;\;\;\forall v\in E.
\end{align}

Our next lemma establishes that a critical point $u$ in the sense \eqref{0.13} is a weak solution for $(P)$.

\begin{proposition} \label{0.4}
	If $u\in E$ is a critical point of $J$ in $E$, then u is a weak solution to $(P)$.
\end{proposition}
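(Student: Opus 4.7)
The plan is to exploit the linearity of the right-hand side of \eqref{0.13} in the direction $v-u$ together with the Gateaux differentiability of the functional $Q$. Since the nonlocal functional $\Psi(u)=\int_{\mathbb{R}^N}\int_{\mathbb{R}^N}\frac{K(x)K(y)F(u(x))F(u(y))}{|x-y|^{\lambda}}dxdy$ has already been shown to be continuously Gateaux differentiable with derivative equal to the double integral appearing in \eqref{0.13}, and since $J=Q-\tfrac{1}{2}\Psi$ is Gateaux differentiable with the stated expression for $J'(u)$, it follows that $Q=J+\tfrac{1}{2}\Psi$ is Gateaux differentiable at every $u\in E$, with
\begin{equation*}
Q'(u)\varphi=\int_{\mathbb{R}^N}\phi(|\nabla u|)\nabla u\cdot \nabla \varphi\,dx+\int_{\mathbb{R}^N}V(x)\phi(|u|)u\varphi\,dx,\quad \varphi\in E.
\end{equation*}

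The first step is to fix $\varphi\in E$ and $t>0$, and to apply \eqref{0.13} with the admissible test function $v=u+t\varphi\in E$. Because the right-hand side of \eqref{0.13} depends linearly on $v-u=t\varphi$, one obtains, after dividing by $t>0$,
\begin{equation*}
\frac{Q(u+t\varphi)-Q(u)}{t}\geq \int_{\mathbb{R}^N}\int_{\mathbb{R}^N}\frac{K(x)K(y)F(u(x))f(u(y))\varphi(y)}{|x-y|^{\lambda}}dxdy.
\end{equation*}
Passing to the limit as $t\to 0^+$ and using the Gateaux differentiability of $Q$ established above, one gets $Q'(u)\varphi\geq \int\!\int \tfrac{K(x)K(y)F(u(x))f(u(y))\varphi(y)}{|x-y|^{\lambda}}dxdy$.

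The second step is to replace $\varphi$ by $-\varphi$ (equivalently, to test \eqref{0.13} with $v=u-t\varphi$, $t>0$). The exact same argument produces the reverse inequality, and together they give the equality
\begin{equation*}
\int_{\mathbb{R}^N}\phi(|\nabla u|)\nabla u\cdot\nabla\varphi\,dx+\int_{\mathbb{R}^N}V(x)\phi(|u|)u\varphi\,dx=\int_{\mathbb{R}^N}\int_{\mathbb{R}^N}\frac{K(x)K(y)F(u(x))f(u(y))\varphi(y)}{|x-y|^{\lambda}}dxdy
\end{equation*}
for every $\varphi\in E$. Since $C_0^{\infty}(\mathbb{R}^N)\subset E$, this is precisely the weak formulation of $(P)$, so $u$ is a weak solution.

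The main (and essentially only) obstacle is to justify that the one-sided difference quotient $[Q(u+t\varphi)-Q(u)]/t$ converges to $Q'(u)\varphi$ as $t\to 0^+$ in the possibly nonreflexive setting, since classical differentiability arguments for $Q$ are unavailable in full generality. This is resolved by writing $Q=J+\tfrac{1}{2}\Psi$ and invoking the Gateaux differentiability of both summands, which was established earlier in the paper; alternatively, convexity of $Q$ (coming from $(\phi_1)$ and convexity of $\Phi$) guarantees that the right-derivative of $t\mapsto Q(u+t\varphi)$ at $t=0$ exists, and the identification with the integral expression follows by dominated convergence using the growth control $(\phi_3)$ on $\phi(\tau)\tau$ and $\Phi$.
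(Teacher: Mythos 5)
Your argument is correct: the paper gives no in-text proof here, deferring entirely to Lemma 4.1 of \cite{AlvesandLeandro}, and what you write is precisely the standard argument behind that lemma — test \eqref{0.13} with $v=u\pm t\varphi$, exploit the linearity of the nonlocal term in $v-u$, divide by $t>0$ and pass to the limit using the Gateaux differentiability of $Q$, obtaining the two opposite inequalities and hence the weak formulation. The one point requiring care, which you correctly isolate, is the identification of the one-sided derivative of $t\mapsto Q(u+t\varphi)$ with $\int_{\mathbb{R}^N}\phi(|\nabla u|)\nabla u\cdot\nabla\varphi\,dx+\int_{\mathbb{R}^N}V(x)\phi(|u|)u\varphi\,dx$; this follows from the monotonicity of convex difference quotients together with dominated convergence, the dominating function being controlled by $\Phi\in(\Delta_2)$ (guaranteed by $m<N$ in $(\phi_3)$) and the Young-type bound $\tilde{\Phi}(\phi(t)t)\le\Phi(2t)$ — no hypothesis on $\tilde{\Phi}$ is used, which is exactly why the argument survives in the nonreflexive case.
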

\noindent {\it{Proof:}} See Lemma 4.1 in \cite{AlvesandLeandro}.

\begin{lemma}\label{5.5} 	Suppose that $(V,K)\in \mathcal{K}_1$ and $(f_1)$ hold. Then there are $\rho,\eta>0$ such that $J(u)\geq \eta$ for all $u\in E\cap \partial B_{\rho}(0)$.
\end{lemma}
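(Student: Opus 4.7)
The plan is to exploit the decomposition $J = Q - \tfrac{1}{2}\Psi$, bounding $Q(u)$ below by $\|u\|_E^{m}$ and $\Psi(u)$ above by powers of $\|u\|_E$ strictly larger than $m$; on a sufficiently small sphere $\{\|u\|_E = \rho\}$ the quadratic-like term then dominates. Throughout I restrict to $\|u\|_E \le 1$, which by the embeddings $E \hookrightarrow D^{1,\Phi}(\mathbb{R}^N) \hookrightarrow L^{\Phi_*}(\mathbb{R}^N)$ and Proposition \ref{01.41} forces $\|\nabla u\|_\Phi$, $\|u\|_{L^\Phi_V}$, $\|u\|_{\Phi_*}$, and $\|u\|_{L^A_{K^s}}$ all to lie in $[0,1]$.

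For the lower bound on $Q$, applying Lemma \ref{0.3} separately to $\int \Phi(|\nabla u|)\,dx$ and $\int V\Phi(|u|)\,dx$ with $\xi_0(t)=t^m$ on $[0,1]$, followed by the elementary inequality $a^m+b^m \ge 2^{1-m}(a+b)^m$, yields
$$Q(u) \;\ge\; \|\nabla u\|_\Phi^{\,m} + \|u\|_{L^\Phi_V}^{\,m} \;\ge\; 2^{1-m}\|u\|_E^{\,m}.$$
For the upper bound on $\Psi$, Lemma \ref{01.290} gives
$$\Psi(u) \;\le\; C_1\!\left[\Big(\int_{\mathbb{R}^N} K(x)^s A(|u|)\,dx\Big)^{\!2/s} + \Big(\int_{\mathbb{R}^N} \Phi_*(|u|)\,dx\Big)^{\!2/s}\right]\!.$$
Since both Luxemburg norms are $\le 1$, Remark \ref{5.02} (with $\xi_{1,A}(t)=t^{a_1}$ on $[0,1]$) and Lemma \ref{0.5} (with $\xi_3(t)=t^{\ell^*}$ on $[0,1]$) bound the two modulars by $\|u\|_{L^A_{K^s}}^{a_1}$ and $\|u\|_{\Phi_*}^{\ell^*}$ respectively; combining with the continuous embeddings $\|u\|_{L^A_{K^s}}, \|u\|_{\Phi_*} \le C\|u\|_E$ from Proposition \ref{01.41} and the Orlicz--Sobolev inequality, I obtain
$$\Psi(u) \;\le\; C_2\bigl(\|u\|_E^{\,2a_1/s} + \|u\|_E^{\,2\ell^*/s}\bigr).$$

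To close the argument, the exponents must be compared: hypothesis $(A_3)$ gives $a_1 \ge m$, hypothesis $(\phi_3)$ gives $\ell^* > m$, and the standing restriction $\lambda \in (0,N)$ yields $s = 2N/(2N-\lambda) < 2$. Hence $2a_1/s \ge 2m/s > m$ and $2\ell^*/s > m$, so for $\|u\|_E = \rho$ with $\rho \le 1$,
$$J(u) \;\ge\; 2^{1-m}\rho^{m} - \tfrac{C_2}{2}\bigl(\rho^{2a_1/s} + \rho^{2\ell^*/s}\bigr),$$
and the right-hand side is strictly positive for $\rho$ sufficiently small, which supplies the desired $\eta > 0$. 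The only real difficulty is bookkeeping: tracking four distinct norms (the $E$-norm, the Luxemburg norms on $L^{\Phi_*}$ and $L^A_{K^s}$, and the modular) and confirming at the exponent-comparison step that the assumption $\lambda < N$, together with $a_1, \ell^* \ge m$, is precisely what makes both upper-bound exponents strictly exceed $m$.
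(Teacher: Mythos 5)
Your argument is correct and follows essentially the same route as the paper's proof: the lower bound $Q(u)\ge \|\nabla u\|_\Phi^m+\|u\|_{L^\Phi_V}^m\ge C\|u\|_E^m$ via the modular--norm inequalities and the elementary power inequality, the upper bound on $\Psi$ via Lemma \ref{01.290}, Remark \ref{5.02}, Lemma \ref{0.5} and the embeddings of Proposition \ref{01.41}, and the exponent comparison $2a_1/s,\,2\ell^*/s>m$ coming from $s<2$ (i.e.\ $\lambda<N$) together with $a_1\ge m$ and $\ell^*>m$. No further comment is needed.
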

\begin{proof} By Lemma \ref{01.290}, there exists a positive constant $C>0$ satisfying
{\small	\begin{align*}
	\left|	\int_{\mathbb{R}^{N}}\int_{\mathbb{R}^{N}}\dfrac{K(x)K(y)F(u(x))F(u(y))}{|x-y|^{\lambda}}dxdy\right|\leq C\left[\left(\int_{\mathbb{ R}^{N}}K(x)^s A(|u|) dx\right)^{\frac{2}{s}}+\left(\int_{\mathbb{ R}^{N}}K(x)^s Z(|u|) dx\right)^{\frac{2}{s}}\right]
	\end{align*}}
	for all $u\in E$. Hence, by defining the functional $J$ together with Proposition \ref{01.41}, we get
	\begin{align*}
	J(u)\geq& \int_{\mathbb{R}^{N}}\Phi(|\nabla u|)dx+\int_{\mathbb{R}^{N}}V(x)\Phi(|u|)dx-	C\left(\int_{\mathbb{ R}^{N}}K(x)^s A(|u|) dx\right)^{\frac{2}{s}} - C\left(\int_{\mathbb{ R}^{N}} K(x)^s Z(|u|) dx\right)^{\frac{2}{s}} \\
	\geq &\xi_0(\lVert \nabla u\lVert_{\Phi})+\xi_0(\lVert u\lVert_{V,\Phi})-	C\left(\int_{\mathbb{ R}^{N}}K(x)^s A(|u|) dx\right)^{\frac{2}{s}} - C\left(\int_{\mathbb{ R}^{N}} K(x)^s Z(|u|)  dx\right)^{\frac{2}{s}}\\
	\geq&\xi_0(\lVert \nabla u\lVert_{\Phi})+\xi_0(\lVert u\lVert_{V,\Phi})-C\big(\xi_5(\lVert u\lVert_{L_{K^s}^{A}(\mathbb{ R}^N)})\big)^{\frac{2}{s}}-C\big(\xi_{3}(\lVert u\lVert_{L_{K^s}^{Z}(\mathbb{ R}^N)})\big)^{\frac{2}{s}}\\
	\geq &\lVert \nabla u\lVert_{\Phi}^{m}+ \lVert u\lVert_{V,\Phi}^{m}-C\big(\lVert u\lVert_{L_{K^s}^{A}(\mathbb{ R}^N)}\big)^{\frac{2a_1}{s}}-C\big(\lVert u\lVert_{L_{K^s}^{Z}(\mathbb{ R}^N)}\big)^{\frac{2z_1}{s}} , 
	\end{align*}
	for $u\in E$ with $\lVert u\lVert_{E}\leq 1$ where $\displaystyle\xi_0(t)=\min_{t>0}\{t^\ell,t^m\}$, $\displaystyle\xi_3(t)=\max_{t>0}\{t^{z_1},t^{z_2}\}$ and $\displaystyle\xi_5(t)=\max_{t>0}\{t^{a_1},t^{a_2}\}$. By using the classical inequality
	\begin{align*}
	(x+y)^{\sigma}\leq 2^{\sigma -1}(x^\sigma+y^\sigma), \;\;\;x,y>0, \text{ e }\;\sigma>1,
	\end{align*}
and the continuous embedded of the space $E$ in the spaces $L_{K^s}^{A}(\mathbb{ R}^N)$ and $L_{K^s}^{Z}(\mathbb{ R}^ {N})$, we get for $u\in E$ with $\lVert u\lVert_{E}\leq 1$ that
	\begin{align*}
	J(u)\geq& C\big( \lVert \nabla u\lVert_{\Phi}+ \lVert u\lVert_{V,\Phi}\big)^{m} -C\big(\lVert u\lVert_{L_{K^s}^{A}(\mathbb{ R}^N)}\big)^{\frac{2a_1}{s}}-C\big(\lVert u\lVert_{L_{K^s}^{Z}(\mathbb{ R}^N)}\big)^{\frac{2z_1}{s}}\\
	\geq&C\lVert u\lVert_{E}^{m}- C\big(\lVert u\lVert_{E}^{\frac{2a_1}{s}}+\lVert u\lVert_{E}^{\frac{2z_1}{s}}\big),
	\end{align*}
for some constant $C>0$. As $\dfrac{2}{s}>1$, then $m<\dfrac{2a_1}{s}$ and $m<\dfrac{2z_1}{s}$. Hence, setting $\rho=\lVert u\lVert$ small enough,
	\begin{align*}
	J(u)\geq C\lVert u\lVert_{E}^{m}- C\big(\lVert u\lVert_{E}^{\frac{2a_1}{s}}+\lVert u\lVert_{E}^{\frac{2z_1}{s}}\big):=\eta>0.
	\end{align*}
	Which completes the proof.
	
\end{proof}

Using the Lemma \ref{01.291}, we can repeat the same arguments used in the proof of Lemma \ref{5.5} to prove the following lemma:

\begin{lemma}\label{5.5'} 	Suppose that $(V,K)\in \mathcal{K}_2$ and $(f_5)$ hold. Then there are $\rho,\eta>0$ such that $J(u)\geq \eta$ for all $u\in E\cap \partial B_{\rho}(0)$.
\end{lemma}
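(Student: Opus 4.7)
The plan is to mimic the proof of Lemma \ref{5.5}, replacing the role of $A$ by $B$ and of Lemma \ref{01.290} by Lemma \ref{01.291}. Concretely, for $u\in E$ with $\lVert u\lVert_E\le 1$, I would start from the definition of $J$ and apply Lemma \ref{01.291} to obtain
\begin{align*}
J(u)\;\ge\; \int_{\mathbb{R}^N}\Phi(|\nabla u|)\,dx+\int_{\mathbb{R}^N}V(x)\Phi(|u|)\,dx-C\left(\int_{\mathbb{R}^N}K(x)^s B(|u|)\,dx\right)^{2/s}-C\left(\int_{\mathbb{R}^N}\Phi_{*}(|u|)\,dx\right)^{2/s}.
\end{align*}

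Next, I would apply Lemma \ref{0.3} to the first two terms and Remark \ref{5.03} together with Lemma \ref{0.5} to the last two to convert the modular estimates into norm estimates. Using Proposition \ref{01.41} (continuous embedding $E\hookrightarrow L^{B}_{K^s}(\mathbb{R}^N)$ under $\mathcal{K}_2$) and the continuous embedding $E\hookrightarrow L^{\Phi_*}(\mathbb{R}^N)$, this produces, for $u\in E$ with $\lVert u\lVert_E\le 1$, the bound
\begin{align*}
J(u)\;\ge\; \lVert\nabla u\lVert_{\Phi}^m+\lVert u\lVert_{V,\Phi}^m-C\bigl(\lVert u\lVert_{L^{B}_{K^s}}\bigr)^{2b_1/s}-C\bigl(\lVert u\lVert_{L^{\Phi_*}}\bigr)^{2\ell^*/s}.
\end{align*}
Combining $\lVert\nabla u\lVert_{\Phi}^m+\lVert u\lVert_{V,\Phi}^m\ge C\lVert u\lVert_E^m$ (via the elementary inequality $(x+y)^\sigma\le 2^{\sigma-1}(x^\sigma+y^\sigma)$ applied to $\sigma=m$) with the continuous embeddings yields
\begin{align*}
J(u)\;\ge\; C\lVert u\lVert_E^m-C\bigl(\lVert u\lVert_E^{2b_1/s}+\lVert u\lVert_E^{2\ell^*/s}\bigr).
\end{align*}

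To close the argument I need $2b_1/s>m$ and $2\ell^*/s>m$. Since $\lambda\in(0,2N-2N/m)$ gives $s=2N/(2N-\lambda)>1$, and since $(B_3)$ provides $b_1>m$ together with $\ell^*>m$, both exponents on the higher-order terms exceed $m$. Therefore, choosing $\rho=\lVert u\lVert_E$ sufficiently small, the leading term $C\lVert u\lVert_E^m$ dominates and one obtains
\begin{align*}
J(u)\;\ge\; \eta\;>\;0\qquad\text{for all }u\in E\cap\partial B_\rho(0),
\end{align*}
as desired. The only nontrivial step is verifying that the strict inequalities $m<2b_1/s$ and $m<2\ell^*/s$ indeed hold under the hypotheses, which follow from $b_1,\ell^*>m$ and $s\in(1,2)$; everything else is a direct transcription of the proof of Lemma \ref{5.5} with $B$ in place of $A$.
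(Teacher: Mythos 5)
Your proposal is correct and is exactly what the paper does: the paper itself gives no separate argument for this lemma, stating only that one repeats the proof of Lemma \ref{5.5} with Lemma \ref{01.291} (and the embedding $E\hookrightarrow L^{B}_{K^s}(\mathbb{R}^N)$ from Proposition \ref{01.41}) replacing Lemma \ref{01.290}. Your verification of the exponent inequalities $m<2b_1/s$ and $m<2\ell^*/s$, using $b_1\in(m,\ell^*)$ from $(B_3)$, $\ell^*>m$ from $(\phi_3)$, and $s\in(1,2)$, is precisely the point that makes the transcription go through.
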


By a standard argument, the following Lemma follows from the condition $(f_4)$.
\begin{lemma}\label{4.10}
	There is $e\in E$ with $\lVert u\lVert_{E}>\rho$ and $J(e)<0$.
\end{lemma}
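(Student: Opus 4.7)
The plan is to fix a nonnegative test function $u_0 \in C_0^{\infty}(\mathbb{R}^N)\cap E$ with $u_0\not\equiv 0$, and then to show that the ray $t\mapsto J(tu_0)$ tends to $-\infty$ as $t\to+\infty$. Once this is established, we simply pick $t_0$ large enough so that $\|t_0 u_0\|_E>1$ and $J(t_0 u_0)<0$, and set $e:=t_0 u_0$. The burden of the proof is therefore to compare the growth rate in $t$ of the ``kinetic'' part $Q(tu_0)$ with the growth rate of the Choquard double integral.

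For the first part, since $t\geq 1$ eventually, Lemma \ref{0.3} applied to $\Phi$ gives $\Phi(t|\nabla u_0|)\leq t^{m}\Phi(|\nabla u_0|)$ and $\Phi(tu_0)\leq t^{m}\Phi(u_0)$ pointwise, so
\[
Q(tu_0)=\int_{\mathbb R^N}\Phi(|\nabla(tu_0)|)\,dx+\int_{\mathbb R^N}V(x)\Phi(|tu_0|)\,dx\leq t^{m}Q(u_0),\qquad t\geq 1.
\]
This is the easy estimate; the only input is $(\phi_3)$ through Lemma \ref{0.3}.

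For the Choquard term we use $(f_3)$ and $(f_4)$. Fix a compact set $\Omega_0\subset\operatorname{supp}u_0$ on which $u_0\geq\delta>0$ for some $\delta>0$. By $(f_4)$, for any $M>0$ there exists $R(M)>0$ such that $F(\tau)\geq M\tau^{m/2}$ for every $\tau\geq R(M)$. Taking $t\geq R(M)/\delta$, we have $tu_0(x)\geq R(M)$ for all $x\in\Omega_0$, so $(f_3)$ (which gives $F\geq 0$ on $[0,\infty)$) and the positivity of $K$ from $(K_0)$ yield
\[
\iint_{\mathbb R^N\times\mathbb R^N}\frac{K(x)K(y)F(tu_0(x))F(tu_0(y))}{|x-y|^{\lambda}}\,dx\,dy\geq M^{2}t^{m}\,C_{0}(u_0),
\]
where
\[
C_0(u_0):=\iint_{\Omega_0\times\Omega_0}\frac{K(x)K(y)\,u_0(x)^{m/2}u_0(y)^{m/2}}{|x-y|^{\lambda}}\,dx\,dy>0
\]
is a strictly positive constant depending only on $u_0$, $K$ and $\lambda$.

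Combining the two bounds, for every $M>0$ and every $t\geq \max\{1,R(M)/\delta\}$,
\[
J(tu_0)\leq t^{m}\Bigl(Q(u_0)-\tfrac{1}{2}M^{2}C_{0}(u_0)\Bigr).
\]
Choosing $M$ so large that the bracket is negative, we obtain $J(tu_0)\to-\infty$ as $t\to+\infty$. In particular, there exists $t_0>0$ with $\|t_0 u_0\|_E>1$ and $J(t_0 u_0)<0$, and $e:=t_0 u_0$ satisfies the statement. The only delicate point is the lower bound on the Choquard double integral, which is resolved by restricting integration to the set $\Omega_0\times\Omega_0$ where the superlinear estimate from $(f_4)$ is uniformly valid; everywhere else we only use $F\geq 0$.
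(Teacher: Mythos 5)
Your proof is correct and is precisely the ``standard argument'' the paper alludes to (the paper omits the proof, noting only that the lemma follows from $(f_4)$): bound $Q(tu_0)\leq t^{m}Q(u_0)$ for $t\geq 1$ via Lemma \ref{0.3}, and use $(f_3)$, $(K_0)$ and the superlinear growth in $(f_4)$ on a set where $u_0$ is bounded below to force the Choquard term to dominate $t^{m}$ with an arbitrarily large constant. No objections.
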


The previous lemmas establish the mountain pass geometry for the functinal $J$ in both cases. In what follows, let us denote by $c>0$ the mountain pass level associated with $J$, that is,
\begin{align*}
c = \inf_{\gamma \in \Gamma} \max_{t \in [0,1]} J (\gamma(t))
\end{align*}
where
\begin{align*}
\Gamma = \{\gamma \in C([0,1], X) : \; \gamma(0) = 0 \; \text{ and } \; \gamma(1) = e \}.
\end{align*}
Associated with $c$, we have a Cerami sequence $(u_n)\subset E$, that is,
\begin{align}\label{11.39}
J(u_n)\longrightarrow c\;\;\;\;\text{ and }\;\;\;\;
(1+\lVert u_n\lVert)\lVert J'(u_n)\lVert_{*}\longrightarrow 0.
\end{align}
The above sequence is obtained from the Ghoussoub-Preiss theorem, see [\citenum{Motreanu}, Theorem $5.46$].

Now, we are able to prove that the Cerami sequence given in \eqref{11.39} is bounded in $E$.

\begin{lemma}\label{5}
	Let $(u_n)$ the Cerami sequence given in \eqref{11.39}. There is a constant $M>0$ such that $J(t u_n)\leq M$ for every $t\in [0,1]$ and $n\in\mathbb{N}$.
\end{lemma}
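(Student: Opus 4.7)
The plan is to exploit the auxiliary functional
\[
\Xi(u) := J(u) - \frac{1}{m}\,J'(u)(u),
\]
for which $(\phi_3)$ and $(f_2)$ make every term nonnegative, and, crucially, to show that $t \mapsto \Xi(tu)$ is nondecreasing on $(0,\infty)$ for every $u \in E$. Granting the monotonicity, the lemma follows quickly: let $t_n \in [0,1]$ be a point where $g_n(t) := J(tu_n)$ attains its maximum over $[0,1]$. The cases $t_n \in \{0,1\}$ are immediate since $g_n(0) = 0$ and $g_n(1) = J(u_n) \to c$. If $t_n \in (0,1)$, the $C^1$ regularity of $g_n$ (which follows from the Gateaux differentiability of $J$ and the norm-to-weak-$*$ continuity of $J'$ evaluated on the fixed vector $u_n$) gives $g_n'(t_n) = J'(t_n u_n)u_n = 0$, hence $J'(t_n u_n)(t_n u_n) = 0$ and $\Xi(t_n u_n) = J(t_n u_n)$. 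The monotonicity then yields
\[
J(t_n u_n) \,=\, \Xi(t_n u_n) \,\leq\, \Xi(u_n) \,=\, J(u_n) - \frac{1}{m}J'(u_n)(u_n) \,=\, c + o_n(1),
\]
using $|J'(u_n)(u_n)| \leq (1+\lVert u_n\rVert_E)\,\lVert J'(u_n)\rVert_{*} \to 0$ from \eqref{11.39}. Thus $J(tu_n) \leq c+1$ for $n$ large and all $t \in [0,1]$; for the finitely many remaining indices the uniform bound comes from the continuity of $g_n$ on the compact interval $[0,1]$.

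To justify the monotonicity of $t \mapsto \Xi(tu)$, I decompose
\[
\Xi(tu) = \int_{\mathbb R^N}\!\Big[\Phi(|t\nabla u|) - \tfrac{1}{m}\phi(|t\nabla u|)|t\nabla u|^2\Big]dx + \int_{\mathbb R^N}\! V(x)\Big[\Phi(|tu|) - \tfrac{1}{m}\phi(|tu|)|tu|^2\Big]dx + \frac{1}{m}\,D\!\big(F(tu),\psi(tu)\big),
\]
where $\psi(\tau) := \tau f(\tau) - \tfrac{m}{2}F(\tau)$ and $D(g_1,g_2) := \iint K(x)K(y)g_1(x)g_2(y)|x-y|^{-\lambda}\,dx\,dy$, and show each summand is pointwise nondecreasing in $t$. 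For the quasilinear integrands, writing $r = t|\xi|$ and using $\Phi'(r) = r\phi(r)$, one computes
\[
\frac{d}{dt}\Big[\Phi(r) - \tfrac{1}{m}\phi(r)r^2\Big] = \frac{|\xi|\,r}{m}\big[(m-2)\phi(r) - r\phi'(r)\big] \geq 0,
\]
because $(\phi_4)$, which states that $\phi(t)/t^{m-2}$ is nonincreasing, differentiated reads exactly $r\phi'(r) \leq (m-2)\phi(r)$. For the nonlocal term, $F$ is nondecreasing on $\mathbb R$ and vanishes on $(-\infty,0]$ by $(f_3)$, hence $F(tu(x))$ is nondecreasing in $t$ at every $x$; and $(f_2)$--$(f_3)$ together with $\psi(0^+) = 0$ render $\psi$ nonnegative and nondecreasing on $[0,\infty)$ and $\psi \equiv 0$ on $(-\infty,0]$, so $\psi(tu(y))$ is nondecreasing in $t$ at every $y$. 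Since $K \geq 0$ and both factors are nonnegative and nondecreasing, the integrand of $D(F(tu),\psi(tu))$ is pointwise nondecreasing in $t$, and by monotone convergence the monotonicity passes to the double integral.

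The main subtlety is the pointwise quasilinear identity; once the form $(m-2)\phi(r) - r\phi'(r) \geq 0$ of $(\phi_4)$ is recognized, the nonlocal piece is handled purely by the global sign and monotonicity properties of $F$ and $\psi$, with no further derivative computation. The rest of the argument reduces to evaluating $\Xi$ at the interior maximum point and passing to the limit along the Cerami sequence.
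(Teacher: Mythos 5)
Your proof is correct and takes essentially the same route as the paper's: both identify the maximizer $t_n$, use $J'(t_nu_n)u_n=0$ at an interior maximum, and compare $mJ(t_nu_n)-J'(t_nu_n)(t_nu_n)$ with $mJ(u_n)-J'(u_n)u_n$ through the monotonicity of $t\mapsto m\Phi(t)-\phi(t)t^2$ (from $(\phi_4)$) and of $t\mapsto tf(t)-\frac{m}{2}F(t)$ together with $F\geq 0$ (from $(f_2)$, $(f_3)$). Your explicit pointwise verification of the monotonicity of $t\mapsto \Xi(tu)$ is just a more detailed writing of the comparison the paper asserts.
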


\noindent{\bf{Proof:}} Let $t_n\in [0,1]$ be such that $J(t_n u_n)=\displaystyle\max_{t\in[0,1]} J(tu_n)$. If $t_n=0$ and $t_n=1$, we are done. Thereby, we can assume $t_n\in(0,1)$ ,and so $J'(t_n u_n)u_n=0$. From this,
\begin{align*}
mJ(t_n u_n)=&mJ(t_n u_n)-J'(t_n u_n)(t_n u_n)\\
=&\int_{\mathbb{R}^{N}}\big(m\Phi(|\nabla(t_nu_n)|)-\phi(|\nabla(t_n u_n)|)|\nabla (t_n u_n)|^{2}\big)dx\\
&+\int_{\mathbb{R}^{N}}V(x)\big(m\Phi(|t_nu_n|)-\phi(|t_n u_n|)|t_n u_n|^{2}\big)dx\\
&+\int_{\mathbb{R}^{N}}\int_{\mathbb{R}^{N}}\dfrac{K(x)K(y)\big[F(t_nu_n(x))f(t_nu_n(y))t_nu_n(y)-\dfrac{m}{2}F(t_nu_n(x))F(t_nu_n(y))\big]}{|x|^{\alpha}|x-y|^{\lambda}|y|^{\alpha}}dxdy.
\end{align*}
The conditions ${(f_{2})}$ and ${(f_{3})}$ guarantee that the functions $f(t)t-\dfrac{m}{2}F(t)$ and $F(t)$ are nondecreasing for $t>0$. The condition $(\phi_4)$ ensures that the function $ m\Phi(t)-\phi(t)t^{2}$ is increasing for $t>0$. Thus,
\begin{align*}
mJ(t_n u_n)\leq mJ(u_n)-J'(u_n)u_n=mJ(u_n)-o_n(1).
\end{align*}
Since $(J(u_n))$ is bounded, there is $M>0$ such that 
\begin{align*}
J(t u_n)\leq M,\;\;\forall t\in [0,1]\;\text{ and } \;n\in\mathbb{N}.
\end{align*}

\qed

\begin{proposition}\label{5.7}
The Cerami sequence $(u_n)$ given in \eqref{11.39} is bounded.
\end{proposition}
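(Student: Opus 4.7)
Plan: I would argue by contradiction, supposing $\lVert u_n\rVert_E \to +\infty$ and normalising $v_n := u_n/\lVert u_n\rVert_E$ so that $\lVert v_n\rVert_E = 1$. Using Lemma \ref{0.1} and the compact embedding $E \hookrightarrow L^{\Phi}_{\mathrm{loc}}(\mathbb R^N)$, a diagonal extraction produces a subsequence and some $v \in E$ with $v_n \rightharpoonup^{\ast} v$ in $D^{1,\Phi}(\mathbb R^N)$ and $v_n \to v$ a.e.\ on $\mathbb R^N$. The proof then splits on whether $v \not\equiv 0$ or $v \equiv 0$.

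If $v \not\equiv 0$, on $\Omega_0 := \{v \neq 0\}$ one has $|u_n(x)| = |v_n(x)|\lVert u_n\rVert_E \to +\infty$ a.e., and the identity
$$\frac{F(u_n(x))}{\lVert u_n\rVert_E^{m/2}} = \frac{F(u_n(x))}{|u_n(x)|^{m/2}}\,|v_n(x)|^{m/2}$$
combined with $(f_4)$ gives $F(u_n)/\lVert u_n\rVert_E^{m/2} \to +\infty$ a.e.\ on $\Omega_0$. Lemma \ref{0.3} provides the upper bounds $\int\Phi(|\nabla u_n|),\, \int V\Phi(|u_n|) \leq \lVert u_n\rVert_E^m$ for $n$ large, so from the boundedness of $J(u_n)$ I get
$$\frac{1}{\lVert u_n\rVert_E^m}\iint_{\mathbb R^N\times\mathbb R^N}\frac{K(x)K(y)F(u_n(x))F(u_n(y))}{|x-y|^{\lambda}}\,dx\,dy \leq C.$$
Choosing a bounded $A\subset\Omega_0$ with $|A|>0$ and invoking $K>0$ a.e.\ from $(K_0)$, the integrand restricted to $A\times A$ is nonnegative with pointwise liminf equal to $+\infty$ a.e.; Fatou's lemma forces the left-hand side to blow up, a contradiction. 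Hence $v\equiv 0$.

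Assuming now $v\equiv 0$, fix $R>1$ and take $t_n := R/\lVert u_n\rVert_E \in (0,1)$ for $n$ large. Lemma \ref{5} yields $J(Rv_n)=J(t_n u_n)\leq M$. Since $(Rv_n)$ is bounded in $E$ with $Rv_n\rightharpoonup^{\ast} 0$, Lemma \ref{01.40'} (in the $\mathcal K_1$ setting) or Corollary \ref{5.2} (in the $\mathcal K_2$ setting) implies that the Choquard term of $J(Rv_n)$ vanishes as $n\to\infty$. Meanwhile $\lVert v_n\rVert_E=1$ forces $\max\{\lVert\nabla v_n\rVert_{\Phi},\lVert v_n\rVert_{L^{\Phi}_V}\}\geq 1/2$, so Lemma \ref{0.3} gives $\int\Phi(|\nabla v_n|)+\int V\Phi(|v_n|)\geq (1/2)^m$, and the scaling relation $\Phi(R\rho)\geq \xi_0(R)\Phi(\rho)=R^{\ell}\Phi(\rho)$ for $R\geq 1$ yields
$$M \geq J(Rv_n) \geq R^{\ell}\,(1/2)^m + o_n(1).$$
Letting first $n\to\infty$ and then $R\to+\infty$ produces the desired contradiction.

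The main obstacle is the case $v\not\equiv 0$: the non-locality of the Choquard term requires applying Fatou's lemma to a \emph{bilinear} integrand on $\Omega_0\times\Omega_0$, and one must carefully match the superlinear growth rate $F(t)\gg |t|^{m/2}$ provided by $(f_4)$ against the correct scaling exponent $m$ dictated by the $\Phi$-energy through Lemma \ref{0.3}. The $v\equiv 0$ subcase is more routine but depends crucially on the convergence lemma for the Choquard nonlinearity established earlier (Lemma \ref{01.40'} / Corollary \ref{5.2}).
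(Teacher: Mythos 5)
Your proof is correct and follows essentially the same strategy as the paper's: contradiction via the normalization $v_n=u_n/\lVert u_n\rVert_E$, the dichotomy $v=0$ versus $v\neq 0$, with Lemma \ref{5} plus the vanishing of the Choquard term (Lemma \ref{01.40'} / Corollary \ref{5.2}) handling the first case, and Fatou's lemma combined with $(f_4)$ and the scaling bounds of Lemma \ref{0.3} handling the second. The only cosmetic difference is that you bypass the paper's preliminary three-case split on which component of $\lVert u_n\rVert_E$ blows up by estimating $\int_{\mathbb{R}^N}\Phi(|\nabla u_n|)dx+\int_{\mathbb{R}^N}V(x)\Phi(|u_n|)dx\leq 2\lVert u_n\rVert_E^{m}$ directly for large $n$.
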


\noindent{\bf{Proof:}} Suppose by contradiction that $\lVert u_n\lVert_{E}\longrightarrow\infty$, then we have the following cases:\vspace*{0.1cm}\\
$i)\;\;\lVert \nabla u_n\lVert_{\Phi}\longrightarrow +\infty$ and $(\lVert u_n\lVert_{V,\Phi})$ is bounded\vspace*{0.1cm}\\
$ii)\;\;\lVert u_n\lVert_{V,\Phi}\longrightarrow \infty$ and $(\lVert \nabla u_n\lVert_{\Phi})$ is bounded\vspace*{0.1cm}\\
$iii)\;\;\lVert \nabla u_n\lVert_{\Phi}\longrightarrow +\infty$ and $\lVert u_n\lVert_{V,\Phi}\longrightarrow +\infty$.

In the case $iii)$, consider
$$w_n = \dfrac{ u_n}{\lVert u_n \lVert_{E}}, \; \; \; \; \forall n \in \mathbb{N}.$$
Since $ \lVert w_n \lVert_{E} = 1$, by Lemma \ref{0.1}, there exists $w\in E$ such that $w_n\xrightharpoonup[\quad]{\ast}w$ in $D^{1,\Phi}(\mathbb{R}^{N} )$. There are two possible cases: $w = 0$ or $w \neq0$.

\noindent{\it{Case:} $w = 0$}

Note that for every constant $\sigma>1$ there is $n_0\in \mathbb{N}$ such that $\dfrac{\sigma}{\lVert  u_n\lVert_{E}}\in[0,1]$, for $n\geq n_0.
$
Given this, we get
{\footnotesize\begin{align*}
J(t_n u_n)\geq& J( \dfrac{\sigma}{\lVert \nabla u_n\lVert_{\Phi}} u_n)\\
=&J(\sigma w_n)\\
=&\int_{\mathbb{R}^{N}}\Phi(\sigma|\nabla w_n|)dx+\int_{\mathbb{R}^{N}}V(x)\Phi(\sigma|w_n|)dx-\dfrac{1}{2}\int_{\mathbb{R}^{N}}\int_{\mathbb{R}^{N}}\dfrac{K(x)K(y)F(\sigma w_n(x))F(\sigma w_n(y))}{|x|^{\alpha}|x-y|^{\lambda}|y|^{\alpha}}dxdy\\
\geq& \sigma Q(w_n)-\dfrac{1}{2}\int_{\mathbb{R}^{N}}\int_{\mathbb{R}^{N}}\dfrac{K(x)K(y)F(\sigma w_n(x))F(\sigma w_n(y))}{|x|^{\alpha}|x-y|^{\lambda}|y|^{\alpha}}dxdy
\end{align*}}
By definition of the sequence $(w_n)$, we have
$
\lVert \nabla w_n\lVert_{\Phi}\leq 1$ and $
\lVert w_n\lVert_{V,\Phi}\leq 1,$ for all $n\in\mathbb{N}.
$
Then,
\begin{align*}
\int_{\mathbb{R}^{N}}\Phi(|\nabla w_n|)dx\geq \lVert \nabla w_n\lVert_{\Phi}^{m}\;\text{ and }\;
\int_{\mathbb{R}^{N}}V(x)\Phi(|w_n|)dx\geq \lVert w_n\lVert_{V,\Phi}^{m}.
\end{align*}
So there is $C>0$ such that
\begin{align*}
Q(w_n)\geq \lVert \nabla w_n\lVert_{\Phi}^{m}+\lVert w_n\lVert_{V,\Phi}^{m}\geq C(\lVert \nabla w_n\lVert_{\Phi}+\lVert w_n\lVert_{V,\Phi})^{m},\;\;\;\;\forall\;n\in\mathbb{N}.
\end{align*}
Thus
{\footnotesize\begin{align*}
	J(t_n u_n)
	\geq& \sigma C (\lVert w_n\lVert_{ E})^m-\dfrac{1}{2}\int_{\mathbb{R}^{N}}\int_{\mathbb{R}^{N}}\dfrac{K(x)K(y)F(\sigma w_n(x))F(\sigma w_n(y))}{|x|^{\alpha}|x-y|^{\lambda}|y|^{\alpha}}dxdy\\
	=& \sigma C -\dfrac{1}{2}\int_{\mathbb{R}^{N}}\int_{\mathbb{R}^{N}}\dfrac{K(x)K(y)F(\sigma w_n(x))F(\sigma w_n(y))}{|x|^{\alpha}|x-y|^{\lambda}|y|^{\alpha}}dxdy
	\end{align*}}
If $w=0$, segue de \eqref{1.15'} or \eqref{5.3} that
\begin{align*}
\lim_{n \rightarrow \infty}\int_{\mathbb{R}^{N}}\int_{\mathbb{R}^{N}}\dfrac{K(x)K(y)F(\sigma w_n(x))F(\sigma w_n(y))}{|x|^{\alpha}|x-y|^{\lambda}|y|^{\alpha}}dxdy=0,
\end{align*}
therefore,
\begin{align*}
\liminf_{n\rightarrow\infty}J(t_nu_n)\geq C\sigma,\;\;\;\; \forall\;\sigma\geq 1.
\end{align*}
which constitutes a contradiction with Lemma \ref{5}, once that $(J(t_n u_n))$ is bounded from above. 

\noindent{\it{Case:} $w\neq0$}

Recalling that 
\begin{align*}
|u_n|=|w_n|\lVert u_n\lVert_{E} \;\;\text{ and }\;\;
\dfrac{u_n(x)}{\lVert u_n\lVert_{E}}=w_n(x)\longrightarrow w(x),\;a.e.\, \text{ in } \mathbb{R}^{N}
\end{align*}
we will get that
\begin{align*}
|w_n(x)|\longrightarrow |w(x)|,\;\;a.e.\, \text{ in } \mathbb{ R}^N.
\end{align*}
Furthermore, from the fact that $\lVert  u_n\lVert_{E}\rightarrow+\infty$, we can conclude that
\begin{align*}
|u_n(x)|=|w_n(x)|\lVert  u_n\lVert_{E} \rightarrow+\infty,\;\text{ as }\;n\rightarrow\infty \;\text{ for } x\in\{y\in\mathbb{ R}^N: w(y)\neq 0\}.
\end{align*}
 By \eqref{11.39},
\begin{align}\label{1.24}
0=\limsup_{n \to \infty}\dfrac{c}{\lVert u_n\lVert_{E}^m}=\limsup_{n \to \infty}\dfrac{J(u_n)}{\lVert u_n\lVert_{E}^m}.
\end{align}
 As $\lVert u_n\lVert_{\Phi}\geq 1$ and $\lVert u_n\lVert_{V,\Phi}\geq 1$ for every $n\geq n_0$,
\begin{align}\label{1.26}
\int_{\mathbb{R}^{N}}\Phi(|\nabla u_n|)dx\leq \lVert \nabla u_n\lVert_{\Phi}^{m}\;\text{ and }\;\int_{\mathbb{R}^{N}}V(x)\Phi(|u_n|) dx\leq \lVert u_n\lVert_{V,\Phi}^{m},\;\forall n\geq n_0.
\end{align}
Thus, it follows from $(f_4)$, \eqref{1.24}, \eqref{1.26} and Fatou’s Lemma that
\begin{align*}
0=&\limsup_{n \to \infty}\dfrac{J(u_n)}{\lVert u_n\lVert_{E}^m}\\
\leq &\limsup_{n\rightarrow\infty} \left[ \dfrac{1}{\lVert u_n\lVert_{E}^{m}}\int_{\mathbb{R}^{N}}\Phi(|\nabla u_n|)dx+\dfrac{1}{\lVert u_n\lVert_{E}^{m}}\int_{\mathbb{R}^{N}}V(x)\Phi(|u_n|)dx\right]\\
&-\liminf_{n\rightarrow\infty} \left[ \dfrac{1}{2}\int_{\mathbb{R}^{N}}\int_{\mathbb{R}^{N}}\dfrac{K(x)K(y)}{|x|^{\alpha}|x-y|^{\lambda}|y|^{\alpha}}\dfrac{F(u_n(x))F(u_n(y))}{\lVert u_n\lVert_{E}^m}dxdy\right]\\
\leq &2-\dfrac{1}{2}\liminf_{n\rightarrow\infty} \left[ \int_{\mathbb{R}^{N}}\int_{\mathbb{R}^{N}}\dfrac{K(x)K(y)}{|x|^{\alpha}|x-y|^{\lambda}|y|^{\alpha}}\dfrac{F(u_n(x))}{|u_n(x)|^{\frac{m}{2}}}|w_n(x)|\frac{F(u_n(y))}{|u_n(y)|^{\frac{m}{2}}}|w_n(y)|dxdy\right]\\
=&-\infty
\end{align*}
which is a contradiction. This shows that $(u_n)$ is bounded in $E$.

The cases $i)$ and $ii)$ are analogous to the case $iii)$.

\qed

\section{Ground state solutions for $(P)$}

Since that the Cerami sequence $(u_n)$ given in \eqref{11.39} is bounded in  $E$, by Lemma \ref{0.6}, we can assume that for some subsequence, there is $u \in   E$ such that
\begin{align}\label{6.4}
u_{n}\xrightharpoonup[\quad]{\ast} u\;\;\text{ in }D^{1,\Phi}(\mathbb{R}^{N})\;\;\;\text{ and }\;\;\;
u_{n}(x)\longrightarrow u(x)\;\;a.e.\;\;\mathbb{R}^{N}.
\end{align}

Fix $v\in C^{\infty}_0(\mathbb{R}^N)$. By boundedness of Cerami sequence $(u_n)$, we have $J'(u_n)(v-u_n)=o_n(1)$, hence, since $\Phi$ is a convex function, it is possible to show that
\begin{align}\label{7.2}
\begin{split}
Q(v)-Q(u_n)\geq\int_{\mathbb{R}^{N}} \int_{\mathbb{R}^{N}}\dfrac{K(x)K(y)F(u_n(x))f(u_n(y))(v(y)-u_n(y))}{|x|^{\alpha}|x-y|^{\lambda}|y|^{\alpha}}dxdy+o_n(1).
\end{split}
\end{align}
For the Lemma \ref{0.1}, we have to
\begin{align*}
\liminf_{n\rightarrow\infty}\int_{\mathbb{R}^{N}}\Phi(|\nabla u_n|)dx\geq\int_{\mathbb{R}^{N}}\Phi( |\nabla u|)dx.
\end{align*}
Now, due to the Fatou's Lemma, we have
\begin{align*}
\liminf_{n\rightarrow\infty}\int_{\mathbb{R}^{N}}V(x)\Phi(| u_n|)dx\geq\int_{\mathbb{R}^{N}}V (x)\Phi(|u|)dx.
\end{align*}
Therefore,
\begin{align}\label{7.3}
\liminf_{n\rightarrow\infty} Q(u_n)\geq Q(u).
\end{align}
From \eqref{7.2} and \eqref{7.3} together with the limits \eqref{1.15'} and \eqref{5.9} (or the limit \eqref{11.113'} and \eqref{5.10}), we get
\begin{align*}
Q(v)-Q(u)\geq\int_{\mathbb{R}^{N}}\int_{\mathbb{R}^{N}}\dfrac{K(x)K(y)F(u(x))f(u(y))(v(y)-u(y))}{|x|^{\alpha}|x-y|^{\lambda}|y|^{\alpha}}dxdy.
\end{align*}
As $E=\overline{C^{\infty}_{0}(\mathbb{R}^{N})}^{\lVert\cdot\lVert_{E}}$ and $\Phi\in( \Delta_{2})$, we conclude that
\begin{align}\label{07.4}
Q(v)-Q(u)\geq\int_{\mathbb{R}^{N}}\int_{\mathbb{R}^{N}}\dfrac{K(x)K(y)F(u(x))f(u(y))(v(y)-u(y))}{|x|^{\alpha}|x-y|^{\lambda}|y|^{\alpha}}dxdy,\;\;\;\;\forall\;v\in E.
\end{align}
In other words, $u$ is a critical point of the $J$ functional. From Proposition \ref{0.4}, we can conclude that $u$ is a weak solution for $(P)$. Now, we substitute $v=u^+:=\max\{0,u(x)\}$ in \eqref{07.4} and we get
\begin{align*}
-\int_{\mathbb{R}^{N}}\Phi(|\nabla u^-|)dx-\int_{\mathbb{R}^{N}}V(x)\Phi(u^-)dx\geq\int_{\mathbb{R}^{N}}\int_{\mathbb{R}^{N}}\dfrac{K(x)K(y)F(u(x))f(u(y))(u^-(y))}{|x|^{\alpha}|x-y|^{\lambda}|y|^{\alpha}}dxdy=0,
\end{align*}
which leads to
\begin{align*}
\int_{\mathbb{R}^{N}}\Phi(|\nabla u^-|)dx=0\;\;\text{ and }\;\;\int_{\mathbb{R}^{N}}V(x)\Phi(u^-)dx=0
\end{align*}
whence it is readily inferred that $u^-=0$, therefore, $u$ is a weak nonnegative solution.

Note that $u$ is nontrivial. In the sense, consider a sequence $(\varphi_k)\subset C^{\infty}_{0}(\mathbb{R}^{N})$ such that $\varphi_k \rightarrow u$ in $D^{1 ,\Phi}(\mathbb{R}^{N})$. Since $(u_n)$ is bounded, we get $J'(u_n)(\varphi_k-u_n)=o_n(1)\lVert \varphi_k\lVert-o_n(1)$. As $\Phi$ is convex, we can show that
\begin{align}\label{5.11}
Q(\varphi_k)-Q(u_n)-o_n(1)\geq\int_{\mathbb{R}^{N}}\int_{\mathbb{R}^{N}}\dfrac{K(x)K(y)F(u_n(x))f(u_n(y))(\varphi_k(y)-u_n(y))}{|x|^{\alpha}|x-y|^{\lambda}|y|^{\alpha}}dxdy.
\end{align}
Since $(\lVert \varphi_k\lVert)_{k\in\mathbb{N}}$ is a bounded sequence, it follows from \eqref{5.11} and from limits \eqref{1.16'} and \eqref{5.9} (or the limit \eqref{11.113'} and \eqref{5.10})  that
\begin{align*}
Q(\varphi_k)- \limsup_{n \rightarrow \infty} Q(u_n)\geq \limsup_{n \rightarrow \infty} \int_{\mathbb{R}^{N}}\int_{\mathbb{R}^{N}}\dfrac{K(x)K(y)F(u_n(x))f(u_n(y))(\varphi_k(y)-u_n(y))}{|x|^{\alpha}|x-y|^{\lambda}|y|^{\alpha}}dxdy,
\end{align*}
for every $k\in\mathbb{ N}$. Now, note that being $\Phi\in (\Delta_{2})$ and $\varphi_k \rightarrow u$ in $E$, we conclude from the inequality above that
\begin{align} \label {7.8}
Q(u) \geq \limsup_{n \rightarrow \infty} Q(u_n).
\end{align}
From \eqref{7.3} and \eqref {7.8},
\begin{align} \label{7.9}
Q(u) = \lim_ {n \rightarrow \infty} Q(u_n).
\end{align}
By \eqref{1.15'} (or \eqref{5.3}), we have 
\begin{align}\label{7.21}
\lim_{n\rightarrow\infty} \int_{\mathbb{R}^{N}}\int_{\mathbb{R}^{N}}\dfrac{K(x)K(y)F(u_n(x))F(u_n(y))}{|x|^{\alpha}|x-y|^{\lambda}|y|^{\alpha}}dxdy=\int_{\mathbb{R}^{N}}\int_{\mathbb{R}^{N}}\dfrac{K(x)K(y)F(u_n(x))F(u_n(y))}{|x|^{\alpha}|x-y|^{\lambda}|y|^{\alpha}}dxdy,
\end{align}
Therefore
\begin{align*}
0<c=&\lim_{n\rightarrow\infty} J(u_n)\\
=&\int_{\mathbb{R}^{N}} \Phi(|\nabla u|)dx+\int_{\mathbb{R}^ {N}}V(x) \Phi(|u|)dx-\int_{\mathbb{R}^{N}}\int_{\mathbb{R}^{N}}\dfrac{K(x)K(y)F(u(x))F(u(y))}{|x|^{\alpha}|x-y|^{\lambda}|y|^{\alpha}}dxdy\\
=&J(u),
\end{align*}
that is, $u\neq 0$.

Now, we prove that the solution obtained is a
ground state solution. Let us recall the definition of a ground state solution:
\begin{definition}
	A weak solution $u\in E$ of $(P)$ is called a ground state solution if it has	the least energy, i.e., we say, the solution $u$ is ground state solution of $(P)$ if
	\begin{align}
		J(u)=b=\inf_{u\in \mathcal{S}} J(u)
	\end{align}
	where $\mathcal{S}$ is the set of all critical points of the functional $J$.
\end{definition}

In order to prove the result below, we will use the following continuity result:

\begin{lemma}\label{2.27}
	The function $u\mapsto J'(u)\cdot u$ is continuous from $E$ to $\mathbb R$.
\end{lemma}

The above lemma is immediate whenever $J \in C^{1}(E,\mathbb{ R})$.

\begin{lemma}\label{lem1}
	Assume that $(V, K) \in \mathcal{K}_1$ (or $(V, K) \in \mathcal{K}_2$) and $f$ satisfies $(f_1)-(f_4)$ ( or $(f_2), (f_3), (f_4), (f_5)$). For each $v\in E\setminus\{0\}$ the function $\psi_v(s)=J(sv)$ has the following properties:
	\begin{itemize}
		\item [$(\psi_1)$] there is a bounded closed interval $[a_v,b_v]$ (which can be degenerate) such that $0<a_v$ and $J'(sv)\cdot v>0$, for all $ s<a_v$
		\item [$(\psi_2)$] $0<\displaystyle \max_{s>0}J(sv)=J(\tau v)$, for all $\tau\in[a_v,b_v]$, $J (sv)>0$ in $s\in (0,a_v)$
		\item [$(\psi_3)$] $J(\tau v)<\displaystyle \max_{s>0}J(sv)$, for all $\tau\notin [a_v,b_v]$
		\item [$(\psi_4)$] There are $s_v>b_v$ and $\delta_v>0$ such that $J'(su)\cdot u<0$ and $J(su)<0$, for all $ s\geq s_v$ and $u\in B_{\delta_v}(v)$.
	\end{itemize}
\end{lemma}

\begin{proof}
Fixed $v\neq0$, the function $h(t)=J(tv)$ has derivative $h'(t)=J'(tv)\cdot v$. As in the Lemmas \ref{5.5} and \ref{5.5'}, there will be $r>0$ such that $J'(v)\cdot v>0$, for all $0<\|v\|\leq R$. Hence,
\begin{align}\label{2.25}
	J(sv)=\int_0^s J'(tv)\cdot vdt>0, \mbox { for } 0<s<\frac r{\|v\|}.
\end{align}
From $(f_4)$, there exists $s_v>0$ such that
\begin{align}\label{2.26}
J(sv)<0, \text{ for all } s>s_v
\end{align}
thus, $\displaystyle \max_{s>0}J(sv)=J(\tau v)>0$ for some $\tau\in (0,s_v)$. By definition of the $J$ functional, we have
{\small\begin{align*}
	\frac{J'(\tau v)v}{\tau ^{m-1}}=&\int_{\mathbb{R}^{N}} \frac{\phi(|\nabla tv|)|\nabla v|^2}{t^{m-2}} dx+\int_{\mathbb{R}^{N} } V(x)\frac{\phi(|tv|)|v|^2}{t^{m-2}} dx-\int_{\mathbb{R}^{N}}\int_{\mathbb{R}^{N}}\dfrac{K(x)K(y)F(tv(x))f(tv(y))v(y)}{t^{{\frac{m}{2}}}|x|^{\alpha}|x-y|^{\lambda}|y|^{\alpha}t^{\frac{m}{2}-1}}dxdy.
	\end{align*}}
Using the hypothesis $(\phi_4)$, we can conclude that the function $$t\mapsto\int_{\mathbb{R}^{N}} \frac{\phi(|\nabla tv|)|\nabla v| ^2}{t^{m-2}} dx+\int_{\mathbb{R}^{N} } V(x)\frac{\phi(|tv|)|v|^2}{t^{ m-2}} dx$$ is nonincreasing, since the hypothesis $(f_2)$ guarantees that the function $$ t \mapsto \int_{\mathbb{R}^{N}}\int_{\mathbb{R }^{N}}\dfrac{K(x)K(y)F(tv(x))f(tv(y))v(y)}{t^{{\frac{m}{2}} }|x|^{\alpha}|x-y|^{\lambda}|y|^{\alpha}t^{\frac{m}{2}-1}}dxdy$$ is nondecreasing. Therefore, by \eqref{2.25} and \eqref{2.26} there will be an interval $[a_v,b_v]$ such that $h'(\tau)>0$ in $t<a_v$, $h'(\tau )<0$ in $\tau>b_v$ and $h'(\tau)=0$ in the interval $[a_v,b_v]$. The conclusion $(\psi_1)$, $(\psi_2)$ and $(\psi_3)$ is immediate. The property $(\psi_{4})$ follows from the previous items together with \eqref{2.26} and with the continuity of $J$ and $v \mapsto J'(v)v$.

\end{proof}

 In the proof of the lemma below, we have adapted the ideas presented by Willem, which can be found in Theorem $4.2$ in \cite{Willem}.
 \begin{proposition}
 	If $u\in E$ is a nontrivial solution for $(P)$ such that $J(u)=c$, where $c$ is the level given in \eqref{11.39}. Then $c=\inf_{u\in \mathcal{S}} J(u)$
 	where $\mathcal{S}$ is the set of all critical points of the functional $J$.
 \end{proposition}
\begin{proof}
By condition $(f_4)$, we can fix without losing generality $e\in E$ such that $J(e)<0$ and $J'(e)\cdot e<0$. Consider the following sets:
	\begin{align*}
		\Gamma=\{\gamma:[0,1]\to E: \gamma(0)=0,\;\gamma(1)=e\},\;\;
		\Gamma_0=\{\gamma:[0,1]\to E: \gamma(0)=0,\;J(\gamma(1))<0\}	\end{align*}	\begin{align*}
		\mathcal N =\{v\in E\setminus\{0\}: J'(v)\cdot v=0\},\;\;
		\mathcal S =\{v\in E\setminus\{0\}: J'(v)=0\}.
	\end{align*}
We will compare the following numbers:
	\begin{align*}
		c=\inf_{\gamma\in	\Gamma}\max_{t\in[0,1]}J(\gamma(t)) ,\;\;	c_0=\inf_{\gamma\in	\Gamma_o}\max_{t\in[0,1]}J(\gamma(t)),\;\;d=\inf_{v\neq 0}\max_{s>0}J(sv),\;\;
		a=\inf_{v\in\mathcal N}J(v),\;\;b=\inf_{v\in\mathcal S}J(v).
	\end{align*}
Let us see some immediate inequalities:
\begin{itemize}
	\item [$(i)$] It is obvious that $c_0\leq c$ and $a\leq b$;
	\item [$(ii)$] Let us see $c_0\leq d$. Note that if $v\neq 0$, then the path $\gamma(t)=ts_vv$ is such that $\gamma(0)=0$ and $J(\gamma(1))=J(s_vv)< 0$. Therefore, $\gamma\in \Gamma_0$ and
	\[
	c_0\leq \max_{t\in[0,1]}J(\gamma(t))=\max_{t>0}J(sv),
	\]
	that is, $c_0$ is a lower bound for the definition of $d$. The affirmation is justified.
		\item [$(iii)$] Let us show that $a\leq c$. In fact, fix $\gamma\in \Gamma$. Just check that $J'(\gamma(1))\cdot \gamma(1)=J'(e)\cdot e<0<J'(\gamma(t))\cdot \gamma(t)$ for $t>0$ small enough. Having the Lemma \ref{2.27} true, we can use the Intermediate Value Theorem to guarantee the existence of $t_1\in(0,1)$ such that $J'(\gamma(t_1))\cdot \gamma(t_1 )=0$, so $ \gamma(t_1)\in \mathcal N$. Thus $a\leq J(\gamma(t_1))\leq \displaystyle\max_{t\in[0,1]}J(\gamma(t))<c+\varepsilon$, and therefore the inequality $a\leq c$ is shown.
		\item [$(iv)$] Now $c\leq c_0$. Let $\gamma\in \Gamma$. The idea is to define a function $\tilde\gamma\in \Gamma $ such that
$$
		\max_{t\in[0,1]}J(\gamma(t))=\max_{t\in[0,1]}J(\tilde\gamma(t)).$$
For this, define $\tilde\gamma:[0,1]\to E$ as follows:
$$
		\tilde\gamma(t)=\gamma (2t), \mbox { for }t\in [0,\frac12].
	$$
It remains to define the function $\tilde\gamma$ to values $t\in\displaystyle\left[\frac12,1\right]$. Remember that $J(e)<0$ and $J(\tilde\gamma(\frac12))=J(\gamma(1))<0$. Being $\gamma(1)=v_1$, consider any point $u$ of the segment $[e,v_1]$. We cover this compact segment with a finite number of balls $B_{\delta_u}(u)$ obtained through the property $(\psi_4)$, that is, $[e,v_1]\subset B_{\delta_{u_1}} (u_1)\cup B_{\delta_{u_2}}(u_2)\cup \cdots \cup B_{\delta_{u_n}}(u_n)$. Consider $\lambda=\max\{s_{u_1}, s_{u_2},\cdots, s_{u_2}\}$, numbers given by \eqref{2.26} and $(\psi_4)$. Set the $\tilde \gamma:\left[\displaystyle\frac12,1\right]\to E$ the polygonal line segment from $v_1$ going to $\lambda v_1$, then connecting $\lambda v_1$ to $\lambda e$ and finally, connecting $\lambda e$ to the point $e$. It is easily shown that $\tilde\gamma\in \Gamma$, $J(\tilde\gamma(t))<0$ for all $t\in\left[\displaystyle\frac12,1\right]$ and therefore, $\displaystyle \max_{t\in[0,1]}J(\gamma(t))=\max_{t\in[0,1]}J(\tilde\gamma(t))$. Showing that $c_0\geq c$.
		
		
		\item [$(v)$] Let us see that $d\leq a$. Fix $v\in \mathcal N$ so that $J(v)<a+\varepsilon$. In the proof of the Lemma \ref{lem1} the function defined by $\psi_v(t)=J(tv)$ satisfies $\psi_v'(t)=0$ only if $t\in [a_v,b_v]$. Consider $v\in \mathcal{N}$ and note that $\psi_v'(1)=J'(v).v=0$, this implies that $1\in [a_v,b_v]$. Knowing that the function $\psi_v$ reaches a maximum in the interval $[a_v,b_v]$, we will obtain $\psi_v(t)\leq \psi_v(1)$ for all $t>0$, because $\psi_v$ is constant in $[a_v,b_v]$. In light of this,
		\[d\leq \max_{s>0}J(sv)\leq J(v)<a+\varepsilon.\]
		If $\varepsilon$ is arbitrary, we have $d\leq a$.
	\end{itemize}		

	Finally, consider $u\in \mathcal S$ satisfying $J(u)=c$. By the inequalities above we can conclude that $a=b=c=c_o=d$.

\end{proof}

\subsection{Proof of Theorem \ref{Teo1} and \ref{Teo1.1}}\label{sec1} Assuming the assumptions of Theorem \ref{Teo1}, the above argument guarantees the existence of a nonnegative ground state solution for problem $(P)$, thus showing the first part of Theorem \ref{Teo1}. Now, to show to study the boundedness of nonnegative solutions of the problem $(P)$ we will make heavy use of hypothese $2\alpha+\lambda<2\ell$.

Now, we begin by presenting a technical result, which is an adaptation of a result that can be found in \cite{Ailton}.
\begin{lemma}\label{22.6}
	Let  $u\in E$ be a nonnegative solution of $(P)$, $x_0 \in  \mathbb{R}^N$ and $R_0>0$. Then 
	{\footnotesize\begin{align*}
	\int_{\mathcal{A}_{k,t}} |\nabla u|^\ell dx\leq C \left( \int_{\mathcal{A}_{k,s}}\left|\dfrac{u-k}{s-t}\right|^{\ell^*}dx +(k^{\ell^*}+1) |\mathcal{A}_{k,s}|\right)+ C \left( \int_{\mathcal{A}_{k,s}}\left|\dfrac{u-k}{s-t}\right|^{\ell^*}dx +(k^{\ell^*}+1) |\mathcal{A}_{k,s}|\right)^{\frac{1}{s}}
	\end{align*}}
	where $0<t<s<R_0$, $k>1$, $ \mathcal{A}_{k,\rho}=\{x\in B_{\rho}(x_0): u(x)>k\}$ and $C>0$ is a constant that does not depend on $k$.
\end{lemma}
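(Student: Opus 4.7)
The plan is a De Giorgi-Moser style Caccioppoli argument adapted to the Orlicz/Choquard setting. Pick a cutoff $\eta\in C_0^\infty(\mathbb{R}^N)$ with $\eta\equiv 1$ on $B_t(x_0)$, $\mathrm{supp}(\eta)\subset B_s(x_0)$, $0\le\eta\le 1$, and $|\nabla\eta|\le 2/(s-t)$. Since Proposition \ref{0.4} guarantees that $u$ is a genuine weak solution, I would test with $\varphi=\eta^m(u-k)^+\in E$. Expanding $\nabla\varphi=m\eta^{m-1}\nabla\eta\,(u-k)^+ +\eta^m\nabla u\,\mathbf{1}_{\{u>k\}}$ and using that the $V$-contribution $\int V(x)\phi(u)u\,\eta^m(u-k)^+dx$ is nonnegative (hence can be discarded from the left of the resulting identity), I obtain
\begin{equation*}
\int_{\mathcal{A}_{k,s}}\phi(|\nabla u|)|\nabla u|^2\eta^m\,dx\le m\int_{\mathcal{A}_{k,s}}\phi(|\nabla u|)|\nabla u|\,|\nabla\eta|\,\eta^{m-1}(u-k)\,dx+|\mathrm{Ch}|,
\end{equation*}
where $\mathrm{Ch}$ denotes the Choquard double-integral on the right of the weak formulation.

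By $(\phi_3)$ and $(\phi_5)$, $\phi(t)t^2\ge\ell\,\Phi(t)\ge\ell C|t|^m$, so the left side dominates $c\int_{\mathcal{A}_{k,t}}|\nabla u|^m\,dx$. For the cross term, $(\phi_3)$--$(\phi_4)$ together with Lemma \ref{0.3} give $\phi(t)t\le C(t^{m-1}+1)$, and Young's inequality with conjugate exponents $m$ and $m/(m-1)$ lets me absorb a small multiple of $\int|\nabla u|^m\eta^m$ back into the left side, leaving
\begin{equation*}
\int_{\mathcal{A}_{k,t}}|\nabla u|^m\,dx\le C\int_{\mathcal{A}_{k,s}}\Big(\frac{u-k}{s-t}\Big)^{\!m}dx+C|\mathcal{A}_{k,s}|+|\mathrm{Ch}|.
\end{equation*}
Since $\bigl((u-k)/(s-t)\bigr)^m\le\bigl((u-k)/(s-t)\bigr)^{m^*}+1$ pointwise, the first summand takes the $m^*$-form appearing in the lemma, with the constant-$1$ part absorbed into $C|\mathcal{A}_{k,s}|$.

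For the Choquard contribution, set $G(x)=\int_{\mathbb{R}^N}K(y)F(u(y))|x-y|^{-\lambda}dy$. Applying Proposition \ref{Hardy} to $\|G\|_{L^{2N/\lambda}}$ and Hölder's inequality in the conjugate pair $(2N/\lambda,s)$ (with $s=2N/(2N-\lambda)$ the HLS exponent), I get
\begin{equation*}
|\mathrm{Ch}|\le C\|KF(u)\|_{L^s(\mathbb{R}^N)}\Big(\int_{\mathcal{A}_{k,s}}K^s|f(u)|^s(u-k)^s\eta^{ms}\,dx\Big)^{\!1/s},
\end{equation*}
where $\|KF(u)\|_{L^s}$ is finite by Lemma \ref{01.290} (or Lemma \ref{01.291}) because $u\in E$. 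Using $K\in L^\infty$ from $(K_0)$, the growth bound \eqref{11.0'} on $|f|^s$, and the dichotomy $\{k<u\le 2k\}$ (where $u\le 2k$ and $u^{q}\le C k^{m^*}$ for any $q\le m^*$ since $k>1$) versus $\{u>2k\}$ (where $u\le 2(u-k)$ so $u^{m^*}\le C(u-k)^{m^*}$), together with the Orlicz growth scales of $A$ and $\Phi_*$ from Lemmas \ref{0.3}--\ref{0.5} and the fact that $a_2\le\ell^*\le m^*$, one derives
\begin{equation*}
\int_{\mathcal{A}_{k,s}}K^s|f(u)|^s(u-k)^s\,dx\le C\int_{\mathcal{A}_{k,s}}(u-k)^{m^*}dx+C(k^{m^*}+1)|\mathcal{A}_{k,s}|.
\end{equation*}
Raising to the $1/s$ power, and converting $(u-k)^{m^*}$ into $\bigl((u-k)/(s-t)\bigr)^{m^*}$ at the cost of the factor $(s-t)^{m^*}\le R_0^{m^*}$ (absorbed into $C$), produces exactly the second summand of the claimed bound.

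The main obstacle I expect is precisely this Choquard accounting: converting $|f(u)|^s(u-k)^s$ into a sum of a term of order $(u-k)^{m^*}$ and a ``boundary'' piece of order $k^{m^*}$, using the Orlicz growth of $a$ and $\phi_*$ via the bracketing $m\le a_2\le\ell^*\le m^*$ and the two regimes $u\le 2k$ versus $u>2k$. The gradient/Young step is standard Caccioppoli algebra, made only mildly delicate by the non-power behavior of $\phi$ near $0$, whose error is absorbed into the slack $C|\mathcal{A}_{k,s}|$ term.
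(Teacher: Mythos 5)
Your proposal is correct and follows essentially the same route as the paper: the same test function $\zeta^m(u-k)^+$, discarding the nonnegative $V$-term, the bilinear Hardy--Littlewood--Sobolev bound with exponent $s$ on the Choquard term via the growth \eqref{11.0'}, and the final conversion to $m^*$-powers plus the $(k^{m^*}+1)|\mathcal{A}_{k,s}|$ remainder. The only (harmless) differences are cosmetic: you absorb the gradient cross-term with the power bound $\phi(t)t\leq C(t^{m-1}+1)$ and classical Young, invoking $(\phi_5)$ early, and you handle $|f(u)|^s(u-k)^s$ by the explicit dichotomy $u\leq 2k$ versus $u>2k$, whereas the paper uses the Orlicz--Young inequality together with Lemma \ref{0.51} (keeping $\Phi(|\nabla u|)$ and using $(\phi_5)$ only at the end) and the Young-type estimates \eqref{11.7}--\eqref{11.7'}.
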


\noindent{\bf{Proof:}}
Let $u\in E$ be a weak solution nonnegative of $(P)$ and $x_0\in \mathbb{R}^{N}$. Moreover, fix $0 < t < s < R_0 $ and $\zeta \in C^{\infty}_0(\mathbb{R}^N)$ verifying 
\begin{align*}
0\leq \zeta \leq 1, \quad supp (\zeta)\subset B_{s}(x_0),\quad \zeta\equiv 1\;\text{ on }\; B_t(x_0)\;\;\text{ and }\;\; |\nabla \zeta|\leq \dfrac{2}{s-t}.
\end{align*}
For $k >1$, set $\varphi = \zeta^m(u-k)^{+}$ and
\begin{align*}
J=\int_{\mathcal{A}_{k,s}}\Phi(|\nabla u|)\zeta ^mdx,
\end{align*}
Using $\varphi$ as a test function and $\ell \Phi(t)\leq \phi(t)t^2$, we find
%
\begin{align*}
\ell J\leq &m \int_{\mathcal{A}_{k,s}}\zeta^{m-1}(u-k)^+ \phi(|\nabla u|)|\nabla u| |\nabla\zeta| dx
-\int_{\mathcal{A}_{k,s}}V(x) \phi( u) u\zeta^m (u-k)^+ dx\\&+\int_{\mathcal{A}_{k,s}}\int_{\mathbb{R}^{N}}\dfrac{K(x)K(y)F(u(x))f(u(y))\zeta^m(y) (u(y)-k)^+}{|x-y|^{\lambda}}dxdy\\\leq&\int_{\mathcal{A}_{k,s}}\zeta^{m-1}(u-k)^+ \phi(|\nabla u|)|\nabla u| |\nabla\zeta| dx\\&+C_1(s,\lambda,N)
 \left|\int_{\mathbb{R}^{N}}K(x)^s|F(u)|^sdx\right|^{\frac{1}{s}}\left|\int_{\mathbb{R}^{N}}W(x)|f(u)|^s|\zeta^m (u-k)^+|^sdx\right|^{\frac{1}{s}}
\end{align*}
By $(f_1)$, given $\eta > 0$, there exists $C_\varepsilon>0$ such that
\begin{align*}
K(x)^sf(t)^s\leq \frac{\varepsilon}{s}K(x)^sa(t)t^{2-s}+C_{\varepsilon}K(x)^sz(t)t^{2-s}, \;\;\;\forall t\geq0\text{ and } x\in\mathbb{R}^N.
\end{align*}
 Thus, 
\begin{align}\label{11.2}
\begin{split}
\ell J\leq& m \int_{\mathcal{A}_{k,s}}\zeta^{m-1}(u-k)^+ \phi(|\nabla u|)|\nabla u| |\nabla\zeta| dx
\\
&+C_2\left[\int_{\mathcal{A}_{k,s}}K^{s}(x) a(| u|) u^{2-s}(\zeta^m (u-k)^+)^s dx+\int_{\mathcal{A}_{k,s}}K^{s}(x) z(| u|) u^{2-s}(\zeta^m (u-k)^+)^s dx\right]^\frac{1}{s}
\end{split}
\end{align}
where $C_2=C_1(s,\lambda,N)
\left(\int_{\mathbb{R}^{N}}K(x)^s|F(u)|^sdx\right)^{\frac{1}{s}}$. For each $\tau \in(0, 1)$, the Young’s inequalities gives
%
%
\begin{align}\label{11.0}
\begin{split}
\phi(|\nabla u|)|\nabla u||\nabla \zeta| \zeta^{m-1}(u-k)^+\leq \tilde{\Phi }( 	\phi(|\nabla u|)|\nabla u|\zeta^{m-1}\tau)+C_3\Phi\Big(\Big|\dfrac{u-k}{s-t}\Big|\Big).
\end{split}
\end{align}
It follows from Lemma \ref{0.51}, 
\begin{align}\label{11.1}
\tilde{\Phi }( 	\phi(|\nabla u|)|\nabla u|\zeta^{m-1}\tau)\leq C_4(\tau \zeta^{m-1})^\frac{m}{m-1}{\Phi }(	|\nabla u|).
\end{align}	
From \eqref{11.2}, \eqref{11.0} and \eqref{11.1}, 
\begin{align*}
\ell J\leq& m  C_4\tau^\frac{m}{m-1}\int_{\mathcal{A}_{k,s}} \Phi (	|\nabla u|) \zeta^{m}+mC_3\int_{\mathcal{A}_{k,s}} \Phi\Big(\Big|\dfrac{u-k}{s-t}\Big|\Big)dx\\
&+C_2\left[\int_{\mathcal{A}_{k,s}}K^{s}(x) a(| u|) u^{2-s}(\zeta^m (u-k)^+)^s dx+\int_{\mathcal{A}_{k,s}} K^{s}(x) z(| u|) u^{2-s}(\zeta^m (u-k)^+)^s dx\right]^\frac{1}{s}
\end{align*}
Choosing $\tau\in (0,1)$  such that $0<m  C_4\tau^\frac{m}{m-1}<\ell$, we derive
{\footnotesize\begin{align}\label{11.6}
J\leq C_5\int_{\mathcal{A}_{k,s}} \Phi\Big(\Big|\dfrac{u-k}{s-t}\Big|\Big)dx+C_2\left[\int_{\mathcal{A}_{k,s}}K^{s}(x) a(| u|) u^{2-s}(\zeta^m (u-k)^+)^s dx+\int_{\mathcal{A}_{k,s}} K^{s}(x) z(| u|) u^{2-s}(\zeta^m (u-k)^+)^s dx\right]^\frac{1}{s}
\end{align}}
By Young’s inequalities,
\begin{align}\label{11.7}
z( u)u^{2-s}(\zeta^m (u-k)^+)^s \leq C_6Z\left(\Big|\dfrac{u-k}{s-t}\Big| \right)+ C_6Z(k).
\end{align}
and
\begin{align}\label{11.7'}
a( u)u^{2-s}(\zeta^m (u-k)^+)^s \leq C_6A\left(\Big|\dfrac{u-k}{s-t}\Big| \right)+ C_6A(k).
\end{align}
Therefore, a combination of \eqref{11.6} and \eqref{11.7}, yields 
{\footnotesize\begin{align}\label{111}
J\leq C_{7}\int_{\mathcal{A}_{k,s}} \Phi\Big(\Big|\dfrac{u-k}{s-t}\Big|\Big)dx+C_{7}\left[\int_{\mathcal{A}_{k,s}}A\Big(\Big|\dfrac{u-k}{s-t}\Big|\Big)dx+ \int_{\mathcal{A}_{k,s}}A(k)dx+\int_{\mathcal{A}_{k,s}} Z\Big(\Big|\dfrac{u-k}{s-t}\Big|\Big)dx+ \int_{\mathcal{A}_{k,s}}Z(k)dx\right]^{\frac{1}{s}}.
\end{align}}
 Now, using that $\ell\leq m<a_2<\ell^* $ and applying the Lemmas \ref{0.3}, \ref{0.5} and the Remark \ref{5.02} for functions $\Phi$, $A$ and $\Phi_{*},$ respectively, we get
\begin{align*}
\Phi\Big(\Big|\dfrac{u-k}{s-t}\Big|\Big)\leq \Phi(1)\Big( \Big|\dfrac{u-k}{s-t}\Big|^{\ell^*}+1 \Big),
\end{align*}
\begin{align*}
A\Big(\Big|\dfrac{u-k}{s-t}\Big|\Big)\leq A(1)\Big( \Big|\dfrac{u-k}{s-t}\Big|^{\ell^*}+1 \Big)\text{ and }A(k)\leq(k^{\ell^*}+1).
\end{align*}
and
\begin{align*}
Z\Big(\Big|\dfrac{u-k}{s-t}\Big|\Big)\leq Z(1)\Big( \Big|\dfrac{u-k}{s-t}\Big|^{\ell^*}+1 \Big)\text{ and }Z(k)\leq(k^{\ell^*}+1).
\end{align*}
From \eqref{111} and the inequality above,
\begin{align*}
J\leq C_{8}\left(\int_{\mathcal{A}_{k,s}} \Big|\dfrac{u-k}{s-t}\Big|^{\ell^*}dx + (k^{\ell^*}+1) |\mathcal{A}_{k,s}|\right)+C_{8}\left(\int_{\mathcal{A}_{k,s}} \Big|\dfrac{u-k}{s-t}\Big|^{\ell^*}dx + (k^{\ell^*}+1) |\mathcal{A}_{k,s}|\right)^{\frac{1}{s}}.
\end{align*}

\qed

\begin{lemma}\label{22.21}
	Let $u \in E$ be a nonnegative solution of $(P)$. Then, $u \in L^{\infty}_{loc} (\mathbb{R}^N)$.
\end{lemma}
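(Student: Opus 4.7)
The strategy is a De Giorgi iteration built on the Caccioppoli-type estimate of Lemma~\ref{22.6}. Fix $x_0 \in \mathbb{R}^N$, $R_0 > 0$, and put $\bar r = R_0/2$; I aim to show $u \in L^\infty(B_{\bar r}(x_0))$. The first step is to upgrade Lemma~\ref{22.6} by combining it with the Sobolev embedding $W^{1,m}_0 \hookrightarrow L^{m^*}$. For $t' < t < s \leq R_0$ and $k > 1$, choose a cutoff $\eta \in C_c^\infty(B_t(x_0))$ with $\eta \equiv 1$ on $B_{t'}(x_0)$ and $|\nabla \eta| \leq 2/(t-t')$; applying Sobolev to $(u-k)^+\eta$ and inserting Lemma~\ref{22.6} together with a H\"older bound for $\int_{\mathcal{A}_{k,t}}(u-k)^m\,dx$, I arrive at
\begin{equation*}
U(k,t')^{m/m^*} \leq \Lambda + \Lambda^{1/s}, \qquad \Lambda := \frac{C\,U(k,s)}{(s-t')^{m^*}} + C(k^{m^*}+1)|\mathcal{A}_{k,s}|,
\end{equation*}
where $U(k,\rho) := \int_{\mathcal{A}_{k,\rho}}(u-k)^{m^*}\,dx$.

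I then run the iteration. Let $r_n = \bar r(1+2^{-n})$ and $k_n = M(2-2^{-n})$ with $M \geq 1$ to be fixed later, and set $U_n = U(k_n, r_n)$. Applying the previous inequality with $t' = r_{n+1}$, $s = r_n$, $k = k_{n+1}$, and using the Chebyshev-type bound $|\mathcal{A}_{k_{n+1},r_n}| \leq U_n / (k_{n+1}-k_n)^{m^*} = C\,2^{(n+1)m^*} M^{-m^*} U_n$ together with $k_{n+1}^{m^*} \leq (2M)^{m^*}$, a routine manipulation produces a recurrence
\begin{equation*}
U_{n+1} \leq C_\star\, b^n \bigl(U_n^{m^*/m} + U_n^{m^*/(ms)}\bigr),
\end{equation*}
with $b > 1$ and $C_\star$ depending only on structural data, $R_0$, and $M$. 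Because $u \in E \hookrightarrow L^{\Phi_*}(\mathbb{R}^N)$ and hypothesis $(\phi_5)$ yields $\Phi_*(t) \geq C|t|^{m^*}$, we have $u \in L^{m^*}(\mathbb{R}^N)$, so $U_0 = \int_{\mathcal{A}_{M,R_0}}(u-M)^{m^*}\,dx \to 0$ as $M \to \infty$. Picking $M$ large enough that $U_0$ falls below the termination threshold of the iteration gives $U_n \to 0$, and hence $u \leq 2M$ a.e.\ on $B_{\bar r}(x_0)$, which is the desired local boundedness.

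The decisive technical point is that both exponents $m^*/m$ and $m^*/(ms)$ are strictly greater than $1$: the first because $m < N$, and the second because $m^*/(ms) > 1$ is equivalent to $s < m^*/m$, i.e.\ to $\lambda < 2m$ --- precisely the standing hypothesis of the statement. With both powers super-linear, the mixed recurrence is handled by splitting into the regimes $U_n \geq 1$ and $U_n < 1$, in each of which one of the two terms dominates, reducing to a single super-linear recurrence to which the classical geometric iteration lemma applies. Verifying that the $\Lambda^{1/s}$ contribution of Lemma~\ref{22.6} still yields a super-linear exponent after the Sobolev upgrade is the only genuine obstacle; everything else is routine bookkeeping.
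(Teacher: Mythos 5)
Your proposal is correct and follows essentially the same route as the paper: a De Giorgi iteration built on Lemma~\ref{22.6}, the Sobolev embedding $W^{1,m}\hookrightarrow L^{m^*}$, a Chebyshev bound on $|\mathcal{A}_{k_{n+1},r_n}|$, and the classical fast-convergence lemma, with the same crucial observation that $\frac{m^*}{sm}>1$ is exactly $\lambda<2m$. The only cosmetic differences are that the paper forces $J_n\leq 1$ (by choosing the level $K>4M^*$) to reduce the mixed-exponent recurrence to the single worse exponent rather than splitting regimes as you do, and note that in your recurrence the $M$-dependence actually cancels between $k_{n+1}^{m^*}\leq (2M)^{m^*}$ and the factor $M^{-m^*}$ from Chebyshev, so $C_\star$ is independent of $M$ — which is what makes the final choice of $M$ large legitimate.
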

\noindent{\bf{Proof:}} To begin with, consider $\Lambda$ a compact subset on $\mathbb{R}^N$. Fix $R_1\in (0,1)$, $x_0\in\Lambda$ and define the sequences
\begin{align*}
\sigma_n=\dfrac{R_1}{2}+\dfrac{R_1}{2^{n+1}}\;\;\text{ and }\;\;\overline{\sigma}_n=\dfrac{\sigma_n +\sigma_{n+1}}{2}\;\;\text{ for }n=0,1,2,\cdots.
\end{align*}
Note that
\begin{align*}
\sigma_n\downarrow\dfrac{R_1}{2}\quad\text{ and }\quad \sigma_{n+1}<\overline{\sigma}_n <\sigma_n<R_1.
\end{align*}
 
Since $E$ is continuously embedded in $W^{1,\ell}_{loc}(\mathbb{R}^N)$, it follows from Lebesgue dominated convergence theorem,
\begin{align}
\lim_{M\rightarrow\infty} \int_{B_{R_1}(x_0)}\big((u-M)^+\big)^{\ell^*}dx=0,
\end{align}
hence, there is $M^*\geq 1$ which depends on $x_0$ and $R_1$, such that
\begin{align}\label{5.6}
\int_{B_{R_1}(x_0)}\big((u-M)^+\big)^{\ell^*}dx\leq 1 , \;\;\text{ for }\; M\geq M^*.
\end{align}
Now, consider $K>4M^*$ and for every $n\in\mathbb{N}$ define
\begin{align*}
K_n=\dfrac{K}{2}\left(1-\dfrac{1}{2^{n+1}}\right)\;\;\text{ and }\;\;J_n=\int_{ \mathcal{A}_{K_n,\sigma_n}}\big((u-K_n)^+\big)^{\ell^*}dx,\;\text{ for }n=0,1,2,\cdots.
\end{align*}
and
\begin{align*}
\xi_n=\xi\left(\dfrac{2^{n+1}}{R_1}\Big(|x-x_0|-\dfrac{R_1}{2}\Big)\right),\;\; x\in\mathbb{R}^N\text{ and }n=0,1,2,\cdots
\end{align*}
where $\xi\in C^{1}(\mathbb{R})$ satisfies
\begin{align*}
0\leq \xi\leq 1,\quad \xi(t)=1\;\text{ for }\; t\leq \dfrac{1}{2}\quad\text{ and }\quad \xi(t)=0\;\text{ for }\; t\geq \dfrac{3}{4}..
\end{align*}
From definition of  $\xi_{n}$,
\begin{align*}
\xi_{n}=1\;\;\text{ in }\;B_{\sigma_{{n}+1}}(x_0)\quad\text{ and }\quad\xi_{n}=0\;\text{ outside }\;B_{\overline{\sigma}_{n}}(x_0),
\end{align*}
consequently
\begin{align}\label{22.23}
\begin{split}
J_{n+1}&\leq\int_{B_{R_1}(x_0)}\big((u-K_{n+1})^+\xi_{n}\big)^{\ell^*}dx\\
&\leq C_1\left(\int_{\mathcal{A}_{K_{n+1},\overline{\sigma}_{n}}}|\nabla((u-K_{+1})^+\xi_{n})|^{\ell}dx\right)^{\frac{\ell^{*}}{\ell}}\\&\leq C_2\left(\int_{\mathcal{A}_{K_{{n+1}},\overline{\sigma}_{n}}}|\nabla u|^\ell dx+ 2^{\ell{n}}\int_{\mathcal{A}_{K_{n+1}},\overline{\sigma}_{n}}((u-K_{{n+1}})^+)^{\ell}dx\right)^{\frac{\ell^{*}}{\ell}}
\end{split}
\end{align}
for some constant $C_2=C(N,\ell,R_1)>0$. Applying the Lemma \ref{22.6} to the previous inequality, we get
{\footnotesize\begin{align}\label{22.5}
\begin{split}
J_{{n+1}}^{\frac{\ell}{\ell^{*}}}\leq &C_3\Big(\int_{\mathcal{A}_{K_{n+1},{\sigma_{n}}}}\Big|\dfrac{u-K_{{n_k}+1}}{\sigma_{n}-\overline{\sigma_{n}}}\Big|^{\ell^*}dx +(K_{n+1} ^{\ell^*}+1)|\mathcal{A}_{K_{{n}+1},{\sigma_{n}}}|        +2^{\ell{n}}\int_{\mathcal{A}_{K_{{n}+1},\overline{\sigma}_{n}}}((u-K_{{n}+1})^+)^{\ell}dx\Big)\\
&+C_3\Big(\int_{\mathcal{A}_{K_{{n+1}},{\sigma_{n}}}}\Big|\dfrac{u-K_{{n}+1}}{\sigma_{n}-\overline{\sigma_{n}}}\Big|^{\ell^*}dx +(K_{{n}+1} ^{\ell^*}+1)|\mathcal{A}_{K_{{n}+1},{\sigma_{n}}}| \Big)^{\frac{1}{s}}
\end{split}
\end{align}}
where $C_3>0$ is a constant that depends only on $N$, $\ell$ and $R_1$. Being $|\sigma_{n} -\overline{\sigma}_{n}| =\dfrac{R_1}{2^{{n}+3}}$, we conclude that
{\footnotesize\begin{align}\label{22.24}
\begin{split}
J_{{n+1}}^{\frac{\ell}{\ell^{*}}}\leq C_4(N,\ell,R_1)\Big(&2^{\ell{n}}\int_{\mathcal{A}_{K_{{n}+1},{\sigma}_{n}}}((u-K_{{n}+1})^+)^{\ell^*}dx +(K^{\ell^*}+1)|\mathcal{A}_{K_{{n}+1},{\sigma_{n}}}| \\
&       +2^{\ell{n}}\int_{\mathcal{A}_{K_{{n}+1},\overline{\sigma}_{n}}}((u-K_{{n}+1})^+)^{\ell}dx\Big)\\
+ C_4(N,\ell,R_1)\Big(&2^{\ell{n}}\int_{\mathcal{A}_{K_{{n}+1},{\sigma_{n}}}}((u-K_{{n}+1})^+)^{\ell^*}dx +(K^{\ell^*}+1)|\mathcal{A}_{K_{{n}+1},{\sigma_{n}}}|\Big)^{\frac{1}{s}}
\end{split}
\end{align}}
Combined the inequality above with $t^\ell \leq t^{\ell^*} +1$, for $t \geq 0$ and using that $\overline{\sigma}_{n} <\sigma_{n}$, we get that
\begin{align}\label{22.9}
\begin{split}
J_{{n}+1}^{\frac{\ell}{\ell^{*}}}\leq C_4(N,\ell,R_1)\Big(&2^{\ell{n}}\int_{\mathcal{A}_{K_{{n}+1},{\sigma_{n_k}}}}((u-K_{{n}+1})^+)^{\ell^*}dx +(K^{\ell^*}+2^{\ell{n}}+1)|\mathcal{A}_{K_{{n}+1},{\sigma_{n}}}|\Big)\\
+ C_4(N,\ell,R_1)\Big(&2^{\ell{n}}\int_{\mathcal{A}_{K_{{n}+1},{\sigma_{n}}}}((u-K_{{n}+1})^+)^{\ell^*}dx +(K^{\ell^*}+2^{\ell{n}}+1)|\mathcal{A}_{K_{{n}+1},{\sigma_{n}}}|\Big)^{\frac{1}{s}}
\end{split}
\end{align}
On the other hand, since $K_{n+1}-K_{{n}}=\dfrac{K}{2^{{n}+3}}$,
\begin{align}\label{22.10}
\begin{split}
\left(\dfrac{K}{2^{{n}+3}}\right)^{\ell^*}\big|\mathcal{A}_{K_{{n}+1},{\sigma_{n}}}\big|&=(K_{{n}+1}-K_{{n}})^{\ell^*}\big|\mathcal{A}_{K_{{n}+1},{\sigma_{n}}}\big|\\
&\leq \int_{\mathcal{A}_{K_{{n}+1},{\sigma_{n}}}}(K_{{n}+1}-K_{{n}})^{\ell^*}dx\\
&\leq\int_{\mathcal{A}_{K_{{n}+1},{\sigma_{n}}}} \Phi_*((u-K_{n})^+)\chi_{\mathcal{A}_{K_{{n}+1},{\sigma_{n}}}}(x)\leq J_{{n}},
\end{split}
\end{align}
which yields
\begin{align}\label{22.12}
\begin{split}
\big|\mathcal{A}_{K_{{n}+1},{\sigma_{n}}}\big|\leq\dfrac{1}{\left(\dfrac{K}{2^{{n}+3}}\right)^{\ell^*}}J_{{n}}.
\end{split}
\end{align}
Thus,
\begin{align*}
\int_{\mathcal{A}_{K_{{n}+1},{\sigma_{n}}}}\big((u-K_{{n}+1})^+\big)^{\ell^*}dx&\leq \int_{\mathcal{A}_{K_{{n+1}},{\sigma_{n}}}}\big((u-K_{{n}})^+\big)^{\ell^*}dx+\int_{\mathcal{A}_{K_{{n}+1},{\sigma_{n}}}}\big(K_{{n}+1}-K_{n}\big)^{\ell^*}dx\\
&\leq \int_{\mathcal{A}_{K_{{n}},{\sigma_n}}}\big((u-K_{{n}})^+\big)^{\ell^*}dx+\big|K_{{{n+1}}}-K_{{n}}\big|^{\ell^*}\big|\mathcal{A}_{K_{{n}+1},{\sigma_{n}}}\big|\\
&\leq 2J_{n}
\end{align*}
and consequently
\begin{align}\label{22.25}
\begin{split}
J_{{n}+1}^{\frac{\ell}{\ell^{*}}}\leq& C_5(N,\ell,R_1)\Big(2^{\ell n+1}+2^{n(\ell+\ell^*)} +(2^{\ell n}+1)2^{n(\ell+\ell^*)}\Big)J_n\\
&+ C_5(N,\ell,R_1)\Big(2^{\ell n+1}+2^{n(\ell+\ell^*)} +(2^{\ell n}+1)2^{n(\ell+\ell^*)}\Big)^{\frac{1}{s}}J_n^{\frac{1}{s}}
\end{split}
\end{align}

Due to the fact that $K>4M^*$, we conclude from the inequality \ref{5.6} that
\begin{align*}
	J_n\leq \int_{B_{R_1}(x_0)}\big((u-M^*)^+\big)^{\ell^*}dx\leq 1,\;\text{ for } n=0,1,2,\cdots,
\end{align*}
hence,
\begin{align}\label{22.19}
J_{{n+1}}
\leq CD^{{n}}J_{{n}}^{1+\omega},
\end{align}
where $C=2C_5(N,\ell,R_1),$ $D=2^{2(\ell+\ell^*){\frac{\ell^{*}}{\ell}}}$ and $\omega=\frac{\ell^*}{s\ell}-1$.

We claim that
\begin{align*}
J_{0}\leq C^{-\frac{1}{\omega}} D^{-\frac{1}{\omega^2}}, \;\;\text{for }\; K\geq K^*.
\end{align*}
Indeed, note that,
\begin{align}\label{22.16}
\begin{split}
J_{0}&=\int_{\mathcal{A}_{K_{{0}},{\sigma_{0}}}}\big((u-K_{{0}})^+\big)^{\ell^*}dx\leq \int_{B_{R_1}(x_0)}\big((u-K_{{0}})^+\big)^{\ell^*}dx
\end{split}
\end{align}
Since $E$ is continuously embedded in $W^{1,\ell}_{loc}(\mathbb{R}^N)$, it follows from Lebesgue dominated convergence theorem,
\begin{align*}
\lim_{K\rightarrow\infty} \int_{B_{R_1}(x_0)}\big((u-K_{{0}})^+\big)^{\ell^*}dx=0.
\end{align*}
Therefore, there exists $K^*\geq 5M^*$ that depends on $x_0$, such that
\begin{align}\label{22.17}
\int_{B_{R_1}(x_0)}\big((u-K_{{0}})^+\big)^{\ell^*}dx\leq C^{-\frac{1}{\omega }} D^{-\frac{1}{\omega^2}} , \;\;\text{ for }\; K\geq K^*.
\end{align}
From \eqref{22.16} and \eqref{22.17},
\begin{align}\label{22.18}
J_{0}\leq C^{-\frac{1}{\omega}} D^{-\frac{1}{\omega^2}}, \;\;\text{for }\; K\geq K^*.
\end{align}
Fix $K = K^*$, by Lemma [\citenum{O.A}, Lemma 4.7], we deduce that
\begin{align*}
J_{n} \longrightarrow 0\;\;\text{ as }\;\;n\rightarrow \infty.
\end{align*}
On the other hand, 
\begin{align*}
\lim_{n\rightarrow\infty}J_{n}=\lim_{n\rightarrow\infty}\int_{\mathcal{A}_{K_{n},\sigma_{n}}}\big((u-K_{n})^+\big)^{\ell^*}dx=\int_{\mathcal{A}_{\frac{K^*}{2},\frac{R_1}{2}}}\big((u-\frac{K^*}{2})^+\big)^{\ell^*}dx
\end{align*}
Hence,
\begin{align*}
\int_{\mathcal{A}_{\frac{K^*}{2},\frac{R_1}{2}}}\big((u-\frac{K^*}{2})^+\big)^{\ell^*}dx=0,
\end{align*}
leading to
\begin{align*}
u(x)\leq \frac{K^*}{2}\;\text{ a.e. }\text{ in } B_{\frac{R_1}{2}}(x_0).
\end{align*}
Since $x_0$ is arbitrary and $\Lambda$ is a compact subset, the last inequality ensures that
\begin{align}\label{5.12}
u(x)\leq \frac{M}{2}\;\text{ a.e. }\text{in } \Lambda
\end{align}
for some constant $M>0$. By the arbitrariness of $\Lambda$, we conclude that $u\in L^{\infty}_{loc}(\mathbb{R}^{N})$. 

\qed

This Lemmas guarantees that the Theorem \ref{Teo1} is valid. The proof of Theorem \ref{Teo1.1} will be divided into the following Lemmas:

\begin{lemma}
	$u\in C^{1,\alpha}_{loc} (\mathbb{ R}^N)$.
\end{lemma}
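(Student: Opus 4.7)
The strategy is to rewrite $(P)$ in the form $-\Delta_{\Phi} u = G(x)$ with $G$ locally bounded, and then apply the Lieberman-type $C^{1,\alpha}$ regularity theorem for quasilinear elliptic equations whose principal part has $\phi$-growth (condition $(\phi_{7})$ is precisely what makes that machinery applicable). The key intermediate step is to establish that the Choquard convolution
\[
I(x):=\int_{\mathbb{R}^{N}}\dfrac{K(y)F(u(y))}{|x-y|^{\lambda}}\,dy
\]
is locally bounded, from which the local boundedness of the whole right-hand side follows immediately.

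First, by Lemma \ref{22.21} we already have $u\in L^{\infty}_{loc}(\mathbb{R}^{N})$, and since $V$ and $\phi$ are continuous, the zero-order term $V(x)\phi(|u|)u$ is automatically locally bounded. Thus it suffices to prove $I\in L^{\infty}_{loc}(\mathbb{R}^{N})$ and use that $K\in L^{\infty}$ and $f$ is continuous to conclude that
\[
G(x):=-V(x)\phi(|u|)u+I(x)K(x)f(u(x))\in L^{\infty}_{loc}(\mathbb{R}^{N}).
\]

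To estimate $I(x)$ on a compact set $\Lambda\subset B_{R}(0)$, I would split the integral into the near part over $B_{1}(x)$ and the far part over $B_{1}(x)^{c}$. On the near part, $u$ is bounded on $B_{R+1}(0)$ by local boundedness, so $F(u)$ is bounded there; since $K\in L^{\infty}$ and $\lambda<N$, we get $\int_{B_{1}(x)} K(y)|F(u)||x-y|^{-\lambda}\,dy\leq C\int_{B_{1}(0)}|z|^{-\lambda}\,dz<\infty$, uniformly in $x\in \Lambda$. For the far part, the bound $|F(t)|^{s}\leq C(A(|t|)+\Phi_{*}(|t|))$ established in the proof of Lemma \ref{01.290}, together with $E\hookrightarrow L^{A}_{K^{s}}\cap L^{\Phi_{*}}$, yields $KF(u)\in L^{s}(\mathbb{R}^{N})$ with $s=2N/(2N-\lambda)$. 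Applying Hölder's inequality,
\[
\int_{B_{1}(x)^{c}}\dfrac{K(y)|F(u)|}{|x-y|^{\lambda}}\,dy\leq \|KF(u)\|_{L^{s}(\mathbb{R}^{N})}\left(\int_{|z|>1}|z|^{-\lambda s'}\,dz\right)^{1/s'},
\]
and since $s'=s/(s-1)=2N/\lambda$ gives $\lambda s'=2N>N$, the last integral is finite. Hence $I\in L^{\infty}_{loc}(\mathbb{R}^{N})$.

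Now the weak solution $u$ satisfies $-\operatorname{div}(\phi(|\nabla u|)\nabla u)=G(x)$ on every bounded domain $\Omega\subset\mathbb{R}^{N}$ with $G\in L^{\infty}(\Omega)$. Invoking the regularity theorem of Lieberman (applied to the quasilinear structure prescribed by $(\phi_{7})$; see also the $C^{1,\alpha}$ results used in \cite{Lucas}, \cite{FN}), we conclude $u\in C^{1,\alpha}(\overline{\Omega})$ for some $\alpha\in(0,1)$ depending on $\Omega$ and the structural constants. Since $\Omega$ is arbitrary, this gives $u\in C^{1,\alpha}_{loc}(\mathbb{R}^{N})$.

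The main obstacle is controlling the far part of the Riesz-type convolution $I(x)$: we cannot use $L^{\infty}$ bounds on $F(u)$ globally, but the identification $KF(u)\in L^{s}(\mathbb{R}^{N})$ combined with the critical exponent relation $\lambda s'=2N$ (which in turn is a consequence of the very definition $s=2N/(2N-\lambda)$) is exactly what closes the argument. After that, the appeal to the Lieberman/Fukagai-Narukawa regularity theory is standard under $(\phi_{5})$, $(\phi_{6})$, $(\phi_{7})$.
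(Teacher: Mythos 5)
Your proposal is correct and follows the same overall strategy as the paper: first show that the Hartree term $I(x)=\int_{\mathbb{R}^N}K(y)F(u(y))|x-y|^{-\lambda}\,dy$ is bounded, so that after freezing it the equation becomes a quasilinear equation $-\Delta_\Phi u = G$ with locally bounded right-hand side, and then invoke the Lieberman-type $C^{1,\alpha}$ theory under $(\phi_5)$--$(\phi_7)$ (the paper carries out the small additional technicality of truncating $\phi$ and $f$ at the local $L^\infty$ level $M/2$ before checking the structure conditions, which you gloss as ``standard'' --- acceptable, but worth making explicit).

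Where you differ is in how the boundedness of $I(x)$ is obtained. The paper decomposes by the level set $\{u\geq 1\}$ versus $\{u\leq 1\}$, uses $|F(t)|^s\leq C\Phi_*(|t|)$ for $t\geq 1$, and handles the region $|x-y|\geq 1$ via the extra hypothesis $K\in L^1(\mathbb{R}^N)$; this yields a global $L^\infty$ bound on $I$. You instead decompose by $|x-y|<1$ versus $|x-y|\geq 1$: on the near part you use the local boundedness of $u$ (hence of $F(u)$), $K\in L^\infty$ and $\lambda<N$; on the far part you use $KF(u)\in L^s(\mathbb{R}^N)$ (from \eqref{01.111}) and H\"older with the dual exponent $s'=2N/\lambda$, so that $\lambda s'=2N>N$ makes the kernel integrable at infinity. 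This buys two things. First, your argument does not need $K\in L^1$ for this step (though it remains among the theorem's hypotheses and is used elsewhere). Second, your placement of the H\"older step only on the far region is the structurally sound choice: since $\lambda s'=2N$, the kernel $|x-y|^{-\lambda s'}$ is \emph{not} locally integrable, so H\"older in the form $\|K/|x-y|^\lambda\|_{s'}\|F(u)\|_s$ cannot be used near the singularity --- the singular region must be handled exactly as you do, via the local $L^\infty$ bound on $u$. The price is that you only get $I\in L^\infty_{loc}$ rather than $I\in L^\infty(\mathbb{R}^N)$, but local boundedness is all that the interior $C^{1,\alpha}$ estimate requires.
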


\begin{proof}
By $(f_1)$ together with the remark \ref{5.02}, there exists $C_1>0$ such that
\begin{align*}
|F(t)|^s\leq C_1\Phi_{*}(|t|),\;\;\forall t\geq1
\end{align*}
Hence, by Hölder inequality
{\footnotesize\begin{align*}
\int_{[u\geq1]} \dfrac{K(y)F(u(y))}{|x-y|^\lambda}dy\leq&\left(	\int_{[u\geq1]} \dfrac{K(y)^{\frac{s}{s-1}}}{|x-y|^{\frac{\lambda(s-1)}{s}}}dy\right)^{\frac{s-1}{s}}\left(\int_{[u\geq1]} {F(u(y))^s}dy\right)^{\frac{1}{s}}\\
\leq&C_1\left(	\int_{\mathbb{ R}^N} \dfrac{K(y)^{\frac{s}{s-1}}}{|x-y|^{\frac{\lambda(s-1)}{s}}}dy\right)^{\frac{s-1}{s}}\left(\int_{\mathbb{ R}^N} {\Phi_{*}(u)}dy\right)^{\frac{1}{s}}\\
\leq&C_1\left(\int_{\mathbb{ R}^N} {\Phi_{*}(u)}dy\right)^{\frac{1}{s}}\left(	\int_{|x-y|<1} \dfrac{K(y)^{\frac{s}{s-1}}}{|x-y|^{\frac{\lambda(s-1)}{s}}}dy+	\int_{|x-y|\geq1} \dfrac{K(y)^{\frac{s}{s-1}}}{|x-y|^{\frac{\lambda(s-1)}{s}}}dy\right)^{\frac{s-1}{s}}\\
\leq&C_1 \left(\int_{\mathbb{ R}^N} {\Phi_{*}(u)}dy\right)^{\frac{1}{s}}\left(	\lVert K\lVert_{\infty}^s\int_{|x-y|<1} \dfrac{1}{|x-y|^{\frac{\lambda(s-1)}{s}}}dy+	\int_{|x-y|\geq1} {K(y)^{\frac{s}{s-1}}}dy\right)^{\frac{s-1}{s}}\\
\leq&C_1\left(\int_{\mathbb{ R}^N} {\Phi_{*}(u)}dy\right)^{\frac{1}{s}}\left(	\lVert K\lVert_{\infty}^s\int_{0}^{1} {r^{N-1-\frac{\lambda(s-1)}{s}}}dy+	\lVert K\lVert_{ L^{\frac{s-1}{s}}(\mathbb{ R}^{N})}^{\frac{s-1}{s}}\right)^{\frac{s-1}{s}}\\
\end{align*}}
On the other hand, 
\begin{align*}
\int_{[u\leq1]} \dfrac{K(y)F(u(y))}{|x-y|^\lambda}dy\leq&\lVert F\lVert_{ L^{\infty}([0,1])}\int_{\mathbb{ R}^N} \dfrac{K(y)}{|x-y|^{\lambda}}dy\\
\leq&\lVert F\lVert_{ L^{\infty}([0,1])}\left(\int_{|x-y|<1} \dfrac{K(y)}{|x-y|^{\lambda}}dy+\int_{|x-y|\geq1} \dfrac{K(y)}{|x-y|^{\lambda}}dy\right)\\
\leq&\lVert F\lVert_{ L^{\infty}([0,1])}\left(	\lVert K\lVert_{\infty}\int_{0}^{1} {r^{N-1-\lambda}}dy+	\lVert K\lVert_{ L^{1}(\mathbb{ R}^{N})}\right),
\end{align*}
that is,
\begin{align*}
	\int_{\mathbb{ R}^N} \dfrac{K(y)F(u(y))}{|x-y|^\lambda}dy \leq C_2,
\end{align*}
where
\begin{align*}
	C_2=&\Big\{ C_1\left(\int_{\mathbb{ R}^N} {\Phi_{*}(u)}dy\right)^{\frac{1}{s}}\left(	\lVert K\lVert_{\infty}^s\int_{0}^{1} {r^{N-1-\frac{\lambda(s-1)}{s}}}dy+	\lVert K\lVert_{ L^{\frac{s-1}{s}}(\mathbb{ R}^{N})}^{\frac{s-1}{s}}\right)^{\frac{s-1}{s}},\\
	&\;\quad\quad\quad\lVert F\lVert_{ L^{\infty}([0,1])}\left(	\lVert K\lVert_{\infty}\int_{0}^{1} {r^{N-1-\lambda}}dy+	\lVert K\lVert_{ L^{1}(\mathbb{ R}^{N})}\right)\Big\},
\end{align*}
showing that
\begin{align}\label{5.13}
\int_{\mathbb{ R}^N} \dfrac{K(y)F(u(y))}{|x-y|^\lambda}dy\in L^{\infty}(\mathbb{ R}^N).
\end{align}
Consider the scalar measurable function $Z:\Omega\times \mathbb{R} \times \mathbb{R}^N \longrightarrow \mathbb{R}$ given by $$Z(x,t,p)=V(x)\varphi (|t|)t-\left(\int_{\mathbb {R}^{N}} \dfrac{K(y)F(u(y))}{|x-y|^\lambda}dy\right)K(x)\tilde{f}(t)$$
with
\begin{align*}
\varphi(t)=\left\{\begin{array}{lr}
\;\;\; \;\phi(t)\;\;\;, \;\;\;\;\;\;\text{ for } 0< t\leq M/2\;\;\;\vspace*{0.2cm}\\
\phi(M/2)\;,\text{ for } t\geq M/2
\end{array}\right.,
\end{align*}
and \begin{align*}
\varphi(t)=\left\{\begin{array}{lr}
\;\;\; \;f(t)\;\;\;, \;\;\;\;\;\;\text{ for } 0< t\leq M/2\;\;\;\vspace*{0.2cm}\\
f(M/2)\;,\text{ for } t\geq M/2
\end{array}\right.,
\end{align*}
where $M>0$ is the constant satisfying \eqref{5.12}. By \eqref{5.13}, there exists a constant $C_3$ such that
\begin{align*}
	|B(x,t,p)|\leq C_3
\end{align*}
This fact together with the hypothesis $(\phi_{6})$ allows us to apply the theorem of regularity due to Lieberman [\citen{O.A}, Theorem 1.7]. Thus showing the result

\end{proof}

\begin{corollary}\label{22.2'}
	Let $u\in E$ be a nonnegative solution of $(P)$. Then, $u$ is positive solution.
\end{corollary}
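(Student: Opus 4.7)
The plan is to combine nonnegativity of the right-hand side of $(P)$ with a strong maximum principle adapted to the $\Phi$-Laplacian, and then use connectedness of $\mathbb{R}^N$.

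First I would observe that the Choquard nonlinearity is pointwise nonnegative. Indeed, by $(f_3)$ we have $f(t)\geq 0$ for $t\geq 0$; since $u\geq 0$, this gives $F(u)=\int_0^{u}f(s)\,ds\geq 0$. Combined with $K\geq 0$, the convolution $\int_{\mathbb{R}^N}K(y)F(u(y))|x-y|^{-\lambda}dy$ is nonnegative, and so is the factor $K(x)f(u(x))$. Consequently, interpreting the weak solution $u$ (obtained via Proposition \ref{0.4}) in the distributional sense, one has
\begin{equation*}
-\Delta_\Phi u+V(x)\phi(u)u\;\geq\; 0\qquad\text{in }\mathbb{R}^N.
\end{equation*}

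Next, fix an arbitrary bounded domain $\Omega\Subset\mathbb{R}^N$. Since $V\in C(\mathbb{R}^N,[0,\infty))$ and, by Theorem \ref{Teo1.1}, $u\in C^{1,\alpha}_{loc}(\mathbb{R}^N)\cap L^\infty_{loc}(\mathbb{R}^N)$, the function $c(x):=V(x)\phi(u(x))$ lies in $L^\infty(\Omega)$. The inequality above can be rewritten as
\begin{equation*}
-\Delta_\Phi u\;\geq\; -c(x)\,u\qquad\text{in }\Omega,
\end{equation*}
so $u$ is a nonnegative $C^1$ supersolution of a quasilinear equation driven by the $\Phi$-Laplacian. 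At this point I would invoke a Vázquez--Pucci--Serrin--Zou type strong maximum principle for operators of the form $\mathrm{div}(\phi(|\nabla u|)\nabla u)$. The hypotheses $(\phi_5)$--$(\phi_7)$ are tailored precisely to this: $(\phi_6)$ gives the two-sided power-law behavior $C_1t^{\beta-1}\leq t\phi(t)\leq C_2t^{\beta-1}$ near $0$, which yields the Vázquez integrability condition $\int_{0^{+}}(sH(s))^{-1/\beta}\,ds=+\infty$ (where $H$ is the Legendre-type primitive associated to $\Phi$), and $(\phi_7)$ guarantees uniform ellipticity of the operator on compact sets bounded away from $0$, so the structure conditions of the strong maximum principle are satisfied. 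Applying it, if $u(x_0)=0$ for some $x_0\in\Omega$, then $u\equiv 0$ in the connected component of $\Omega$ containing $x_0$.

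Finally, I conclude by a global connectedness argument. The set $\mathcal{Z}=\{x\in\mathbb{R}^N:u(x)=0\}$ is closed because $u$ is continuous, and by the previous step it is open in $\mathbb{R}^N$. Since $\mathbb{R}^N$ is connected, either $\mathcal{Z}=\emptyset$ or $\mathcal{Z}=\mathbb{R}^N$; the latter contradicts $u\not\equiv 0$, which was established earlier (the Cerami-limit solution satisfies $J(u)=c>0$). Hence $u(x)>0$ for every $x\in\mathbb{R}^N$.

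The main obstacle I anticipate is the rigorous verification that the $\Phi$-Laplacian strong maximum principle applies under the precise hypotheses $(\phi_5)$--$(\phi_7)$; this amounts to checking the Vázquez integrability condition from the power bound in $(\phi_6)$, which is a routine computation once $\beta$ is fixed, and to checking that the zeroth-order term $V(x)\phi(u)u$ behaves like $u^{\beta}$ near a hypothetical zero of $u$, so that the comparison with the Vázquez prototype goes through. Once this is in place, the rest of the argument is essentially topological.
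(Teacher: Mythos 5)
Your argument is correct in outline, but it takes a genuinely different route from the paper. The paper fixes a bounded domain $\Omega$, uses the $C^{1}(\overline{\Omega})$ regularity to bound $|\nabla u|$ by some $M_1$, truncates $\phi$ above $M_1$ so that the resulting operator satisfies \emph{global} power-type structure conditions $\alpha_1|p|^{\beta}\leq \varphi(|p|)|p|^{2}$ and $|\varphi(|p|)p|\leq\alpha_2|p|^{\beta-1}$, checks that the zeroth-order term satisfies $|Z(x,t,p)|\leq C|t|^{\beta-1}$ on $\Omega\times(-M,M)\times\mathbb{R}^N$, and then invokes Trudinger's Harnack inequality (\cite{Trudinger}, Theorem 1.1) to get $u>0$ on $\Omega$ directly; arbitrariness of $\Omega$ finishes. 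You instead invoke a V\'azquez--Pucci--Serrin strong maximum principle for the $\Phi$-Laplacian and then run an open--closed connectedness argument. Both tools apply under $(\phi_5)$--$(\phi_7)$, and your verification of the V\'azquez integrability condition from the power bounds in $(\phi_6)$ is the right computation; the paper's choice of the Harnack inequality has the mild advantage that the truncation trick makes the structure conditions exactly those of \cite{Trudinger} and positivity on all of $\Omega$ comes out in one step, with no separate topological argument. One imprecision you should fix: the claim that $c(x)=V(x)\phi(u(x))\in L^{\infty}(\Omega)$ is false in general, since for $\beta<2$ hypothesis $(\phi_6)$ gives $\phi(t)\sim t^{\beta-2}\to+\infty$ as $t\to 0^{+}$, so $c$ blows up precisely at the hypothetical zeros of $u$ that you are trying to rule out. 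The correct formulation is the one you only sketch at the end: the zeroth-order term is controlled by $C\,u^{\beta-1}$ (not $u^{\beta}$) near a zero, its primitive is of order $u^{\beta}$, and it is this primitive that enters the V\'azquez condition $\int_{0^+}\bigl(H^{-1}(F(s))\bigr)^{-1}ds=+\infty$; stated that way from the outset, your argument closes.
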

\noindent{\bf{Proof:}}  If  $\Omega\subset \mathbb{R}^N$ is a bounded domain, the Lemma \ref{22.21} implies that $u\in C^{1}(\overline{\Omega})$.
Using this fact, in
the sequel, we fix  $M_1>\max\big\{\lVert \nabla u\lVert_{L^{\infty}(\overline{\Omega})}, 1\big\}$ and
\begin{align*}
\varphi(t)=\left\{\begin{array}{lr}
\;\;\; \;\phi(t)\;\;\;, \;\;\;\;\;\;\text{ for } 0< t\leq M_1\;\;\;\vspace*{0.2cm}\\
\dfrac{\phi(M_1)}{M_1^{\beta-2}}t^{\beta-2}\;,\text{ for } t\geq M_1
\end{array}\right.,
\end{align*}
where $\beta$ is given in the hypothesis $(\phi_{5})$. Still by condition $(\phi_5)$, there are $\alpha_1, \alpha_2>0$ satisfying
\begin{align}\label{2.23}
\varphi(|y|)|y|^2=	\phi(|y|)|y|^2\geq\alpha_1|y|^{\beta} \;\;\;\text{ and }\;\;\;|\varphi(|y|)y|\leq \alpha_2 |y|^{\beta-1},\;\;\;\forall y \in\mathbb{R}^N.
\end{align}
Now, consider the vector measurable functions $G:\Omega\times \mathbb{R} \times \mathbb{R}^N \longrightarrow \mathbb{R}^N$ given by $\linebreak G(x,t,p)=\frac{1}{\alpha_1}\varphi (|p|)p$. From \eqref{2.23},
\begin{align}
|G(x,t,p)|\leq\frac{\alpha_2}{\alpha_1}|p|^{\beta-1} \;\;\;\text{ and }\;\;\;pG(x,t,p)\geq |p|^{\beta-1} , \;\;\text{ for all }\; (x,t,p)\in \Omega\times \mathbb{R} \times \mathbb{R}^N.
\end{align}

We next will  consider the scalar measurable function $Z:\Omega\times \mathbb{R} \times \mathbb{R}^N \longrightarrow \mathbb{R}$ given by $$Z(x,t,p)=\frac{1}{\alpha_1}\big(V(x)\phi (|t|)t-\left(\int_{\mathbb {R}^{N}} \dfrac{K(y)F(u(y))}{|x-y|^\lambda}dy\right)K(x)f(t)\big).$$ By $(f_1)$, there will be a constant $C_1>0$ satisfying
\begin{align}
K(x)|f(t)|\leq C_1K(x)a
(|t|)|t|+ C_1\phi_*(|t|)|t|, \;\;\;\forall t\in\mathbb{R}^N\;\text{ and }\;x\in\mathbb{R}^N.
\end{align}

Fix $M\in(0,\infty)$. Through the condition $(\phi_{5})$ and by a simple computation yields there exists $C_2=C_2(M) > 0$ verifying
%
\begin{align*}
|Z(x,t,p)|\leq C_2|t|^{\beta-1},\; \text{ for every }\; (x,t,p)\in \Omega\times (-M,M) \times \mathbb{R}^N  . 
\end{align*}
By the arbitrariness of $M$, we can conclude that functions $G$ and $B$ fulfill the structure required by Trudinger \cite{Trudinger}. Also, as $u$ is a weak solution of $(P)$, we infer that $u$ is a quasilinear problem solution
\begin{align*}
-div\, G(x,u,\nabla u(x))+Z(x,u,\nabla u(x))=0\;\text{ in }\Omega.
\end{align*}
By [\citenum{Trudinger}, Theorem 1.1], we deduce that $u>0$ in $\Omega$. By the arbitrariness of $\Omega$ , we conclude that $u>0$ in $\mathbb{R}^{N}$.

\qed

\subsection{Proof of Theorem \ref{Teo2}} Assuming the assumptions of Theorem \ref{Teo2}, the arguments at the beginning of this Section 4, guarantees the existence of a nonnegative ground state solution for problem $(P)$. 

\qed

\section{Acknowledgments} 
The authors are grateful to the Paraíba State Research Foundation (FAPESQ), Brazil, and the Conselho Nacional de Desenvolvimento Científico e Tecnológico (CNPq), Brazil, whose funds partially supported this paper.

\end{document}